\documentclass[reqno]{amsart}


\usepackage{amsmath}
\usepackage{amsfonts}
\usepackage{amssymb,enumerate}
\usepackage{amsthm}
\usepackage[all]{xy}
\usepackage{rotating}
\usepackage{hyperref}
\usepackage{color}
\setcounter{tocdepth}{1}

\theoremstyle{plain}
\newtheorem{lem}{Lemma}[section]
\newtheorem{cor}[lem]{Corollary}
\newtheorem{prop}[lem]{Proposition}
\newtheorem{thm}[lem]{Theorem}

\theoremstyle{definition}
\newtheorem{defn}[lem]{Definition}
\newtheorem{ex}[lem]{Example}

\newtheorem{disc}[lem]{Remark}

\newtheorem{fact}[lem]{Fact}



\newcommand{\cat}[1]{\mathcal{#1}}

\newcommand{\catd}{\cat{D}}


\newcommand{\pd}{\operatorname{pd}}

\newcommand{\id}{\operatorname{id}}	
\newcommand{\fd}{\operatorname{fd}}


\newcommand{\depth}{\operatorname{depth}}	
	
\newcommand{\amp}{\operatorname{amp}}

\newcommand{\mspec}{\operatorname{m-Spec}}


\newcommand{\HH}{\operatorname{H}}
\newcommand{\Hom}{\operatorname{Hom}}	

\newcommand{\spec}{\operatorname{Spec}}

\newcommand{\shift}{\mathsf{\Sigma}}


\newcommand{\ideal}[1]{\mathfrak{#1}}
\newcommand{\m}{\ideal{m}}

\newcommand{\p}{\ideal{p}}
\newcommand{\q}{\ideal{q}}

\newcommand{\fa}{\ideal{a}}
\newcommand{\fb}{\ideal{b}}



\newcommand{\supp}{\operatorname{supp}}
\newcommand{\Supp}{\operatorname{Supp}}

\newcommand{\VE}{\operatorname{V}}
\newcommand{\cosupp}{\operatorname{co-supp}}


\newcommand{\bbz}{\mathbb{Z}}


\newcommand{\xra}{\xrightarrow}


\newcommand{\vf}{\varphi}


\newcommand{\y}{\mathbf{y}}

\newcommand{\x}{\underline{x}}


\renewcommand{\geq}{\geqslant}
\renewcommand{\leq}{\leqslant}


\newcommand{\Rhom}[3][R]{\mathbf{R}\!\operatorname{Hom}_{#1}(#2,#3)}	
\newcommand{\Lotimes}[3][R]{#2\otimes^{\mathbf{L}}_{#1}#3}
\newcommand{\Otimes}[3][R]{#2\otimes_{#1}#3}
\renewcommand{\Hom}[3][R]{\operatorname{Hom}_{#1}(#2,#3)}	
\newcommand{\Tor}[4][R]{\operatorname{Tor}^{#1}_{#2}(#3,#4)}

\newcommand{\LL}[2]{\mathbf{L}\Lambda^{\ideal{#1}}(#2)}

\newcommand{\RG}[2]{\mathbf{R}\Gamma_{\ideal{#1}}(#2)}

\newcommand{\width}{\operatorname{width}}

\newcommand{\Comp}[2]{\widehat{#1}^{\ideal{#2}}}
\newcommand{\rad}[1]{\operatorname{rad}(#1)}

\newcommand{\catdfb}{\catd_{\text{b}}^{\text{f}}}
\newcommand{\catdb}{\catd_{\text{b}}}
\newcommand{\catdf}{\catd^{\text{f}}}

\numberwithin{equation}{lem}

\begin{document}

\bibliographystyle{amsplain}

\author{Sean Sather-Wagstaff}

\address{Department of Mathematical Sciences,
Clemson University,
O-110 Martin Hall, Box 340975, Clemson, S.C. 29634
USA}

\email{ssather@clemson.edu}

\urladdr{https://ssather.people.clemson.edu/}

\thanks{
Sean Sather-Wagstaff was supported in part by a grant from the NSA}

\author{Richard Wicklein}

\address{Richard Wicklein, Mathematics and Physics Department, MacMurray College, 447 East College Ave., Jacksonville, IL 62650, USA}

\email{richard.wicklein@mac.edu}

\title
{Adic Finiteness: Bounding Homology and Applications}



\keywords{
Adic finiteness; 
amplitude inequality,
co-support,
flat dimension,
Frobenius endomorphism,
injective dimension,
projective dimension,
support}
\subjclass[2010]{
13C12, 
13D05, 
13D07, 
13D09 
}

\begin{abstract}
We prove a versions of amplitude inequalities of Iversen, Foxby and Iyengar, and Frankild and Sather-Wagstaff
that replace finite generation conditions with adic finiteness conditions. 
As an application, we prove that a local ring $R$ of prime characteristic is regular if and only if 
for some proper ideal $\fb$ the derived local cohomology complex $\RG bR$ has finite flat dimension when viewed through
some positive power of the Frobenius endomorphism.
\end{abstract}

\maketitle

\tableofcontents

\section{Introduction} \label{sec130805a}
Throughout this paper let $R$ and $S$ be 
commutative noetherian rings, let $\fa \subsetneq R$ be a proper ideal of $R$, and let $\Comp{R}{a}$ be the $\fa$-adic completion of $R$.
Let $K=K^R(\x)$ denote the Koszul complex over $R$ on a  generating sequence $\x=x_1,\ldots,x_n$ for $\fa$.
We work in the derived category $\catd(R)$ with objects  the $R$-complexes
indexed homologically
$X=\cdots\to X_i\to X_{i-1}\to\cdots$.
The $i$th shift (or suspension) of $X$ is denoted $\shift^iX$.
We  consider the following full triangulated subcategories of $\catd(R)$.

\

$\catdb(R)$: objects are the complexes $X$ with $\HH_i(X)=0$ for $|i|\gg 0$.

$\catdf(R)$: objects are the complexes $X$ with $\HH_i(X)$ finitely generated for all $i$.

$\catdfb(R):=\catdf(R)\bigcap\catdb(R)$.

\

\noindent 
Isomorphisms in $\catd(R)$ are identified by the symbol $\simeq$.
The appropriately derived functors of $\Hom_R$ and $\otimes_R$ are $\mathbf{R}\!\operatorname{Hom}_R$ and $\otimes^{\mathbf{L}}_R$.
See Section~\ref{sec140109b} for background material and, e.g., \cite{hartshorne:rad,verdier:cd,verdier:1}  for foundations.

\

This work is part 3 of a series of papers exploring notions of support and finiteness of $R$-complexes.
It builds on our previous papers~\cite{sather:afcc, sather:scc}, and it is used in the papers~\cite{sather:afc,sather:asc,sather:elclh}.
It is heavily influenced by Foxby and Iyengar's paper~\cite{foxby:daafuc} and the non-local extension in~\cite{frankild:rrhffd}.

A major point of~\cite{foxby:daafuc} is to prove an amplitude inequality extending
a result of Iversen~\cite{iversen:aifc} to the realm of unbounded complexes.
This implies that, given a local ring homomorphism $R\to S$ with complexes $X\in\catdf(R)$ and $F\in\catdfb(S)$
such that $F\not\simeq 0$ and $\fd_R(F)<\infty$, then one has $X\in\catdb(R)$ if and only if $\Lotimes FX\in\catdb(R)$;
that is, one has $\HH_i(X)=0$ for $|i|\gg 0$ if and only if $\Tor iFX=0$ for $|i|\gg 0$. 
This is extended to the non-local arena in~\cite{frankild:rrhffd}.

In the current paper, we extend these and other results 
to the realm of complexes that do not necessarily have finitely generated homology modules,
but instead have finitely generated Koszul homology modules and restricted support. These are the ``adically finite complexes'',
introduced in~\cite{sather:scc}. For instance,  an $R$-module $M$ is $\fa$-adically finite if it is $\fa$-torsion and has
$\HH_i(\Otimes KM)$ finitely generated for all $i$. 
See Definition~\ref{def120925d} for the general definition. 
In this context, our generalization of the results from the previous paragraph is the following, which is a consequence of
Theorem~\ref{thm151105b} below.

\begin{thm}\label{thm151128c}
Let $\vf\colon R\to S$ be a ring homomorphism such that $\fa S\neq S$, and let $\vf^*$ be the induced map $\spec(S)\to\spec(R)$.
Let $F\in\catdb(S)$ be $\fa S$-adically finite such that  $\fd_R(F)<\infty$  
and $\vf^*(\supp_S(F))\supseteq\VE(\fa)\bigcap\mspec(R)$.
Let $X\in\catd(R)$ be such that  $\supp_R(X)\subseteq\VE(\fa)$ and
$\Lotimes{K}X\in\catdf(R)$.
Then one has $X\in\catdb(R)$ if and only if $\Lotimes FX\in\catdb(R)$.
\end{thm}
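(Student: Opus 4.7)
The plan is to reduce to the non-local amplitude inequality of Frankild--Sather-Wagstaff~\cite{frankild:rrhffd} by replacing $X$ with the Koszul approximation $\Lotimes K X$, which lies in $\catdf(R)$ by hypothesis, and then to transfer the conclusion back to $X$ using the support condition $\supp_R(X)\subseteq\VE(\fa)$.

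The forward direction should be routine: since $\fd_R(F)<\infty$, the functor $\Lotimes F -$ sends $\catdb(R)$ to $\catdb(R)$, so $X\in\catdb(R)$ immediately gives $\Lotimes F X\in\catdb(R)$.

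For the converse, suppose $\Lotimes F X\in\catdb(R)$. Because $K$ is a bounded complex of finitely generated free $R$-modules, the functor $\Lotimes K -$ preserves boundedness, so $\Lotimes K(\Lotimes F X)\in\catdb(R)$. By associativity and commutativity of the derived tensor product,
\begin{equation*}
\Lotimes F(\Lotimes K X)\simeq\Lotimes K(\Lotimes F X)\in\catdb(R).
\end{equation*}
Set $Y:=\Lotimes K X$. Then $Y\in\catdf(R)$ by hypothesis, $\supp_R(Y)\subseteq\VE(\fa)$ (the Koszul homology modules are annihilated by a power of $\fa$), and $\Lotimes F Y\in\catdb(R)$. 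The hypotheses $F\in\catdb(S)$, $\fd_R(F)<\infty$, and $\vf^*(\supp_S(F))\supseteq\VE(\fa)\bigcap\mspec(R)\supseteq\supp_R(Y)\bigcap\mspec(R)$ are exactly what is needed to apply the non-local amplitude inequality from~\cite{frankild:rrhffd} to $Y$, yielding $Y=\Lotimes K X\in\catdb(R)$. Finally, I would invoke a result from the earlier papers in the series~\cite{sather:afcc,sather:scc}---that for any $R$-complex $X$ with $\supp_R(X)\subseteq\VE(\fa)$, boundedness of $\Lotimes K X$ forces boundedness of $X$---to conclude $X\in\catdb(R)$.

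The principal obstacle is this final step: converting an amplitude bound for the Koszul-approximated complex back to an amplitude bound for the original complex using only the support condition. This rests on the interplay between Koszul homology and $\fa$-adic torsion/completion developed in the previous papers, and it is the reason the support hypothesis on $X$ cannot be dropped. A secondary piece of bookkeeping is verifying that the surjectivity hypothesis on maximal ideals needed for~\cite{frankild:rrhffd} really does transfer from $X$ to $Y=\Lotimes K X$; here the assumption that $\vf^*(\supp_S(F))$ contains every maximal ideal in $\VE(\fa)$ is precisely tailored so that this application is automatic.
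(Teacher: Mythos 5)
Your overall strategy --- Koszul-ify, quote the non-local amplitude inequality of Frankild--Sather-Wagstaff, then transfer boundedness back using the support hypothesis --- is the paper's strategy, and your forward direction and your final step (boundedness of $\Lotimes KX$ forces boundedness of $X$ when $\supp_R(X)\subseteq\VE(\fa)$; this is Fact~\ref{lem150604a1}\eqref{lem150604a1a}) are both fine. But there is a genuine gap in the middle: you apply the result of~\cite{frankild:rrhffd} with $F$ itself as the test complex, asserting that ``$F\in\catdb(S)$, $\fd_R(F)<\infty$, and the condition on $\vf^*(\supp_S(F))$'' are exactly its hypotheses. They are not. That theorem requires $F\in\catdfb(S)$, i.e., \emph{finitely generated} homology over $S$. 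Here $F$ is only $\fa S$-adically finite, which is strictly weaker (think of $\RG aR$ or the injective hull $E_R(k)$), and relaxing precisely this finiteness hypothesis is the entire content of the statement you are proving --- so quoting~\cite{frankild:rrhffd} directly for $F$ is circular in spirit and unjustified in fact.

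The repair is to run your Koszul trick on the $S$-side as well, which is what the paper does in Theorem~\ref{thm151105b}. Set $K^S:=\Lotimes SK$ and replace $F$ by $\Lotimes[S]{K^S}F$; this lies in $\catdfb(S)$ \emph{because} $F$ is $\fa S$-adically finite (Fact~\ref{thm130612a}). One has
\begin{equation*}
\Lotimes{\bigl(\Lotimes[S]{K^S}F\bigr)}{\bigl(\Lotimes KX\bigr)}\simeq\Lotimes[S]{K^S}{\bigl(\Lotimes K{(\Lotimes FX)}\bigr)},
\end{equation*}
which is bounded whenever $\Lotimes FX$ is, and the support hypothesis transfers since $\supp_S(\Lotimes[S]{K^S}F)=\VE(\fa S)\bigcap\supp_S(F)=\supp_S(F)$ by Fact~\ref{cor130528a} and adic finiteness. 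Now~\cite{frankild:rrhffd} applies to the pair $(\Lotimes[S]{K^S}F,\ \Lotimes KX)$, yields $\Lotimes KX\in\catdb(R)$ via Fact~\ref{lem150604a1}\eqref{lem150604a1a} applied over $S$ and $R$, and your concluding step finishes the argument. (The paper actually proves sharper explicit bounds on $\inf$ and $\sup$ by this route, but for the if-and-only-if statement the qualitative version suffices.)
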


The point of this and most of the other results of this paper is that, in the presence of reasonable support conditions,
one can relax homologically finite assumptions to adically finite assumptions.
Sections~\ref{sec151104a}--\ref{sec151211b} contains numerous results akin to Theorem~\ref{thm151128c}, with various derived functors
and finiteness conditions. It should be reiterated that these results are all applied in
our subsequent work, especially in~\cite{sather:afc}.
We also note that many of the results of Section~\ref{sec151104a} are new even when the adically finite condition is replaced with the more
restrictive assumption of being in $\catdfb(R)$. For instance, the next result is a special case of Theorem~\ref{thm151210e}.

\begin{thm}\label{thm151210ew}
Let $F\in\catdfb(R)$ be  such that  $\fd_R(F)<\infty$.
Let $Z\in\catd(R)$ be such that 
$\supp_R(Z)\subseteq\supp_R(F)$.
One has  $Z\in\catdb(R)$ if and only if $\Lotimes FZ\in\catdb(R)$.
\end{thm}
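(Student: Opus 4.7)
The plan is to prove the two implications separately, with the forward implication being essentially formal and the reverse requiring localization together with an amplitude inequality.

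For $Z \in \catdb(R) \Rightarrow \Lotimes FZ \in \catdb(R)$: since $F \in \catdb(R)$ has $\fd_R(F) < \infty$, one can take a bounded flat resolution $F' \xrightarrow{\simeq} F$, so $\Lotimes FZ \simeq F' \otimes_R Z$ is represented by the tensor product of two bounded complexes, hence is bounded.

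For the converse, assume $\Lotimes FZ \in \catdb(R)$; my strategy is to localize at primes and invoke an amplitude inequality at each stalk. For $\fp \notin \supp_R(Z)$, $Z_\fp \simeq 0$, so there is nothing to check. For $\fp \in \supp_R(Z)$, the hypothesis $\supp_R(Z) \subseteq \supp_R(F)$ forces $\fp \in \supp_R(F)$, so $F_\fp \not\simeq 0$ in $\catd(R_\fp)$; moreover $F_\fp \in \catdfb(R_\fp)$ with $\fd_{R_\fp}(F_\fp) \leq \fd_R(F) < \infty$, and $(\Lotimes FZ)_\fp \simeq \Lotimes[R_\fp]{F_\fp}{Z_\fp} \in \catdb(R_\fp)$. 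The essential local tool is a Foxby--Iyengar-style amplitude inequality: over a local ring $(S, \fm, k)$, for nonzero $G \in \catdfb(S)$ with $\fd_S(G) < \infty$ and any $W \in \catd(S)$,
\begin{align*}
\sup \HH(W) &\leq \sup \HH(\Lotimes[S]{G}{W}) - \inf \HH(G),\\
\inf \HH(W) &\geq \inf \HH(\Lotimes[S]{G}{W}) - \sup \HH(G).
\end{align*}
I would prove this by representing $G$ by a minimal bounded complex $P$ of finitely generated free $S$-modules (possible since $G \in \catdfb(S)$ with finite flat dimension has finite projective dimension over a noetherian local ring), and examining the spectral sequence of the brutal filtration of $P \otimes_S W$; minimality of $P$ ensures that the extreme $E^\infty$-terms $P_{\sup P} \otimes_S \HH_{\sup \HH W}(W)$ and $P_{\inf P} \otimes_S \HH_{\inf \HH W}(W)$ do not vanish whenever the corresponding homology of $W$ is nonzero.

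Combining the local bounds at each $\fp \in \supp_R(Z)$ with the relations $\inf \HH(F_\fp) \geq \inf \HH(F)$, $\sup \HH(F_\fp) \leq \sup \HH(F)$, and $\sup \HH((\Lotimes FZ)_\fp) \leq \sup \HH(\Lotimes FZ)$ (and the analogues for $\inf$) yields uniform bounds on $\sup \HH(Z_\fp)$ and $\inf \HH(Z_\fp)$ that are independent of $\fp$, and hence $Z \in \catdb(R)$. The main obstacle is the case of the local inequality in which $\HH(W)$ is unbounded, since the spectral-sequence argument requires a ``top'' (or ``bottom'') homology term to exist. This is handled by a soft-truncation argument: apply $\Lotimes[S]{G}{-}$ to the triangle $\tau_{>n} W \to W \to \tau_{\leq n} W \to \shift \tau_{>n} W$, invoke the long exact sequence of homology, and extract enough nonvanishing in $\HH(\Lotimes[S]{G}{W})$ to contradict its assumed boundedness whenever $\HH(W)$ fails to be bounded.
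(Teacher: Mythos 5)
Your forward implication and your globalization step are fine: a module vanishes if and only if all of its localizations do, so uniform bounds on $\sup(Z_\fp)$ and $\inf(Z_\fp)$ over all $\fp$ do bound $\HH(Z)$. The gap is the local lemma itself, which is false as stated for arbitrary $W\in\catd(S)$. Take $S=k[[x]]$, $G=(S\xrightarrow{x}S)$ the Koszul complex on $x$ (so $G\in\catdfb(S)$, $\fd_S(G)<\infty$, $\inf(G)=\sup(G)=0$), and $W=S_x$; then $\Lotimes[S]{G}{W}\simeq 0$ while $\sup(W)=0$, so the asserted inequality $\sup(W)\leq\sup(\Lotimes[S]{G}{W})-\inf(G)$ fails. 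Correspondingly, the spectral-sequence argument does not go through: the bottom-corner $E^2$-term is $\coker(\partial^P_{\inf P+1})\otimes_S\HH_{\inf(W)}(W)$, and while $\coker(\partial^P)$ is nonzero by Nakayama (minimality), its tensor product with a non--finitely-generated module such as $S_x$ can vanish; the top-corner $E^2$-term is a kernel $\Ker\bigl(P_{\sup P}\otimes_S\HH_{\sup(W)}(W)\to P_{\sup P-1}\otimes_S\HH_{\sup(W)}(W)\bigr)$, which vanishes already for $W=S$ and $P=(S\xrightarrow{x}S)$. The inequality $\sup(\Lotimes[S]{G}{W})\geq\sup(W)+\inf(G)$ is genuinely deep: for $W\in\catdf(S)$ it is Iversen's amplitude inequality as extended by Foxby and Iyengar, and its proof rests on the New Intersection Theorem, not on a filtration argument. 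In your application $W=Z_\fp$ is not in $\catdf(R_\fp)$, so you would need the variant with the hypothesis $\supp_S(W)\subseteq\supp_S(G)$ (which $Z_\fp$ does satisfy, since small support localizes); but that variant is exactly the theorem you are trying to prove, localized at $\fp$, so without an independent proof of it the argument is circular.

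For comparison, the paper obtains this statement as a special case of Theorem~\ref{thm151210e}: since $F\in\catdfb(R)$, one has $\supp_R(F)=\VE(\fa)$ for some ideal $\fa$ and $F$ is $\fa$-adically finite (Example~\ref{ex160206a}), and Theorem~\ref{thm151210e} is proved by dualizing into a faithfully injective module to convert the Tor statement into the $\rhom$ statements of Lemma~\ref{thm151112aw} and Theorems~\ref{thm151112a} and~\ref{thm151210d}; the non-formal input there is the Foxby--Iyengar depth formula $\depth_{R_\p}(Y_\p)=\depth_{R_\p}(\Rhom[R_\p]{P_\p}{Y_\p})-\width_{R_\p}(P_\p)$ together with Koszul-complex d\'evissage, which is precisely what substitutes for the finite-generation hypothesis your spectral-sequence argument secretly needs. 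Some input of this strength is unavoidable.
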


Section~\ref{sec151126a} applies these results to the study of homological properties of local ring homomorphisms.
For instance, the next result, contained in Theorem~\ref{thm151126b}, is a significant extension of~\cite[Theorem~3.3]{foxby:daafuc}.

\begin{thm}\label{thm151126bz}
Let $(R,\m)$ be a local ring of prime characteristic, and let $\vf\colon R\to R$ be the Frobenius endomorphism. 
For an $R$-complex $X$, let ${}^{\vf^t}\!X$ denote the complex $X$ viewed as an $R$-complex by restriction of scalars along
the $n$-fold composition $\vf^t$.
Then the following conditions are equivalent.
\begin{enumerate}[\rm(i)]
\item\label{thm151126bz1} $R$ is regular.
\item\label{thm151126bz2} $\vf^t$ is flat for each integer $t\geq 1$.
\item\label{thm151126bz3} One has $\fd_R({}^{\vf^t}\RG bR)<\infty$ for some integer $t\geq 1$ and some ideal $\fb\subseteq\m$.
\item\label{thm151126bz4} One has $\fd_R({}^{\vf^t}\!F)<\infty$ for some integer $t\geq 1$ for some $\fb$-adically finite $R$-complex 
$F\not\simeq 0$ with $\fd_R(F)<\infty$ for some ideal $\fb\subseteq\m$.
\end{enumerate}
\end{thm}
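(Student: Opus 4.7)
The plan is to prove the cyclic chain (i) $\Rightarrow$ (ii) $\Rightarrow$ (iii) $\Rightarrow$ (iv) $\Rightarrow$ (i). The equivalence (i) $\iff$ (ii) is Kunz's classical theorem: a local ring of prime characteristic is regular if and only if some (equivalently, every) iterate of its Frobenius endomorphism is flat. So I focus on the remaining three implications.

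For (ii) $\Rightarrow$ (iii), I take any proper ideal $\fb \subseteq \m$ and compute $\RG bR$ via the \v{C}ech complex on a generating sequence of $\fb$; this exhibits it as a bounded complex of flat $R$-modules, so $\fd_R(\RG bR) < \infty$. Since $\vf^t$ is flat, restriction of scalars along $\vf^t$ preserves flatness termwise, hence $\fd_R({}^{\vf^t}\RG bR) < \infty$. For (iii) $\Rightarrow$ (iv), I take $F := \RG bR$: it is $\fb$-adically finite by \cite{sather:scc} (it is $\fb$-torsion with finitely generated Koszul homology), it is nonzero because $\fb \subsetneq R$, and both $\fd_R(F)$ and $\fd_R({}^{\vf^t}F)$ are finite as just noted.

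The heart of the theorem is (iv) $\Rightarrow$ (i). I view $\vf^t$ as a local homomorphism $R \to S := R$ and regard $F$ as an $S$-complex. The hypotheses on $F$ then match those of Theorem~\ref{thm151128c} with $\vf = \vf^t$, provided the support condition $\vf^{t*}(\supp_S F) \supseteq \VE(\fb) \cap \mspec(R)$ is verified. This condition is automatic: the Frobenius induces the identity on $\spec(R)$, the ring $R$ is local with $\fb \subseteq \m$ so $\VE(\fb) \cap \mspec(R) = \{\m\}$, and the unique maximal ideal lies in the support of any bounded nonzero complex over a local ring. I would then apply Theorem~\ref{thm151128c} (or the sharper form underlying Theorem~\ref{thm151105b}) with a suitably chosen test complex --- a natural choice is $X = {}^{\vf^t}R$, or a Frobenius-twisted \v{C}ech complex on $\fb$ --- to conclude that $\fd_R({}^{\vf^t}R) < \infty$. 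Once this finite flat dimension is in hand, Kunz's theorem gives that $R$ is regular.

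The main obstacle is the last step: upgrading the boundedness conclusion of the amplitude inequality to the flat-dimension statement $\fd_R({}^{\vf^t}R) < \infty$. This requires a lower bound on $\fd_R({}^{\vf^t}F)$ in terms of $\fd_R({}^{\vf^t}R)$ and invariants of $F$ --- roughly, a reverse inequality to the standard bound $\fd_R({}^{\vf^t}F) \leq \fd_R({}^{\vf^t}R) + \fd_S(F)$ that remains valid under the $\fb$-adically finite hypothesis rather than the finitely generated hypothesis used in \cite{foxby:daafuc}. Verifying that the machinery of Sections~\ref{sec151104a}--\ref{sec151211b} delivers such a bound for adically finite $F$ is the technical content that Theorem~\ref{thm151126b} is designed to provide, and this is where the proof necessarily invokes the full strength of the general amplitude theorems developed earlier in the paper.
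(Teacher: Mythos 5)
Your treatment of (i)$\iff$(ii), (ii)$\implies$(iii), and (iii)$\implies$(iv) matches the paper's. The genuine gap is in (iv)$\implies$(i): you correctly isolate the needed inequality --- a lower bound for $\fd_R({}^{\vf^t}\!F)$ in terms of $\fd_R({}^{\vf^t}\!R)$ --- but you do not prove it, and the test complexes you propose cannot deliver it. The complex $X={}^{\vf^t}\!R$ fails the hypothesis $\supp_R(X)\subseteq\VE(\fb)$ of Theorem~\ref{thm151105b} (its support is essentially all of $\spec(R)$, not contained in $\VE(\fb)$ unless $R$ is artinian), and a Frobenius-twisted \v Cech complex would not have $\Lotimes KX\in\catdf(R)$ unless $R$ is F-finite, nor would it connect to $\fd_R({}^{\vf^t}\!R)$. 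The paper's proof supplies the missing inequality as Theorem~\ref{thm151126a}: applied to the local homomorphisms $R\xra{\vf^t}R\xra{\id}R$ with $\fa=\fb$ (legitimate because $\rad{\vf^t(\m)R}=\m\supseteq\fb$), it gives $\fd_R({}^{\vf^t}\!R)+\inf(F)-n\leq\fd_R({}^{\vf^t}\!F)<\infty$. The correct test complex, used in the proof of that theorem, is the derived fiber $X=\Lotimes[Q]kR$ (here $Q=R$ maps to $R$ via $\vf^t$ and $k=R/\m$): its homology consists of finitely generated $R$-modules annihilated by $\vf^t(\m)R$, so $X\in\catdf_+(R)$ and $\supp_R(X)\subseteq\VE(\m)\subseteq\VE(\fb)$, and Theorem~\ref{thm151105b}\eqref{thm151105b1} together with tensor cancellation $\Lotimes[Q]kF\simeq\Lotimes XF$ and $\fd_Q(R)=\sup(\Lotimes[Q]kR)$ yields the bound. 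Without this (or an equivalent substitute), your argument stops exactly at the step you call ``the main obstacle.''

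A secondary inaccuracy: once $\fd_R({}^{\vf^t}\!R)<\infty$ is known, Kunz's theorem alone does not finish the proof, since Kunz requires \emph{flatness} of $\vf^t$, not merely finite flat dimension. The passage from finite flat dimension of Frobenius to regularity is a separate result; the paper invokes the proof of~\cite[Theorem~3.3]{foxby:daafuc} for precisely this reduction.
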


Here $\RG bR$ is the derived local cohomology complex with respect to $\fb$.
Further applications are contained in the subsequent~\cite{sather:afc}.

It is worth noting that there are a number of substitutes for homological finiteness (e.g., for finite generation of modules) in the literature.
For instance, Simon~\cite{simon:shpcm} considers (complexes of) $\fa$-adically complete modules.
We observe that if $X\in\catd(R)$ is such that $\cosupp_R(X)\subseteq\VE(\fa)$, then $X$ is isomorphic in $\catd(R)$ to a complex of flat complete $R$-modules,
namely, the complex $\Lambda^\fa(F)\simeq\LL aX\simeq X$ where $F\simeq X$ is a semi-flat resolution; see Fact~\ref{cor130528a}.
However, our $\fa$-adically finite complexes are different from this. Indeed, let $(R,\m,k)$ be a local ring of positive depth.
The injective hull $E:=E_R(k)$ is not $\m$-adically separated, so it is not isomorphic in $\catd(R)$ to a complex of complete $R$-modules. 
On the other hand, $E$ is $\m$-adically finite by~\cite[Proposition~7.8(b)]{sather:scc}. 

Another replacement for homological finiteness is Porta, Shaul, and Yekutieli's notion of cohomological cofiniteness, from~\cite{yekutieli:ccc}.
The difference between this notion and ours is discussed in depth in~\cite[Section~6]{sather:elclh}.

\section{Background}\label{sec140109b}

\subsection*{Derived Categories}
We expand our menagerie of categories from the introduction to include the next full triangulated subcategories of $\catd(R)$.

\

$\catd_+(R)$: objects are the complexes $X$ with $\HH_i(X)=0$ for $i\ll 0$.

$\catd_-(R)$: objects are the complexes $X$ with $\HH_i(X)=0$ for $i\gg 0$. 

\

\noindent Doubly ornamented subcategories are intersections, e.g., $\catdf_+(R):=\catdf(R)\bigcap\catd_+(R)$.

\subsection*{Homological Dimensions}
An $R$-complex $F$ is \emph{semi-flat}\footnote{In the literature, semi-flat complexes are sometimes called ``K-flat'' or ``DG-flat''.} 
if it consists of flat $R$-modules and the functor $\Otimes F-$ respects quasiisomorphisms,
that is, if it respects injective quasiisomorphisms (see~\cite[1.2.F]{avramov:hdouc}).
A \emph{semi-flat resolution} of an $R$-complex $X$ is a quasiisomorphism $F\xra\simeq X$ such that $F$ is semi-flat.
An $R$-complex $X$ has \emph{finite flat dimension} if it has a bounded semi-flat resolution;
specifically, we have
$$\fd_R(X)=\inf\{\sup\{i\mid F_i\neq 0\}\mid\text{$F\xra\simeq X$ is a semi-flat resolution}\}.$$
The  projective and injective versions of these notions are defined similarly. 

For the following items, consult~\cite[Section 1]{avramov:hdouc} or~\cite[Chapters 3 and 5]{avramov:dgha}.
Bounded below  complexes of flat modules are semi-flat, 
bounded below  complexes of projective modules are semi-projective, and
bounded above  complexes of injective modules are semi-injective.
Semi-projective $R$-complexes are semi-flat. 
Every $R$-complex admits a semi-projective (hence, semi-flat) resolution  and a semi-injective resolution.

\subsection*{Derived Local (Co)homology}
The next notions go back to Grothendieck~\cite{hartshorne:lc} and Matlis~\cite{matlis:kcd,matlis:hps};
see also~\cite{lipman:lhcs,lipman:llcd}.
Let $\Lambda^{\fa}$ denote the $\fa$-adic completion functor, and let
$\Gamma_{\fa}$ be the $\fa$-torsion functor, i.e.,
for an $R$-module $M$ we have
$$\Lambda^{\fa}(M)=\Comp Ma
\qquad
\qquad
\qquad
\Gamma_{\fa}(M)=\{ x \in M \mid \fa^{n}x=0 \text{ for } n \gg 0\}.$$ 
A module $M$ is \textit{$\fa$-torsion} if $\Gamma_{\fa}(M)=M$.

The associated left and right derived functors (i.e., \emph{derived local homology and cohomology} functors)
are  $\LL a-$ and $\RG a-$.
Specifically, given an $R$-complex $X\in\catd(R)$ and a semi-flat resolution $F\xra\simeq X$ and a 
semi-injective resolution $X\xra\simeq I$, then we have $\LL aX\simeq\Lambda^{\fa}(F)$ and $\RG aX\simeq\Gamma_{\fa}(I)$.

\subsection*{Support and Co-support}
The following notions are due to Foxby~\cite{foxby:bcfm} and Benson, Iyengar, and Krause~\cite{benson:csc}.

\begin{defn}\label{defn130503a}
Let $X\in\catd(R)$.
The \emph{small and large support} and  \emph{small co-support} of $X$ are
\begin{align*}
\operatorname{supp}_R(X)
&=\{\mathfrak{p} \in \operatorname{Spec}(R)\mid \Lotimes{\kappa(\p)}X\not\simeq 0 \} \\
\operatorname{Supp}_R(X)
&=\{\mathfrak{p} \in \operatorname{Spec}(R)\mid \Lotimes{R_{\p}}X\not\simeq 0 \} \\
\cosupp_{R}(X)
&=\{\mathfrak{p} \in \operatorname{Spec}(R)\mid \Rhom{\kappa(\p)}X\not\simeq 0 \} 
\end{align*}
where $\kappa(\p):=R_\p/\p R_\p$.
We have a notion of $\operatorname{Co-supp}_R(X)$, as well, but do not need it here.
\end{defn}

Much of the following is from~\cite{foxby:bcfm} when $X$ and $Y$ are appropriately bounded
and from~\cite{benson:lcstc,benson:csc} in general. We refer to~\cite{sather:scc} as a matter of convenience.

\begin{fact}\label{cor130528a}
Let $X,Y\in\catd(R)$. Then we have $\supp_R(X)=\emptyset$ if and only if $X\simeq 0$ if and only if $\cosupp_R(X)=\emptyset$,
because of~\cite[Fact~3.4 and Proposition~4.7(a)]{sather:scc}.
Also, by~\cite[Propositions~3.12 and~4.10]{sather:scc} we have 
\begin{align*}
\supp_{R}(\Lotimes{X}{Y}) 
&= \supp_R(X)\bigcap\supp_R(Y)\\
\cosupp_{R}(\Rhom{X}{Y}) 
&= \supp_R(X)\bigcap\cosupp_R(Y).
\end{align*}
In addition, we know that $\supp_R(X)\subseteq\VE(\fa)$ if and only if 
$X\simeq\RG aX$ if and only if each homology module $\HH_i(X)$ is $\fa$-torsion,
by~\cite[Proposition~5.4]{sather:scc}
and~\cite[Corollary~4.32]{yekutieli:hct}.
Similarly, we have
$\cosupp_R(X)\subseteq\VE(\fa)$ if and only if $\LL aX\simeq X$,
by~\cite[Propositions 5.9]{sather:scc}.
\end{fact}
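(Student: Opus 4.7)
The plan is to prove the five assertions in sequence, using the earlier pieces to streamline the later ones: vanishing first, then the tensor and Hom product formulas via adjunction at the residue field, and finally the $\fa$-adic characterizations via the standard computations of $\RGno{a}$ and $\LLno{a}$.

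For the vanishing statements, the direction $X\simeq 0\Rightarrow\supp_R(X)=\emptyset=\cosupp_R(X)$ is immediate. For the converses I would argue prime-by-prime: the key input is that an $R_\p$-complex $Y$ with $\Lotimes[R_\p]{\kappa(\p)}{Y}\simeq 0$ must be quasi-isomorphic to zero (proved by examining a minimal semi-flat model and invoking graded Nakayama), and dually for the derived Hom with the residue field. Localizing at each prime and invoking the local-to-global detection principle $X\simeq 0\Leftrightarrow X_\p\simeq 0$ for every $\p$ then closes the loop.

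For the tensor-product support formula, I would apply the standard associativity isomorphism
\[\Lotimes{\kappa(\p)}{(\Lotimes{X}{Y})}\simeq\Lotimes[\kappa(\p)]{(\Lotimes{\kappa(\p)}{X})}{(\Lotimes{\kappa(\p)}{Y})}\]
and invoke the first step: a derived tensor product of $\kappa(\p)$-complexes is zero iff one of the factors is zero. For the cosupport-Hom formula I would combine the Hom-tensor adjunction $\Rhom{\kappa(\p)}{\Rhom{X}{Y}}\simeq\Rhom{\Lotimes{\kappa(\p)}{X}}{Y}$ with the base-change identification $\Rhom{\Lotimes{\kappa(\p)}{X}}{Y}\simeq\Rhom[\kappa(\p)]{\Lotimes{\kappa(\p)}{X}}{\Rhom{\kappa(\p)}{Y}}$, rewriting the right-hand side as a derived Hom over a field, which vanishes precisely when one of its two arguments is zero.

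For the $\fa$-adic characterizations of support, I would first verify that $\RG aX$, computed by applying $\Gamma_\fa$ to a semi-injective resolution of $X$, has only $\fa$-torsion homology modules; conversely, if each $\HH_i(X)$ is $\fa$-torsion then the natural map $\RG aX\to X$ is a quasi-isomorphism, checked via the stable Koszul or \v{C}ech computation of $\RGno{a}$. The support inclusion $\supp_R(X)\subseteq\VE(\fa)$ is then equivalent to $X\simeq\RG aX$ because $\Lotimes{\kappa(\p)}{\RG aX}\simeq 0$ whenever $\p\not\supseteq\fa$. The dual $\LLno{a}$-characterization of cosupport is handled analogously via derived completion of a semi-flat resolution. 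The main obstacle throughout is the unbounded regime: minimal-resolution arguments and local-to-global detection require the full machinery of semi-flat and semi-injective resolutions as developed by Benson, Iyengar, and Krause and in~\cite{sather:scc} to ensure that the derived residue-field functors still detect vanishing; once that machinery is in hand, the rest amounts to adjunction bookkeeping.
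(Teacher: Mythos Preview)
The paper does not prove this fact; it records it as a collection of citations to~\cite{sather:scc} and~\cite{yekutieli:hct}, so there is no argument in the paper to compare against. Your proposal is really a sketch of how the cited results might be established, and your treatment of the product formulas (base change to $\kappa(\p)$ and K\"unneth over a field) and of the $\fa$-adic characterizations (via the \v{C}ech model of $\RGno{a}$) is essentially the standard route taken in those references.

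There is, however, a genuine error in your vanishing-detection step. You claim that over a local ring $R_\p$, any complex $Y\in\catd(R_\p)$ with $\Lotimes[R_\p]{\kappa(\p)}{Y}\simeq 0$ satisfies $Y\simeq 0$, via ``a minimal semi-flat model and graded Nakayama,'' and dually for $\mathbf{R}\!\operatorname{Hom}$. This is false: if $R_\p$ is a discrete valuation ring with fraction field $K$, then $K\not\simeq 0$ but $\Lotimes[R_\p]{\kappa(\p)}{K}=0$ and $\Rhom[R_\p]{\kappa(\p)}{K}\simeq 0$. Nakayama-type arguments need degreewise finitely generated homology (so that minimal free resolutions exist), which is not assumed here. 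The correct implication $\supp_R(X)=\emptyset\Rightarrow X\simeq 0$ is irreducibly global: vanishing at \emph{every} residue field simultaneously forces $X\simeq 0$, and the proof (Neeman; recast by Benson--Iyengar--Krause) goes through the fact that the $\kappa(\p)$ generate $\catd(R)$ as a localizing subcategory, not through a prime-by-prime reduction to the closed point. Your closing paragraph gestures at ``the full machinery,'' but the specific reduction you outline cannot be repaired, and since the later equivalence $\supp_R(X)\subseteq\VE(\fa)\Leftrightarrow X\simeq\RG aX$ implicitly relies on this detection (to kill the cone of $\RG aX\to X$), the gap propagates.
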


\subsection*{Adic Finiteness}
The next fact and definition from~\cite{sather:scc} take their cues from work of 
Hartshorne~\cite{hartshorne:adc},
Kawasaki~\cite{kawasaki:ccma,kawasaki:ccc}, and
Melkersson~\cite{melkersson:mci}.

\begin{fact}[\protect{\cite[Theorem 1.3]{sather:scc}}]
\label{thm130612a}
For $X\in\catd_{\text b}(R)$, the next conditions are equivalent.
\begin{enumerate}[\rm(i)]
\item\label{cor130612a1}
One has $\Lotimes{K^R(\underline{y})}{X}\in\catdfb(R)$  for some (equivalently for every) finite generating sequence $\underline{y}$ of $\fa$.
\item\label{cor130612a2}
One has  $\Lotimes{X}{R/\mathfrak{a}}\in\catd^{\text{f}}(R)$.
\item\label{cor130612a3}
One has  $\Rhom{R/\mathfrak{a}}{X}\in\catd^{\text{f}}(R)$.
\end{enumerate}
\end{fact}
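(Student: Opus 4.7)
My plan is to route all three conditions through the single statement that $\Lotimes KX$ has finitely generated homology in each degree, using Koszul self-duality $\Rhom KR\simeq\shift^{-n}K$ to convert between derived-Hom and derived-tensor versions, and exploiting the fact that $\underline y$ maps to zero in $R/\fa$ so that $\Lotimes K{R/\fa}$ is the trivial Koszul complex $\bigoplus_{i=0}^{n}\shift^i(R/\fa)^{\binom{n}{i}}$.

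First I would establish independence of the generating sequence in (i): appending an element $w\in\fa$ to $\underline y$ yields the mapping-cone triangle $\Lotimes{K^R(\underline y)}X\xra{w}\Lotimes{K^R(\underline y)}X\to\Lotimes{K^R(\underline y,w)}X\to$ in $\catd(R)$, so finite generation of Koszul homology propagates along such an extension in both directions; any two finite generating sequences share a common refinement via their concatenation. For (i) $\Rightarrow$ (ii), represent $\Lotimes KX\in\catdfb(R)$ by a bounded complex $P$ of finitely generated projectives, so $\Lotimes{R/\fa}{(\Lotimes KX)}\simeq R/\fa\otimes_R P\in\catdf(R)$; by associativity and commutativity of derived tensor this complex is also isomorphic to $\Lotimes X{(\Lotimes K{R/\fa})}\simeq\bigoplus_{i=0}^n\shift^i(\Lotimes X{R/\fa})^{\binom{n}{i}}$, and the $i=0$ summand $\Lotimes X{R/\fa}$ then inherits finite generation, giving (ii). Running the same argument with $\Rhom{R/\fa}{-}$ in place of $\Lotimes{R/\fa}{-}$ and invoking self-duality to identify (i) with $\Rhom KX\in\catdfb(R)$ yields (i) $\Rightarrow$ (iii).

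The main obstacle is the two converse directions. For (ii) $\Rightarrow$ (i) I would induct on the number of generators $n$, with base case $n=0$ trivial. For the inductive step, write $K\simeq\Lotimes{K(y_n)}{K'}$ with $K'=K^R(y_1,\ldots,y_{n-1})$ and use the triangle $\Lotimes{K'}X\xra{y_n}\Lotimes{K'}X\to\Lotimes KX\to$ to reduce finite generation for $K$ to finite generation for $K'$, then apply the inductive hypothesis after establishing the analog of (ii) for $\fa'=(y_1,\ldots,y_{n-1})$. The delicate sub-step is propagating (ii) from $\fa$ back to $\fa'$, since $y_n$ need not be a non-zero-divisor on $R/\fa'$ and $R/\fa$ generally has infinite projective dimension so tensoring with it is lossy; I would handle this by passing to the $\fa$-adic completion $\Comp Ra$, where $\fa\Comp Ra$ lies in the Jacobson radical and a Nakayama-type descent recovers finite generation degree-by-degree from the associated graded for the $\fa$-adic filtration. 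The analogous induction with $\Rhom{R/\fa}{-}$, combined with self-duality, handles (iii) $\Rightarrow$ (i).
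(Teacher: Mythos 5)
The paper does not prove this statement -- it imports it as a Fact from \cite[Theorem~1.3]{sather:scc} -- so I can only assess your argument on its own terms. Your easy directions are essentially sound: the mapping-cone triangle, together with the standard fact that $(\underline y)$ annihilates $\HH_*(\Lotimes{K^R(\underline y)}{X})$ (so the long exact sequence splits into short exact sequences), gives independence of the generating sequence; and the decomposition $\Lotimes{K}{R/\fa}\simeq\bigoplus_{i=0}^n\shift^i(R/\fa)^{\binom{n}{i}}$ plus Koszul self-duality gives (i)$\Rightarrow$(ii) and (i)$\Rightarrow$(iii). One quibble: an object of $\catdfb(R)$ is resolved by a bounded-below, degreewise finite complex of free modules, not a bounded one (that would require finite projective dimension), but bounded-below degreewise finite is all you need there.

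The converse directions, which carry all the content, have a fatal gap: the ``delicate sub-step'' you isolate -- that condition (ii) for $\fa$ implies condition (ii) for $\fa'=(y_1,\dots,y_{n-1})$ -- is simply false, so no completion or Nakayama-type descent can establish it. Take $R=k[[x,y]]$, $\fa=(x,y)$, $\underline y=x,y$, and $X=E:=E_R(k)$. Then (ii) holds for $\fa$ (each $\Tor{i}{E}{R/\fa}$ has finite length; indeed $\Lotimes{K^R(x,y)}{E}\simeq\shift^2k$), yet $\Tor{1}{E}{R/xR}\cong(0:_Ex)\cong E_{R/xR}(k)$ is not finitely generated, so (ii) fails for $\fa'=(x)$ and $\Lotimes{K^R(x)}{E}\notin\catdf(R)$. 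Thus the intermediate vertex of your induction is genuinely infinite even when both your hypothesis and your target conclusion hold, and the induction cannot be repaired in this form. A route that does work: the full subcategory $\catc=\{Y\in\catd(R)\mid\Lotimes XY\in\catdf(R)\}$ is thick (closed under shifts, summands, and cones, since finitely generated modules form a Serre class over a noetherian ring); the base-change isomorphism $\Lotimes XM\simeq\Lotimes[R/\fa]{(\Lotimes{X}{R/\fa})}{M}$ shows that (ii) places every finitely generated $R/\fa$-module $M$ in $\catc$; and $K$ lies in the thick subcategory generated by such modules, because it is a bounded complex whose homology modules are finitely generated and annihilated by $\fa$, hence a finite iterated extension of shifts of finitely generated $R/\fa$-modules via the truncation triangles. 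The same argument applied to $\{Y\mid\Rhom YX\in\catdf(R)\}$, together with $\Rhom KX\simeq\shift^{-n}\Lotimes KX$, gives (iii)$\Rightarrow$(i).
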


\begin{defn}\label{def120925d}
An $R$-complex $X\in\catdb(R)$ is \emph{$\mathfrak{a}$-adically finite} if it satisfies the equivalent conditions of Fact~\ref{thm130612a} and $\operatorname{supp}_R(X) \subseteq \operatorname{V}(\mathfrak{a})$.
\end{defn}

\begin{ex}\label{ex160206a}
Let $X\in\catdb(R)$ be given.
\begin{enumerate}[(a)]
\item \label{ex160206a1}
If $X\in\catdfb(R)$, then we have $\supp_R(X)=\VE(\fb)$ for some ideal $\fb$, and it follows that $X$ is $\fa$-adically finite
whenever $\fa\subseteq\fb$. (The case $\fa=0$ is from~\cite[Proposition~7.8]{sather:scc}, and the general case follows readily.)
\item \label{ex160206a2}
$K$ and $\RG aR$ are $\fa$-adically finite, by~\cite[Fact~3.4 and Theorem~7.10]{sather:scc}.
\item \label{ex160206a3}
The homology modules of $X$ are artinian if and only if there is an ideal $\fa$ of finite colength (i.e., such that $R/\fa$ is artinian)
such that $X$ is $\fa$-adically finite, by~\cite[Proposition~5.11]{sather:afcc}.
\end{enumerate}
\end{ex}

\begin{fact}\label{fact151129a}
Let $X\in\catd(R)$ be given. It is straightforward to show that one has $\supp_R(X)\subseteq\Supp_R(X)$.
A little more work allows one to show that $\supp_R(X)\subseteq\VE(\fa)$ if and only if $\Supp_R(X)\subseteq\VE(\fa)$;
see~\cite[Proposition~3.15]{sather:scc}. 
If $X$ is $\fa$-adically finite, then one has more, by~\cite[Proposition~7.11]{sather:scc}: the set $\supp_R(X)=\Supp_R(X)$
is Zariski-closed in $\spec(R)$.
\end{fact}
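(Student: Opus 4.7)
The three assertions are established in increasing order of difficulty; all three hinge on the isomorphism $\Lotimes{R_\p}{(\Lotimes{R_\p}{X})}\simeq \Lotimes{R_\p}{X}$ and on Fact~\ref{cor130528a}.

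For the containment $\supp_R(X)\subseteq\Supp_R(X)$, I observe that for every prime $\p$ one has a natural isomorphism
\[
\Lotimes{\kappa(\p)}{X}\simeq \Lotimes[R_\p]{\kappa(\p)}{(\Lotimes{R_\p}{X})},
\]
so if the right-hand $R_\p$-complex $\Lotimes{R_\p}{X}$ is zero in $\catd(R_\p)$, then so is the left-hand side, proving the contrapositive of the inclusion.

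For the biconditional $\supp_R(X)\subseteq\VE(\fa)\iff\Supp_R(X)\subseteq\VE(\fa)$, the $(\Leftarrow)$ direction is immediate from the first part. For $(\Rightarrow)$, by Fact~\ref{cor130528a} the hypothesis is equivalent to each $\HH_i(X)$ being $\fa$-torsion. For any $\p\notin\VE(\fa)$ I choose $a\in\fa\smallsetminus\p$; then $a/1$ is a unit in $R_\p$ and kills every element of each $\HH_i(X)_\p$, forcing $\HH_i(X)_\p=0$ for all $i$. Since $R_\p$ is flat over $R$, this yields $\HH_i(\Lotimes{R_\p}{X})=0$ for all $i$, so $\p\notin\Supp_R(X)$.

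Now assume $X$ is $\fa$-adically finite and set $Y:=\Lotimes{K}{X}\in\catdfb(R)$. Using Fact~\ref{cor130528a} and the identity $\supp_R(K)=\VE(\fa)$ (which follows since $\HH_0(K)=R/\fa$ and $\HH(K)$ is $\fa$-torsion), I compute
\[
\supp_R(Y)=\supp_R(K)\cap\supp_R(X)=\VE(\fa)\cap\supp_R(X)=\supp_R(X),
\]
the last equality using the standing hypothesis $\supp_R(X)\subseteq\VE(\fa)$. Since $Y\in\catdfb(R)$ the sets $\supp_R(Y)$ and $\Supp_R(Y)$ agree and both equal the Zariski-closed set $\VE(\ann_R\HH(Y))$, which already gives the asserted closedness of $\supp_R(X)$.

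It remains to show $\Supp_R(X)\subseteq\supp_R(X)$. Fix $\p\in\Supp_R(X)$, so $X_\p:=\Lotimes{R_\p}{X}\not\simeq 0$, and note that part two already forces $\p\in\VE(\fa)$. After base change, $K_\p$ is the Koszul complex over $R_\p$ on a generating sequence of $\fa R_\p$, so $\supp_{R_\p}(K_\p)=\VE(\fa R_\p)$, and since $\supp_{R_\p}(X_\p)\subseteq\VE(\fa R_\p)$, Fact~\ref{cor130528a} gives
\[
\supp_{R_\p}\!\bigl(\Lotimes[R_\p]{K_\p}{X_\p}\bigr)=\VE(\fa R_\p)\cap\supp_{R_\p}(X_\p)=\supp_{R_\p}(X_\p)\neq\emptyset,
\]
where the nonemptiness uses $X_\p\not\simeq 0$ and Fact~\ref{cor130528a}. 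Hence $Y_\p\simeq\Lotimes[R_\p]{K_\p}{X_\p}\not\simeq 0$, that is, $\p\in\Supp_R(Y)=\supp_R(Y)=\supp_R(X)$.

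The main obstacle is the last step of part three: pushing the coincidence of small and large supports from the finitely generated complex $Y$ back to $X$ itself. The trick is to use the Koszul complex to convert $X$ into the homologically finite object $Y$ without disturbing the small support, and then localize in order to detect points of $\Supp_R(X)$ via $Y_\p$; the support-intersection formula in $\catd(R_\p)$ is precisely what makes this detection faithful under the adic-finiteness hypothesis.
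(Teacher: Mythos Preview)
The paper does not supply a proof for this Fact; it only gestures at the first assertion as ``straightforward'' and cites \cite[Proposition~3.15]{sather:scc} and \cite[Proposition~7.11]{sather:scc} for the other two. So there is no in-paper argument to compare against; you have in effect written out what those references presumably contain.

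Your argument is correct. A few small remarks. In part two, your phrasing ``$a/1$ is a unit in $R_\p$ and kills every element of each $\HH_i(X)_\p$'' is slightly loose: being $\fa$-torsion means each element is killed by some \emph{power} of $\fa$, hence by some power of $a$; since $a/1$ is a unit, so are all its powers, and the conclusion follows. In part three, you silently use that for $Y\in\catdfb(R)$ one has $\supp_R(Y)=\Supp_R(Y)$ and that this set is Zariski-closed; this is standard (it reduces via $\Supp_R(Y)=\bigcup_i\Supp_R(\HH_i(Y))$ to the module case and Nakayama), but it is worth flagging since the paper only records the adically-finite version in this very Fact. Likewise, the containment $\supp_{R_\p}(X_\p)\subseteq\VE(\fa R_\p)$ you invoke follows because the $\fa$-torsion hypothesis on $\HH_i(X)$ localizes to an $\fa R_\p$-torsion hypothesis on $\HH_i(X_\p)$, so Fact~\ref{cor130528a} over $R_\p$ applies.

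The overall strategy---transfer the question to the homologically finite complex $Y=\Lotimes KX$ via the support-intersection formula, then localize to recover information about $X$---is exactly the kind of Koszul reduction that the paper and its companion \cite{sather:scc} use throughout, so your approach is very much in the intended spirit.
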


\subsection*{Bookkeeping}
We use some convenient accounting tools, due in this generality to Foxby and Iyengar~\cite{foxby:ibcahtm, foxby:daafuc}. 

\begin{defn}\label{defn151112a}
The
\emph{supremum}, \emph{infimum},  \emph{amplitude}, \emph{$\fa$-depth}, and \emph{$\fa$-width} of an $R$-complex $Z$ are
\begin{align*}
\sup(Z)&=\sup\{ i\in\bbz\mid\HH_i(Z)\neq 0\}\\
\inf(Z)&=\inf\{ i\in\bbz\mid\HH_i(Z)\neq 0\}\\
\amp(Z)&=\sup(Z)-\inf(Z)\\
\depth_{\fa}(Z)&=-\sup(\Rhom{R/\fa}Z)\\
\width_{\fa}(Z)&=\inf(\Lotimes{(R/\fa)}Z)
\end{align*}
with the conventions $\sup\emptyset=-\infty$ and $\inf\emptyset=\infty$.
When $(R,\m)$ is local, one sets $\depth_R(Z):=\depth_{\m}(Z)$ and $\width_R(Z):=\width_{\m}(Z)$.
\end{defn}

In the following fact, items~\eqref{disc151112a1}--\eqref{disc151112a4} are by definition, 
and item~\eqref{disc151112a5} is from~\cite[Theorems~2.4.F and~P]{avramov:hdouc} and~\cite[Lemma~2.1]{foxby:ibcahtm}.

\begin{fact}\label{disc151112a}
Let $Y,Z\in\catd(R)$.
\begin{enumerate}[(a)]
\item\label{disc151112a1}
One has $\sup(Z)<\infty$ if and only if $Z\in\catd_-(R)$.
Also, one has $\inf(Z)>-\infty$ if and only if $Z\in\catd_+(R)$,
and one has $\amp(Z)<\infty$ if and only if $Z\in\catdb(R)$.
\item\label{disc151112a4}
The following conditions are equivalent: \\
(i) $Z\simeq 0$,
(ii) $\sup(Z)=-\infty$,
(iii) $\inf(Z)=\infty$, and
(iv) $\amp(Z)=-\infty$.
\item\label{disc151112a5}
There are inequalities
\begin{gather*}
\inf(Y)+\inf(Z)\leq\inf(\Lotimes YZ)\\
\sup(\Lotimes YZ)\leq\sup(Y)+\fd_R(Z)\\
\inf(Z)-\pd_R(Y)\leq\inf(\Rhom YZ)\\
\sup(\Rhom YZ)\leq\sup(Z)-\inf(Y)
\end{gather*}
\end{enumerate}
\end{fact}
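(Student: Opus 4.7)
The plan is to handle the three parts separately: parts (a) and (b) are tautological once one unpacks the definitions, while part (c) reduces to controlled resolution arguments.

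For part (a), my approach is to invoke Definition~\ref{defn151112a} and the subcategory definitions from the start of Section~\ref{sec140109b}. The quantity $\sup(Z)=\sup\{i\in\bbz\mid\HH_i(Z)\neq 0\}$ is finite precisely when the set on the right is nonempty and bounded above, which is exactly the condition $\HH_i(Z)=0$ for $i\gg 0$ defining $\catd_-(R)$; note the case where the set is empty is excluded here because then $\sup(Z)=-\infty$, not finite. The infimum version is symmetric, and the amplitude equivalence then follows from $\catdb(R)=\catd_-(R)\cap\catd_+(R)$. For part (b), the plan is again to unwind definitions: $Z\simeq 0$ holds in $\catd(R)$ if and only if $\HH_i(Z)=0$ for all $i$, which is equivalent to the set $\{i\mid\HH_i(Z)\neq 0\}$ being empty, which by the stated conventions $\sup\emptyset=-\infty$ and $\inf\emptyset=\infty$ is equivalent to each of $\sup(Z)=-\infty$, $\inf(Z)=\infty$, and $\amp(Z)=-\infty$.

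For part (c), the plan is to prove each inequality by choosing a resolution whose vanishing range directly forces the asserted bound on (co)homology. For $\inf(Y)+\inf(Z)\leq\inf(\Lotimes YZ)$: when either infimum is $-\infty$ the inequality is vacuous, and otherwise one picks semi-flat resolutions $F\res Y$ and $F'\res Z$ concentrated in degrees $\geq\inf(Y)$ and $\geq\inf(Z)$ respectively (available via truncated semi-projective resolutions of bounded-below complexes), so that $(F\otimes_R F')_n=\bigoplus_{i+j=n}F_i\otimes_R F'_j$ vanishes for $n<\inf(Y)+\inf(Z)$, hence so does $\HH_n(\Lotimes YZ)$. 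For $\sup(\Lotimes YZ)\leq\sup(Y)+\fd_R(Z)$: one chooses a semi-flat resolution $F\res Z$ with $F_i=0$ for $i>\fd_R(Z)$, available by definition of flat dimension, computes $\Lotimes YZ\simeq Y\otimes_R F$, and then runs a degree-count on the totalization together with a soft truncation of $Y$ above degree $\sup(Y)$ to control the top homology.

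The Hom inequalities proceed dually. For $\inf(Z)-\pd_R(Y)\leq\inf(\Rhom YZ)$: pick a semi-projective resolution $P\res Y$ with $P_i=0$ for $i>\pd_R(Y)$ and compute $\Rhom YZ\simeq\Hom_R(P,Z)$; the degree shift inherent in the Hom complex then gives the bound. For $\sup(\Rhom YZ)\leq\sup(Z)-\inf(Y)$: use a semi-projective resolution of $Y$ concentrated in degrees $\geq\inf(Y)$, combined with a semi-injective resolution of $Z$ controlled from above by $\sup(Z)$. The main obstacle in each case is the careful selection of resolutions whose degree ranges match the given (co)homological invariants, particularly for one-sided unbounded complexes; once one commits to the correct truncation, the double-complex degree-counting is routine and matches the argument in \cite{avramov:hdouc} and \cite{foxby:ibcahtm} that is cited.
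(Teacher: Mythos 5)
Your proposal is correct and matches the paper's treatment: the paper gives no proof of its own, declaring parts (a) and (b) to hold by definition and citing \cite[Theorems~2.4.F and~P]{avramov:hdouc} and \cite[Lemma~2.1]{foxby:ibcahtm} for part (c), and your resolution/truncation arguments are precisely the standard proofs found in those sources. One trivial quibble on part (a): the empty case need not be excluded, since $\sup(Z)=-\infty<\infty$ and $Z\simeq 0$ lies in $\catd_-(R)$, so the equivalence holds there as well.
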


\subsection*{Koszul Complexes}
We refer to parts of the following as the ``self-dual nature'' of the Koszul complex.

\begin{fact}\label{disc151211a}
Let $\y=y_1,\ldots,y_m\in R$, and consider the Koszul complex $L:=K^R(\y)$.
Then we have isomorphisms in $\catd(R)$
\begin{gather*}
L\simeq\shift^n\Rhom LR\\
\Lotimes LX\simeq\shift^n\Lotimes{\Rhom LR}X\simeq\shift^n\Rhom LX\\
\Rhom{\Rhom LX}Y
\simeq\Lotimes L{\Rhom XY}
\simeq\Rhom X{\Lotimes LY}.
\end{gather*}
See, e.g., \cite[Remark~2.2]{sather:afcc}. 
\end{fact}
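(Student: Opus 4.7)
The plan is to reduce all three displays to a single self-duality statement $L \simeq \shift^m\Rhom{L}{R}$, where $m$ is the length of the sequence $\y$, and then derive the remaining isomorphisms formally from the fact that $L$ is a perfect $R$-complex (a bounded complex of finitely generated free modules) together with standard tensor-hom and adjunction identities in $\catd(R)$. I would establish the self-duality by one of two routes. Since $L$ is a bounded complex of finitely generated free $R$-modules, $\Rhom{L}{R}$ is represented by the ordinary Hom complex $\Hom{L}{R}$. Either one uses the exterior-algebra description $L_i \cong \bigwedge^i R^m$ with the Koszul differential, together with the perfect pairing $\bigwedge^i R^m \otimes_R \bigwedge^{m-i} R^m \to \bigwedge^m R^m \cong R$, to build an isomorphism $L \cong \shift^m\Hom{L}{R}$ of graded $R$-modules (with a sign-tracking check ensuring compatibility with differentials); or one reduces via the tensor decomposition $L \simeq K^R(y_1) \Otimes \cdots \Otimes K^R(y_m)$ to the one-variable case $K^R(y) \simeq \shift\Hom{K^R(y)}{R}$, which is immediate from the description of $K^R(y)$ as $0 \to R \xra{y} R \to 0$ in degrees $1$ and $0$.

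For the second display, the perfectness of $L$ gives the natural evaluation isomorphism $\Lotimes{\Rhom{L}{R}}{X} \simeq \Rhom{L}{X}$ for every $X \in \catd(R)$. Applying $\Lotimes{-}{X}$ to the self-duality then yields $\Lotimes{L}{X} \simeq \shift^m(\Lotimes{\Rhom{L}{R}}{X}) \simeq \shift^m\Rhom{L}{X}$. For the third display, I would systematically move $L$ between tensor and Hom positions using this second display together with ordinary tensor-hom adjunction. Rewriting $\Rhom{L}{X} \simeq \shift^{-m}(\Lotimes{L}{X})$, one computes
$\Rhom{\Rhom{L}{X}}{Y} \simeq \shift^m\Rhom{\Lotimes{L}{X}}{Y} \simeq \shift^m\Rhom{L}{\Rhom{X}{Y}} \simeq \Lotimes{L}{\Rhom{X}{Y}}$,
where the middle step is tensor-hom adjunction and the last is the second display applied with $\Rhom{X}{Y}$ in place of $X$. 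The remaining isomorphism $\Lotimes{L}{\Rhom{X}{Y}} \simeq \Rhom{X}{\Lotimes{L}{Y}}$ is obtained analogously: apply the second display to $\Lotimes{L}{Y}$, pull the shift outside the Hom, and then use adjunction together with the commutativity of $\Lotimes{-}{-}$.

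The only genuine computation is the sign-compatibility in the self-duality isomorphism at the chain level, and this is the principal obstacle. All subsequent steps are formal consequences of the perfectness of $L$ together with tensor-hom adjunction and the shift conventions in $\catd(R)$. Once the one-variable self-duality $K^R(y) \simeq \shift\Hom{K^R(y)}{R}$ is verified (a direct calculation on a two-term complex), the multi-variable case follows by induction on $m$ using that $\Rhom{-}{R}$ commutes with derived tensor products of perfect complexes.
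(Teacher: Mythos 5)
Your argument is correct and is essentially the standard one that the paper delegates to the cited reference \cite[Remark~2.2]{sather:afcc}: establish the self-duality $L\simeq\shift^m\Rhom LR$ (by the exterior-power pairing or by tensoring the one-variable case), and then deduce the remaining isomorphisms formally from tensor-evaluation for the perfect complex $L$ together with Hom-tensor adjunction. Note only that the exponent in the statement should be $m$ (the length of $\y$) rather than $n$, as your write-up implicitly corrects.
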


The following facts are from~\cite[Lemmas~3.1--3.2]{sather:afcc}.

\begin{fact}\label{lem150604a1}
Let  $Z\in\catd(R)$, let $\y=y_1,\ldots,y_m\in\fa$, and set
$L:=K^R(\y)$ and $\fb=(\y)R$.
Assume that $\supp_R(Z)\subseteq\VE(\fa)$, e.g., that each homology module $\HH_i(Z)$ is annihilated by a power of $\fa$.
\begin{enumerate}[\rm(a)]
\item\label{lem150604a1z}
There are (in)equalities
\begin{align*}
\inf(\Lotimes LZ)&\leq m+\inf(Z)&
\sup(\Lotimes LZ)&=m+\sup(Z)\\
\amp(Z)&\leq\amp(\Lotimes LZ)&
\depth_{\fb}(Z)&=-\sup(Z).
\end{align*}
\item\label{lem150604a1a}
For each $*\in\{+,-,\text{b}\}$, one has $\Lotimes LZ\in\catd_*(R)$ if and only if $Z\in\catd_*(R)$.
\end{enumerate}
\end{fact}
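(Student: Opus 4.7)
The plan is induction on $m$, reducing to the base case $m=1$ via the decomposition $K^R(y_1,\ldots,y_m)\simeq\Lotimes{K^R(y_m)}{K^R(y_1,\ldots,y_{m-1})}$. The support hypothesis propagates through the induction: writing $L':=K^R(y_1,\ldots,y_{m-1})$, one has $\supp_R(\Lotimes{L'}{Z})\subseteq\supp_R(Z)\subseteq\VE(\fa)$ by Fact~\ref{cor130528a}.

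In the base case $m=1$, set $L=K^R(y_1)$ and use the Koszul triangle $Z\xrightarrow{y_1}Z\to\Lotimes{L}{Z}\to\shift Z$. Its long exact sequence of homology breaks into short exact sequences
$$0\longrightarrow\HH_i(Z)/y_1\HH_i(Z)\longrightarrow\HH_i(\Lotimes{L}{Z})\longrightarrow(0:_{\HH_{i-1}(Z)}y_1)\longrightarrow 0.$$
The crucial point, and the one place where the support hypothesis does real work, is that $\supp_R(Z)\subseteq\VE(\fa)$ together with $y_1\in\fa$ forces each $\HH_i(Z)$ to be $y_1$-torsion, so the colon submodule $(0:_{\HH_i(Z)}y_1)$ is nonzero whenever $\HH_i(Z)$ is. Setting $s:=\sup(Z)$, both outer groups vanish for $i>s+1$ while the right-hand colon submodule at $i=s+1$ is nonzero, giving $\sup(\Lotimes{L}{Z})=s+1$. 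Setting $t:=\inf(Z)$, both outer groups vanish for $i<t$, and the surjection $\HH_{t+1}(\Lotimes{L}{Z})\twoheadrightarrow(0:_{\HH_t(Z)}y_1)\neq 0$ yields $\inf(\Lotimes{L}{Z})\leq t+1$. The amplitude inequality $\amp(Z)\leq\amp(\Lotimes{L}{Z})$ is immediate.

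For the depth equality I would invoke the Koszul self-duality $\Rhom{L}{Z}\simeq\shift^{-m}(\Lotimes{L}{Z})$ of Fact~\ref{disc151211a}, obtaining $\sup(\Rhom{L}{Z})=\sup(\Lotimes{L}{Z})-m=\sup(Z)$, and then the Foxby--Iyengar identity $\depth_{\fb}(Z)=-\sup(\Rhom{L}{Z})$ (valid for $L$ a Koszul complex on any generating sequence of $\fb$) to conclude.

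Part~(b) is mostly formal. The case $*=-$ is immediate from the $\sup$ equality of~(a). For $*=+$, the easy direction uses $\inf(\Lotimes{L}{Z})\geq\inf(L)+\inf(Z)=\inf(Z)$ from Fact~\ref{disc151112a}(c). The reverse direction reduces to $m=1$: if $\HH_i(\Lotimes{L}{Z})=0$ for all $i<u$, then exactness in the LES at $\HH_{i-1}(Z)\xrightarrow{y_1}\HH_{i-1}(Z)$ forces $y_1$ to act injectively on each $\HH_{i-1}(Z)$ with $i<u$, and since each such module is $y_1$-torsion, each vanishes. The case $*=\mathrm{b}$ follows by intersection. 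The main obstacle to watch for throughout is that each conversion of ``$y_1$-torsion plus $y_1$-injective'' into ``zero'' rests squarely on the support hypothesis; dropping it allows $\Lotimes{L}{Z}$ to collapse much further than $Z$ does, and both (a) and (b) fail.
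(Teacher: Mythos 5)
Your proof is correct. The paper does not actually prove this statement---it is imported as a Fact from \cite[Lemmas~3.1--3.2]{sather:afcc}---but your argument (induction on $m$ via $K^R(\y)\simeq\Lotimes{K^R(y_m)}{K^R(y_1,\ldots,y_{m-1})}$, the short exact sequences $0\to\HH_i(Z)/y_1\HH_i(Z)\to\HH_i(\Lotimes LZ)\to(0:_{\HH_{i-1}(Z)}y_1)\to 0$, and the key point that a nonzero $y_1$-torsion module has nonzero $y_1$-annihilator, with the edge cases $\sup(Z)=\pm\infty$ handled as you indicate) is the standard one used there, and your route to the fourth equality via Koszul self-duality together with the Foxby--Iyengar Koszul-complex characterization of $\depth_{\fb}$ is likewise the intended one.
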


Note that some items in the next result use $L$, while others use $K$. 

\begin{fact}\label{lem150604a2}
Let  $Z\in\catd(R)$, let $\y=y_1,\ldots,y_m\in\fa$, and set
$L:=K^R(\y)$ and $\fb:=(\y)R\subseteq\fa$. 
Assume that $\cosupp_R(Z)\subseteq\VE(\fa)$, e.g., that each homology module $\HH_i(Z)$ is annihilated by a power of $\fa$.
\begin{enumerate}[\rm(a)]
\item\label{lem150604a2z}
There are (in)equalities
\begin{gather*}
\width_\fb(Z)=\inf(Z)=\inf(\Lotimes LZ)\\
\sup(Z)-n\leq\sup(\Lotimes KZ) \\
\amp(Z)-n\leq\amp(\Lotimes KZ).
\end{gather*}
\item\label{lem150604a2b}
For each $*\in\{+,-,\text{b}\}$, one has $\Lotimes KZ\in\catd_*(R)$ if and only if $Z\in\catd_*(R)$.
\end{enumerate}
\end{fact}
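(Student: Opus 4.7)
The plan is to prove part (a) by induction on the length $m$ of the sequence $\y$, using at each step the mapping-cone distinguished triangle
\[
\Lotimes{L_{j-1}}{Z} \xra{y_j} \Lotimes{L_{j-1}}{Z} \to \Lotimes{L_j}{Z} \to \shift\left(\Lotimes{L_{j-1}}{Z}\right)
\]
where $L_j := K^R(y_1,\ldots,y_j)$, so that $L_0 = R$ and $L_m = L$. Part (b) will follow formally from the inequalities of (a) combined with the general bounds of Fact~\ref{disc151112a}.

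For the string of equalities $\width_\fb(Z) = \inf(Z) = \inf(\Lotimes{L}{Z})$, I would prove the two identities separately. The inequality $\inf(\Lotimes{L}{Z}) \geq \inf(Z)$ is immediate from the long exact sequence of the cone triangle by induction on $j$ and requires no hypothesis. Both equalities, however, use the cosupport hypothesis essentially: since $\cosupp_R(Z) \subseteq \VE(\fa) \subseteq \VE(\fb)$, Fact~\ref{cor130528a} gives $\LL{b}{Z} \simeq Z$, so $Z$ is derived $\fb$-complete. A derived Nakayama argument then forces $\width_\fb(Z) = \inf(Z)$, and similarly shows that $y_j$ cannot act surjectively on the bottom-degree homology of $\Lotimes{L_{j-1}}{Z}$; consequently, the cokernel term in the long exact sequence produces a nonzero class in $\HH_{\inf(Z)}(\Lotimes{L_j}{Z})$, preserving the infimum along the induction. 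The main obstacle is articulating the derived Nakayama step cleanly: one must first verify that the bottom homology of $\Lotimes{L_{j-1}}{Z}$ inherits cosupport in $\VE(\fb)$ (using that cosupport passes through Koszul tensor via the self-duality in Fact~\ref{disc151211a} together with the cosupport-Hom formula in Fact~\ref{cor130528a}), and then argue that any surjective action of $y_j \in \fb$ on such a module forces the module to vanish.

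The remaining bounds in (a) concern $K = K^R(\x)$ rather than $L$. Since $\cosupp_R(Z) \subseteq \VE(\fa)$ and $\x$ generates $\fa$, the infimum identity just established applies directly to give $\inf(\Lotimes{K}{Z}) = \inf(Z)$. For the supremum and amplitude bounds, I would exploit the self-duality $\Lotimes{K}{Z} \simeq \shift^n \Rhom{K}{Z}$ from Fact~\ref{disc151211a}: the inequality $\sup(\Rhom{K}{Z}) \leq \sup(Z)$ from Fact~\ref{disc151112a} together with the shift yields $\sup(\Lotimes{K}{Z}) \leq n + \sup(Z)$, while dualizing the infimum analysis via $\Rhom{K}{Z}$ (or equivalently running the cone-triangle induction in the supremum direction under the cosupport hypothesis) gives the lower bound $\sup(Z) - n \leq \sup(\Lotimes{K}{Z})$. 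The amplitude bound then follows from the identity $\amp = \sup - \inf$ together with the infimum identity.

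Finally, part (b) is formal. For each $* \in \{+, -, \text{b}\}$, the implication $Z \in \catd_*(R) \Rightarrow \Lotimes{K}{Z} \in \catd_*(R)$ follows from the general upper bounds of Fact~\ref{disc151112a}, and the converse implication follows from the identities and inequalities established in (a).
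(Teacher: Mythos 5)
First, a remark on the comparison: the paper itself gives no proof of this statement --- it is imported verbatim from \cite[Lemma~3.2]{sather:afcc} --- so your argument has to stand on its own. Your architecture (iterated mapping cones over the generators, a Nakayama-type argument at the bottom of the complex, and the formal deduction of part~\eqref{lem150604a2b} from part~\eqref{lem150604a2z}) is the right one, and the infimum line is essentially correct in outline. One caveat there: the phrase ``the bottom homology of $\Lotimes{L_{j-1}}{Z}$ inherits cosupport in $\VE(\fb)$'' conflates a property of a complex with a property of one of its homology modules. The cosupport of a complex does not control the cosupport of its individual homology modules, so the Nakayama step must be run at the level of complexes --- for instance via the identity $\width_\fb(W)=\inf(\LL bW)$, valid for every $W\in\catd(R)$, which applied to $W=\Lotimes{L_{j-1}}{Z}$ (where $\LL bW\simeq W$ by the self-duality and cosupport computations you indicate) yields $\HH_{\inf(W)}(W)\neq\fb\,\HH_{\inf(W)}(W)$ directly. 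As written, your module-level assertion that a surjective action of $y_j$ forces vanishing is not justified by the facts you cite, though it is fixable.

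The genuine gap is the bound $\sup(Z)-n\leq\sup(\Lotimes KZ)$, which you dispatch by ``dualizing the infimum analysis.'' It does not dualize. In the support setting the degreewise argument works at the top of the complex because every homology module of a torsion complex is itself torsion; in the cosupport setting the individual homology modules --- in particular the top one --- of a derived complete complex need not be derived complete, so there is no degreewise Nakayama available there. Note that if such a degreewise argument did work, it would give the lossless conclusion $\sup(Z)\leq\sup(\Lotimes KZ)$, mirroring the exact equality in Fact~\ref{lem150604a1}\eqref{lem150604a1z}; the loss of $n$ in the statement is a signal that the supremum genuinely can drop and that a different mechanism is required. A correct single-element step runs roughly as follows: if $\sup(\Lotimes{K^R(x)}{W})\leq s-2$ with $s=\sup(W)$, the long exact sequence forces $x$ to act bijectively on $\HH_t(W)$ for every $t\geq s-1$; writing $W''\to W\to W'\to\shift W''$ for the soft truncation triangle with $\HH_t(W'')=\HH_t(W)$ for $t\geq s-1$ and $\HH_t(W')=\HH_t(W)$ for $t\leq s-2$, one gets $\Lotimes{K^R(x)}{W''}\simeq 0$, hence $\mathbf{R}\Gamma_{xR}(W'')\simeq 0$, hence $\mathbf{L}\Lambda^{xR}(W'')\simeq 0$ by the Greenlees--May equivalence, hence $W\simeq\mathbf{L}\Lambda^{xR}(W)\simeq\mathbf{L}\Lambda^{xR}(W')$, whose supremum is at most $\sup(W')+1\leq s-1<s$, a contradiction; this yields $\sup(\Lotimes{K^R(x)}{W})\geq\sup(W)-1$, and iterating over the $n$ generators gives the stated bound. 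Some argument of this shape --- using the torsion functor, a truncation, and the estimate $\sup(\mathbf{L}\Lambda^{xR}(-))\leq\sup(-)+1$, not merely the cone triangle --- is indispensable and is absent from your proposal.
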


\section{Bounding Homology I: Maximal Support}\label{sec151104a}
In this section, we extend results from~\cite{foxby:ibcahtm, foxby:daafuc, frankild:rrhffd}
to the adically finite arena.

\subsection*{Bounding by Ext}

We begin  with two improvements of a result from~\cite{foxby:ibcahtm} to the adically finite realm.
Note that each of these results assumes enough boundedness on $X$ and $Y$ to imply that $\Rhom XY\in\catd_-(R)$,
in contrast with  other results below.
The point of our results is to give specific ranges for non-vanishing homology. For instance, Proposition~\ref{prop151126a}, 
in conjunction with Fact~\ref{disc151112a}\eqref{disc151112a5}, 
implies that
$\HH_i(\Rhom XY)\neq 0$ for some $i\in\{\sup(Y)-\sup(X),\ldots,\sup(Y)-\inf(X)\}$, when $Y\not\simeq 0$.\footnote{The conditions
$Y\in\catd_-(R)$ and $Y\not\simeq 0$ imply that $\sup(Y)\in\bbz$. Note that we do not need to make a similar assumption for $X$,
since the condition $\fa\neq R$ implies that $\supp_R(X)=\VE(\fa)\neq 0$, so we have $X\not\simeq 0$, and thus
$\sup(X),\inf(X)\in\bbz$.}
Also, as far as we know, the non-trivial implication in part Proposition~\ref{prop151126a}~\eqref{prop151126a2} does
not follow just from support considerations; and similarly for other results below.

\begin{prop}\label{prop151126a}
Let $X\in\catdb(R)$ be $\fa$-adically finite, and let $Y\in\catd_-(R)$ be such that $\supp_R(Y)\subseteq\VE(\fa)=\supp_R(X)$.
\begin{enumerate}[\rm(a)]
\item\label{prop151126a1}
Then there is an inequality
\begin{align*}
\sup(Y)-\sup(X)-n\leq\sup(\Rhom XY).
\end{align*}
\item\label{prop151126a2}
One has $Y\simeq 0$ if and only if $\Rhom XY\simeq 0$. 
\end{enumerate}
\end{prop}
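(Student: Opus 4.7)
My plan for (a) is to reduce the adically-finite statement to the classical homologically-finite analogue by replacing $X$ with $X':=\Lotimes KX$. Since $X$ is $\fa$-adically finite, $X'\in\catdfb(R)$; moreover, by Fact~\ref{lem150604a1}\eqref{lem150604a1z}, one has $\sup(X')=n+\sup(X)$ and $\supp_R(X')=\VE(\fa)$.

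The key intermediate step is to establish $\sup(\Rhom XY)=\sup(\Rhom{X'}Y)$. First I would note that $\supp_R(\Rhom XY)\subseteq\VE(\fa)$: since the homology of $X$ is $\fa$-torsion, $X_\p\simeq 0$ for $\p\notin\VE(\fa)$, whence $(\Rhom XY)_\p\simeq 0$ there. Fact~\ref{lem150604a1}\eqref{lem150604a1z} applied to $\Rhom XY$ then yields $\sup(\Lotimes K{\Rhom XY})=n+\sup(\Rhom XY)$. On the other hand, the self-dual identities in Fact~\ref{disc151211a} give
\begin{align*}
\Lotimes K{\Rhom XY}\simeq\Rhom{\Rhom KX}Y\simeq\shift^n\Rhom{X'}Y,
\end{align*}
where the second isomorphism uses $\Rhom KX\simeq\shift^{-n}\Lotimes KX$. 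Matching suprema in the two displays produces the claimed equality.

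At this stage I would invoke the classical analogue of the inequality from~\cite{foxby:ibcahtm}: for $X'\in\catdfb(R)$ and $Y\in\catd_-(R)$ with $\supp_R(Y)\subseteq\supp_R(X')$, one has $\sup(Y)-\sup(X')\leq\sup(\Rhom{X'}Y)$. The support hypothesis holds since $\supp_R(Y)\subseteq\VE(\fa)=\supp_R(X')$. Substituting $\sup(X')=n+\sup(X)$ then produces the inequality in (a). For (b), the forward direction is trivial; conversely, if $\Rhom XY\simeq 0$ then $\sup(\Rhom XY)=-\infty$, and (a) forces $\sup(Y)=-\infty$ (using $\sup(X)\in\bbz$, which holds because $\fa\subsetneq R$ ensures $\supp_R(X)=\VE(\fa)\neq\emptyset$ and hence $X\not\simeq 0$), so $Y\simeq 0$.

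The main obstacle is confirming that the classical bound $\sup(Y)-\sup(X')\leq\sup(\Rhom{X'}Y)$ is available in the precise form we need from~\cite{foxby:ibcahtm}; if it is not, I would prove it by localizing at a prime $\p\in\supp_R(\HH_{\sup(Y)}(Y))\subseteq\supp_R(X')$ and running a derived Nakayama argument in $R_\p$ for $X'_\p\in\catdfb(R_\p)$.
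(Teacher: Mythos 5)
Your overall strategy is the same as the paper's: reduce to $X':=\Lotimes KX\in\catdfb(R)$, invoke the classical bound from~\cite{foxby:ibcahtm}, and transfer the result back using the self-dual nature of $K$ together with the bookkeeping inequalities; part (b) is handled identically. However, your ``key intermediate step'' contains a genuine error. The claim $\supp_R(\Rhom XY)\subseteq\VE(\fa)$ is false in general, and the justification fails because localization does not commute with $\Rhom$ in the first variable unless $X$ has degreewise finitely generated projective resolutions (e.g., $X\in\catdf_+(R)$); for merely $\fa$-adically finite $X$ one cannot conclude $(\Rhom XY)_\p\simeq\Rhom[R_\p]{X_\p}{Y_\p}$. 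Concretely, take $(R,\m,k)$ local and non-artinian, $\fa=\m$, and $X=Y=E_R(k)$, which is $\m$-adically finite with $\supp_R(E_R(k))=\{\m\}=\VE(\m)$. Then $\Rhom{E_R(k)}{E_R(k)}\simeq\Comp Rm$ is faithfully flat over $R$, so $\supp_R(\Rhom XY)=\spec(R)\not\subseteq\VE(\m)$, even though $X_\p\simeq 0$ for all $\p\neq\m$. Consequently you cannot apply Fact~\ref{lem150604a1}\eqref{lem150604a1z} to $\Rhom XY$, and the asserted equality $\sup(\Rhom XY)=\sup(\Rhom{X'}Y)$ is unjustified (and I would not expect it to hold in general).

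Fortunately the error is stronger than what you need, so the argument is easily repaired. Only the inequality
\begin{equation*}
\sup(\Rhom{X'}Y)=\sup(\Lotimes K{\Rhom XY})-n\leq\sup(\Rhom XY)
\end{equation*}
is required to pass from the classical bound $\sup(Y)-\sup(X')\leq\sup(\Rhom{X'}Y)$ to the desired conclusion, and this direction follows from Fact~\ref{disc151112a}\eqref{disc151112a5} (via $\fd_R(K)=n$, or equivalently via adjunction and $\inf(K)=0$) with no support hypothesis on $\Rhom XY$ whatsoever. This is precisely how the paper argues. With the unjustified equality replaced by this one-sided estimate, your proof of (a) is correct, and (b) follows as you state.
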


\begin{proof}
\eqref{prop151126a1}
Claim: If $X\in\catdfb(R)$, then
$\sup(Y)-\sup(X)\leq\sup(\Rhom XY)$.
Indeed, the condition $\supp_R(Y)\subseteq\VE(\fa)$ implies that $\HH_i(Y)$ is $\fa$-torsion for all $i$,
so
$$\Supp_R(\HH_i(Y))\subseteq\VE(\fa)=\supp_R(X)=\Supp_R(X)$$
by Fact~\ref{fact151129a}.
Thus, the desired inequality is  from~\cite[Proposition~2.2]{foxby:ibcahtm}.\footnote{Note that complexes in~\cite{foxby:ibcahtm}
are indexed cohomologically, so one has to translate~\cite[Proposition~2.2]{foxby:ibcahtm} carefully.}

Now we prove the inequality $\sup(Y)-\sup(X)-n\leq\sup(\Rhom XY)$ in general.
By assumption, we have $\Lotimes KX\in\catdfb(R)$, so the above claim explains the second step in the next display.
\begin{align*}
\sup(Y)-\sup(X)-n
&=\sup(Y)-\sup(\Lotimes KX) \\
&\leq\sup(\Rhom{\Lotimes KX}{Y})\\
&=\sup(\Rhom{K}{\Rhom XY})\\
&\leq\sup(\Rhom XY)-\inf K \\
&=\sup(\Rhom XY)
\end{align*}
The first step is from Fact~\ref{lem150604a1}\eqref{lem150604a1z} because $\supp_R(X)\subseteq\VE(\fa)$.
The third step is by  adjointness 
$\Rhom{\Lotimes KX}{Y}\simeq\Rhom{K}{\Rhom XY}$,
and the fourth step is from Fact~\ref{disc151112a}\eqref{disc151112a5}.
The last step is due to the equality $\inf(K)=0$.

\eqref{prop151126a2}
The forward implication is standard. For the converse,
assume that we have $\Rhom XY\simeq 0$. It follows that 
$$\sup(Y)-\sup(X)-n\leq\sup(\Rhom XY)=-\infty.$$
Our assumptions guarantee that $\sup(X)\in\bbz$, so we conclude that $\sup(Y)=-\infty$,
thus $Y\simeq 0$.
\end{proof}

\begin{prop}\label{prop151126az}
Let $X\in\catdb(R)$ be $\fa$-adically finite, and let $Y\in\catd_-(R)$ be such that $\cosupp_R(Y)\subseteq\VE(\fa)=\supp_R(X)$.
Then there is an inequality
\begin{align*}
\sup(Y)-\sup(X)-3n\leq\sup(\Rhom XY).
\end{align*}
\end{prop}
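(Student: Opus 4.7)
The plan is to reduce to Proposition~\ref{prop151126a}\eqref{prop151126a1} by replacing $Y$ with $\Lotimes{K}{Y}$. This trick converts the cosupport hypothesis on $Y$ into a support hypothesis on $\Lotimes{K}{Y}$, at the cost of a controlled error of size $n$ in three distinct places.

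First, I would verify that $\Lotimes{K}{Y}$ satisfies the hypotheses of the previous proposition with $Y$ replaced by $\Lotimes{K}{Y}$. The containment $\supp_R(\Lotimes{K}{Y}) \subseteq \supp_R(K) = \VE(\fa) = \supp_R(X)$ is immediate from Fact~\ref{cor130528a}, and membership in $\catd_-(R)$ follows from Fact~\ref{lem150604a2}\eqref{lem150604a2b} using the cosupport hypothesis on $Y$. Applying Proposition~\ref{prop151126a}\eqref{prop151126a1} then yields
$$\sup(\Lotimes{K}{Y}) - \sup(X) - n \leq \sup(\Rhom{X}{\Lotimes{K}{Y}}).$$

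Second, I would rewrite both sides of this inequality to feature $\sup(Y)$ and $\sup(\Rhom{X}{Y})$ respectively. On the right, the self-dual nature of the Koszul complex (Fact~\ref{disc151211a}) gives $\Rhom{X}{\Lotimes{K}{Y}} \simeq \Lotimes{K}{\Rhom{X}{Y}}$, and then Fact~\ref{disc151112a}\eqref{disc151112a5} together with $\fd_R(K) = n$ supplies
$$\sup(\Rhom{X}{\Lotimes{K}{Y}}) \leq \sup(\Rhom{X}{Y}) + n.$$
On the left, Fact~\ref{lem150604a2}\eqref{lem150604a2z} (again using the cosupport hypothesis on $Y$) gives $\sup(Y) - n \leq \sup(\Lotimes{K}{Y})$. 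Chaining all three inequalities then produces the desired bound $\sup(Y) - \sup(X) - 3n \leq \sup(\Rhom{X}{Y})$.

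The main obstacle is really just spotting this reduction: once one recognizes that smashing $Y$ with $K$ trades the cosupport condition for a support condition amenable to Proposition~\ref{prop151126a}, the rest is a bookkeeping exercise, and the $3n$ in the bound emerges naturally as the sum of the three controlled losses of $n$ (one from the previous proposition itself, one from recovering $\sup(Y)$ from $\sup(\Lotimes{K}{Y})$, and one from commuting $K$ past $\Rhom{X}{-}$).
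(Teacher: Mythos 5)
Your proposal is correct and follows essentially the same route as the paper: both apply Proposition~\ref{prop151126a}\eqref{prop151126a1} to $\Lotimes KY$ (whose support lies in $\VE(\fa)$), use the Koszul self-duality isomorphism $\Rhom X{\Lotimes KY}\simeq\Lotimes K{\Rhom XY}$ together with $\fd_R(K)=n$ to pass back to $\sup(\Rhom XY)$, and use Fact~\ref{lem150604a2}\eqref{lem150604a2z} to recover $\sup(Y)$, accumulating the three losses of $n$. The only difference is presentational (the paper writes the chain starting from $\sup(\Rhom XY)$ and descending), so there is nothing to change.
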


\begin{proof}
The desired inequality follows from the next sequence.
\begin{align*}
\sup(\Rhom XY)
&\geq\sup(\Lotimes K{\Rhom XY})-n \\
&\geq\sup(\Rhom X{\Lotimes KY})-n \\
&\geq\sup(\Lotimes KY)-\sup(X)-2n \\
&\geq\sup(Y)-\sup(X)-3n
\end{align*}
The first inequality is from Fact~\ref{disc151112a}\eqref{disc151112a5},
and the second inequality is from  tensor-evaluation~\ref{disc151211a}.
Fact~\ref{cor130528a} implies that $\supp_R(\Lotimes KY)\subseteq\supp_R(K)\subseteq\VE(\fa)$, so the third inequality is from Proposition~\ref{prop151126a}\eqref{prop151126a1}.
The fourth inequality is from Fact~\ref{lem150604a2}\eqref{lem150604a2z}.
\end{proof}

Our next results are more along the lines of Theorem~\ref{thm151128c} from
the introduction: in the presence of support and (adic) finiteness assumptions,
boundedness or triviality of $\Rhom PY$ implies the same for $Y$. 
See Lemma~\ref{thm151210cw} for an improvement on the following.

\begin{lem}\label{thm151112aw}
Let $P\in\catdfb(R)$ be such that $P\not\simeq 0$ and $\pd_R(P)<\infty$.
Let $Y\in\catd(R)$ be such that 
$\supp_R(Y)\subseteq\supp_R(P)$.
\begin{enumerate}[\rm(a)]
\item\label{thm151112aw1}
One has $\sup(Y)-\sup(P)\leq\sup(\Rhom PY)$.
\item\label{thm151112aw2}
One has $Y\in\catd_-(R)$ if and only if $\Rhom PY\in\catd_-(R)$.
\item\label{thm151112aw3}
One has $Y\simeq 0$ if and only if $\Rhom PY\simeq 0$.
\end{enumerate}
\end{lem}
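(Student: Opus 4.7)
The plan is to derive all three parts from a single application of the Foxby--Iyengar amplitude inequality \cite[Proposition~2.2]{foxby:ibcahtm}, combined with the standard bounds from Fact~\ref{disc151112a}\eqref{disc151112a5}.

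For part~\eqref{thm151112aw1}, I will first check that the Foxby--Iyengar support hypothesis is met. Because $P\in\catdfb(R)$, Example~\ref{ex160206a}\eqref{ex160206a1} together with Fact~\ref{fact151129a} gives $\supp_R(P)=\Supp_R(P)=\VE(\fb)$ for some ideal $\fb$. The assumption $\supp_R(Y)\subseteq\supp_R(P)=\VE(\fb)$ and Fact~\ref{fact151129a} then yield $\Supp_R(Y)\subseteq\VE(\fb)=\Supp_R(P)$, and since $\Supp_R(\HH_i(Y))\subseteq\Supp_R(Y)$ for each $i$, the hypothesis of \cite[Proposition~2.2]{foxby:ibcahtm} is satisfied. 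Invoking that result yields $\sup(Y)-\sup(P)\leq\sup(\Rhom PY)$, interpreted in $\bbz\cup\{\pm\infty\}$.

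Parts~\eqref{thm151112aw2} and~\eqref{thm151112aw3} are then immediate from~\eqref{thm151112aw1}, using that $P\in\catdb(R)$ and $P\not\simeq 0$ ensure $\inf(P),\sup(P)\in\bbz$. For~\eqref{thm151112aw2}, the forward implication is the bound $\sup(\Rhom PY)\leq\sup(Y)-\inf(P)<\infty$ from Fact~\ref{disc151112a}\eqref{disc151112a5}; the reverse uses~\eqref{thm151112aw1} to pass $\sup(\Rhom PY)<\infty$ to $\sup(Y)<\infty$. For~\eqref{thm151112aw3}, the forward direction is trivial, and the reverse follows because $\sup(\Rhom PY)=-\infty$ combined with~\eqref{thm151112aw1} and $\sup(P)\in\bbz$ forces $\sup(Y)=-\infty$, hence $Y\simeq 0$ by Fact~\ref{cor130528a}.

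The one delicate point is confirming that \cite[Proposition~2.2]{foxby:ibcahtm} applies without a boundedness restriction on $Y$; this is precisely the content of the Foxby--Iyengar extension of Iversen's amplitude inequality to unbounded complexes, and parallels the usage of the same reference in the Claim embedded in the proof of Proposition~\ref{prop151126a}\eqref{prop151126a1}. If a backup were needed for the case $\sup(Y)=\infty$, one would argue by contradiction in~\eqref{thm151112aw2}: assuming $\sup(\Rhom PY)=N<\infty$, apply~\eqref{thm151112aw1} in the already-known case $Y\in\catd_-(R)$ to the soft truncation $Z_m=\tau_{\leq m}Y$, and exploit the iso $\HH_j(\Rhom PY)\cong\HH_j(\Rhom P{Z_m})$ for $j\leq m-\pd_R(P)$ coming from $\inf(\Rhom P{\tau_{>m}Y})\geq m+1-\pd_R(P)$ via Fact~\ref{disc151112a}\eqref{disc151112a5}.
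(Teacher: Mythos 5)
Your reduction of parts~\eqref{thm151112aw2} and~\eqref{thm151112aw3} to part~\eqref{thm151112aw1}, and your verification of the support hypothesis $\Supp_R(\HH_i(Y))\subseteq\Supp_R(P)$, are both fine. The gap sits exactly at the point you flag as delicate: \cite[Proposition~2.2]{foxby:ibcahtm} carries a boundedness hypothesis on the second argument, and this paper only ever invokes it when $Y\in\catd_-(R)$ (that is why the Claim inside the proof of Proposition~\ref{prop151126a} is usable there --- that proposition assumes $Y\in\catd_-(R)$). The extension to unbounded complexes is \emph{not} contained in the 1977 reference; it is precisely what \cite{foxby:daafuc} supplies, and the paper's proof of this lemma runs through that machinery: it writes $-\sup(Y)=\depth_{\fa}(Y)$ using Fact~\ref{lem150604a1}\eqref{lem150604a1z} (valid because $\supp_R(Y)\subseteq\supp_R(P)=\VE(\fa)$), then applies the formulas $\depth_{\fa}(Y)=\inf\{\depth_{R_\p}(Y_\p)\mid\p\in\VE(\fa)\}$ and $\depth_{R_{\p}}(Y_{\p})=\depth_{R_{\p}}(\Rhom[R_{\p}]{P_{\p}}{Y_{\p}})-\width_{R_{\p}}(P_{\p})$ from \cite[Propositions~2.10 and~4.6]{foxby:daafuc}, which hold for unbounded $Y$, and finishes with $\width_{R_\p}(P_\p)=\inf(P_\p)\leq\sup(P)$ and $\depth_{R_\p}(\Rhom PY_\p)\geq-\sup(\Rhom PY)$.

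Your backup truncation argument does not close the gap. Applying the bounded case of~\eqref{thm151112aw1} to $Z_m=\tau_{\leq m}Y$ with $\HH_m(Y)\neq 0$ only guarantees $\HH_{j_0}(\Rhom P{Z_m})\neq 0$ for \emph{some} $j_0$ with $m-\sup(P)\leq j_0\leq m-\inf(P)$, whereas the comparison $\HH_j(\Rhom PY)\cong\HH_j(\Rhom P{Z_m})$ coming from the triangle $\Rhom P{\tau_{>m}Y}\to\Rhom PY\to\Rhom P{Z_m}$ is only available for $j\leq m-\pd_R(P)$. Since $\pd_R(P)\geq\sup(P)$, the guaranteed nonvanishing degree $j_0$ need never lie in the isomorphism range: nothing rules out that all homology of $\Rhom P{Z_m}$ above your bound $N$ sits in the window $(m-\pd_R(P),\,m-\inf(P)]$ where the long exact sequence gives no control. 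So the contradiction does not materialize. To repair the proof, follow the depth/width route above rather than truncation.
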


\begin{proof}
Note first that the conditions $0\not\simeq P\in\catdfb(R)$ imply that $\supp_R(P)=\VE(\fa)$ for some $\fa\neq R$.

\eqref{thm151112aw1}
Because of the assumption $\supp_R(Y)\subseteq\VE(\fa)$, the first step in the next sequence is from
Fact~\ref{lem150604a1}\eqref{lem150604a1z}.
\begin{align*}
-\sup(Y)
&=\depth_{\fa}(Y) \\
&=\inf\{\depth_{R_{\p}}(Y_{\p})\mid\p\in\VE(\fa)\}\\
&=\inf\{\depth_{R_{\p}}(\Rhom[R_{\p}]{P_{\p}}{Y_{\p}})-\width_{R_{\p}}(P_{\p})\mid\p\in\VE(\fa)\}\\
&=\inf\{\depth_{R_{\p}}(\Rhom{P}{Y}_{\p})-\inf(P_{\p})\mid\p\in\VE(\fa)\}\\
&\geq-\sup(\Rhom PY)-\sup(P).
\end{align*}
The second and third steps are from~\cite[Propositions 2.10 and 4.6]{foxby:daafuc};
note that the third step uses  the assumption $\supp_R(P)=\VE(\fa)$.
For the fourth step, we use the conditions $P \in\catdfb(R)$ and $\pd_R(P)<\infty$
to conclude that $\Rhom[R_{\p}]{P_{\p}}{Y_{\p}}\simeq\Rhom{P}{Y}_{\p}$, e.g., by~\cite[Proposition~2.1(iii)]{christensen:apac};
for the equality $\width_{R_{\p}}(P_{\p})=\inf(P_{\p})$, see, e.g., \cite[(1.2.1)]{christensen:rhdacm}.
To explain the last step in this display, we have the following, 
by definition and Fact~\ref{disc151112a}\eqref{disc151112a5}
\begin{align*}
\depth_{R_{\p}}(\Rhom{P}{Y}_{\p})
&=-\sup(\Rhom[R_{\p}]{\kappa(\p)}{\Rhom{P}{Y}_{\p}})\\
&\geq-\sup(\Rhom{P}{Y}_{\p})\\
&\geq-\sup(\Rhom PY)
\end{align*}
with the next sequence
$$\inf(P_{\p})\leq\sup(P_{\p})\leq\sup(P)$$
where the assumption $\supp_R(P)=\VE(\fa)$ is used.

\eqref{thm151112aw2}--\eqref{thm151112aw3}
Our assumptions on $P$ imply that  $\inf(P),\sup(P)\in\bbz$.
Thus, the desired conclusions follow directly from part~\eqref{thm151112a1}
because of Fact~\ref{disc151112a}\eqref{disc151112a1}.\footnote{The following alternate proof of part~\eqref{thm151112aw3} is worth noting.
By Hom-evaluation~\cite[Lemma~4.4(I)]{avramov:hdouc}, the assumptions on $P$ provide an isomorphism
$\Rhom PY\simeq\Lotimes{P^*}{Y}$ where $P^*=\Rhom PR$ satisfies $\supp_R(P^*)=\supp_R(P)$, as in the proof of Corollary~\ref{cor151114b}
below.
Now apply Fact~\ref{cor130528a}.}
\end{proof}

See Theorem~\ref{thm151210c} for an improvement on our next result.
It should be noted that the condition $\supp_R(Y)\subseteq\VE(\fa)$ is a bit strange to us:
given the co-support formula in Fact~\ref{cor130528a}, it would seem more natural to assume 
$\cosupp_R(Y)\subseteq\VE(\fa)$; see, however, Theorem~\ref{thm151210a}.

\begin{thm}\label{thm151112a}
Let $P\in\catdb(R)$ be $\fa$-adically finite such that  $\pd_R(P)<\infty$ and $\supp_R(P)=\VE(\fa)$.
Let $Y\in\catd(R)$ be such that 
$\supp_R(Y)\subseteq\VE(\fa)$.
\begin{enumerate}[\rm(a)]
\item\label{thm151112a1}
One has $\sup(Y)-\sup(P)-n\leq\sup(\Rhom PY)$.
\item\label{thm151112a2}
One has $Y\in\catd_-(R)$ if and only if $\Rhom PY\in\catd_-(R)$.
\item\label{thm151112a3}
One has $Y\simeq 0$ if and only if $\Rhom PY\simeq 0$.
\end{enumerate}
\end{thm}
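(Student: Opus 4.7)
The plan is to reduce Theorem~\ref{thm151112a} to its finitely generated counterpart Lemma~\ref{thm151112aw} by replacing $P$ with the Koszul-smash $\Lotimes KP$, where $K = K^R(\x)$ is the Koszul complex on the chosen generating sequence of $\fa$. The definition of $\fa$-adic finiteness is tailored precisely for this move, via Fact~\ref{thm130612a}\eqref{cor130612a1} it guarantees $\Lotimes KP \in \catdfb(R)$, so Lemma~\ref{thm151112aw} applies to $\Lotimes KP$, and the conclusions for $P$ itself can be recovered by Koszul self-duality together with Fact~\ref{disc151112a}\eqref{disc151112a5}.

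Concretely, I would first verify the hypotheses of Lemma~\ref{thm151112aw} for $\Lotimes KP$: finite projective dimension follows from $\pd_R K \leq n$ together with $\pd_R P < \infty$; and Fact~\ref{cor130528a} combined with $\supp_R K = \VE(\fa)$ gives
$$\supp_R(\Lotimes KP) = \supp_R K \cap \supp_R P = \VE(\fa),$$
which is nonempty (so $\Lotimes KP \not\simeq 0$) and contains $\supp_R Y$ by hypothesis. For (a), Lemma~\ref{thm151112aw}\eqref{thm151112aw1} applied to the pair $(\Lotimes KP, Y)$ yields $\sup(Y) - \sup(\Lotimes KP) \leq \sup(\Rhom{\Lotimes KP}{Y})$. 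Fact~\ref{lem150604a1}\eqref{lem150604a1z}, available since $\supp_R P \subseteq \VE(\fa)$, evaluates the left-hand middle term as $n + \sup(P)$; and standard adjointness plus Fact~\ref{disc151112a}\eqref{disc151112a5} give
$$\sup(\Rhom{\Lotimes KP}{Y}) = \sup(\Rhom K{\Rhom PY}) \leq \sup(\Rhom PY) - \inf K = \sup(\Rhom PY).$$
Chaining these produces the desired inequality $\sup(Y) - \sup(P) - n \leq \sup(\Rhom PY)$.

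Parts (b) and (c) then fall out formally. Adic finiteness of $P$ together with $\supp_R P = \VE(\fa) \neq \emptyset$ forces $\inf(P), \sup(P) \in \bbz$. The forward direction of (b) is immediate from the general estimate $\sup(\Rhom PY) \leq \sup(Y) - \inf(P)$ in Fact~\ref{disc151112a}\eqref{disc151112a5}; the converse in (b) and both directions of (c) follow by substituting the hypothesis $\sup(\Rhom PY) < \infty$, respectively $\sup(\Rhom PY) = -\infty$, into the inequality of (a). I do not anticipate any genuine obstacle beyond bookkeeping: the only delicate point is keeping the Koszul self-duality's shift accounting consistent so that the $-n$ correction in~(a) is not inflated to $-2n$ or $-3n$ (as happened in Proposition~\ref{prop151126az} under the dual co-support hypothesis). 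The essential mathematical content is already captured by Lemma~\ref{thm151112aw} combined with the defining property of adic finiteness.
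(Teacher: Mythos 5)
Your proposal is correct and follows essentially the same route as the paper: both proofs apply Lemma~\ref{thm151112aw} to the complex $\Lotimes KP\in\catdfb(R)$ and then transfer the estimate back to $P$, with the paper using tensor-evaluation against $K^*=\shift^{-n}K$ where you use Hom-tensor adjointness $\Rhom{\Lotimes KP}{Y}\simeq\Rhom{K}{\Rhom PY}$ together with $\inf(K)=0$ — an equivalent bookkeeping via the self-duality of $K$. Your shift accounting is consistent and yields the same $-n$ correction, and parts (b) and (c) follow from (a) exactly as in the paper.
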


\begin{proof}
\eqref{thm151112a1}
Set $K^*:=\Rhom KR\simeq\shift^{-n}K$, so we have 
$$\pd_R(K^*)=\pd_R(K)-n=0.$$
This explains the first step in the next display.
\begin{align*}
\sup(\Rhom PY)
&=\pd_R(K^*)+\sup(\Rhom PY)\\
&\geq\sup(\Lotimes{K^*}{\Rhom PY})\\
&=\sup(\Rhom {\Lotimes KP}{Y})\\
&\geq\sup(Y)-\sup(\Lotimes KP)\\
&\geq\sup(Y)-n-\sup(P)
\end{align*}
The second and last steps are by Fact~\ref{disc151112a}\eqref{disc151112a5}.
The fourth step is from Lemma~\ref{thm151112aw}\eqref{thm151112aw1}, applied to the complex $\Lotimes KP\in\catdfb(R)$
which has $\supp_R(\Lotimes KP)=\VE(\fa)$. The third step is from the  evaluation and adjunction isomorphisms
\begin{align*}
\Lotimes{K^*}{\Rhom PY}
&\simeq\Rhom P{\Lotimes{K^*}{Y}}\\
&\simeq\Rhom P{\Rhom{K}{Y}}\\
&\simeq\Rhom {\Lotimes KP}{Y}
\end{align*}
see Fact~\ref{disc151211a}. 

\eqref{thm151112a2}--\eqref{thm151112a3}
Note that 
our assumptions on $P$ imply that $0\not\simeq P\in\catdb(R)$, so we have $\inf(P),\sup(P)\in\bbz$.
Thus, the desired conclusions follow directly from part~\eqref{thm151112a1}
because of Fact~\ref{disc151112a}\eqref{disc151112a1}.
\end{proof}

\begin{disc}\label{disc151106a}
As in~\cite{sather:scc}, Theorem~\ref{thm151112a} has the following consequences. 
Let $P\in\catdb(R)$ be $\fa$-adically finite with  $\pd_R(P)<\infty$ and $\VE(\fa)=\supp_R(P)$.
Let $f\colon X\to Y$ be a morphism in $\catd(R)$ with 
$\supp_R(X),\supp_R(Y)\subseteq\VE(\fa)$.
Then $f$ is an isomorphism if and only if $\Rhom Pf$ is an isomorphism. 
We resist the urge to document every possible variation on this theme, here and elsewhere in the paper.

Each result like Theorem~\ref{thm151112a} has a version for $P\in\catdfb(R)$ Like Lemma~\ref{thm151112aw}.
While most of these results are new, we resist the urge to document them all, for the sake of brevity.
\end{disc}

See Theorem~\ref{thm151210d} for an improvement of the next result.

\begin{thm}\label{thm151210a}
Let $P\in\catdb(R)$ be $\fa$-adically finite such that  $\pd_R(P)<\infty$ and $\supp_R(P)=\VE(\fa)$.
Let $Y\in\catd(R)$ be such that 
$\cosupp_R(Y)\subseteq\VE(\fa)$.
\begin{enumerate}[\rm(a)]
\item\label{thm151210a1}
One has $\sup(Y)-\sup(P)-3n\leq\sup(\Rhom PY)$.
\item\label{thm151210a2}
One has $Y\in\catd_-(R)$ if and only if $\Rhom PY\in\catd_-(R)$.
\end{enumerate}
\end{thm}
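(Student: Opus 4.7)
The plan is to adapt the argument of Proposition~\ref{prop151126az} to this setting, using Theorem~\ref{thm151112a} as a black box in place of Proposition~\ref{prop151126a}. The issue is that Theorem~\ref{thm151112a} requires $\supp_R(Y)\subseteq\VE(\fa)$, whereas here we only know $\cosupp_R(Y)\subseteq\VE(\fa)$; the trick is that tensoring $Y$ with the Koszul complex $K$ converts cosupport information into support information, since $\supp_R(\Lotimes KY)\subseteq\supp_R(K)=\VE(\fa)$ by the tensor product formula in Fact~\ref{cor130528a}. This costs a Koszul ``shift'' of $n$ at each of the three places where $K$ appears, which accounts for the $3n$ in the inequality.

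For part~\eqref{thm151210a1}, the concrete chain I would run is as follows. First, apply Fact~\ref{disc151112a}\eqref{disc151112a5} with $\fd_R(K)=n$ to obtain
\[
\sup(\Rhom PY)\geq \sup(\Lotimes K{\Rhom PY})-n.
\]
Next, invoke the self-dual/tensor-evaluation isomorphisms in Fact~\ref{disc151211a} to rewrite
\[
\Lotimes K{\Rhom PY}\simeq\Rhom P{\Lotimes KY}.
\]
Now $\Lotimes KY$ has support in $\VE(\fa)$, so Theorem~\ref{thm151112a}\eqref{thm151112a1} applies to give
\[
\sup(\Rhom P{\Lotimes KY})\geq \sup(\Lotimes KY)-\sup(P)-n.
\]
Finally, since $\cosupp_R(Y)\subseteq\VE(\fa)$, Fact~\ref{lem150604a2}\eqref{lem150604a2z} yields $\sup(\Lotimes KY)\geq\sup(Y)-n$. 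Stringing these four bounds together produces the desired inequality $\sup(Y)-\sup(P)-3n\leq\sup(\Rhom PY)$.

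For part~\eqref{thm151210a2}, the forward direction is routine: $P\in\catdb(R)$ gives $\inf(P)\in\bbz$, so Fact~\ref{disc151112a}\eqref{disc151112a5} forces $\sup(\Rhom PY)\leq\sup(Y)-\inf(P)<\infty$ whenever $Y\in\catd_-(R)$. For the converse, the assumptions $0\not\simeq P\in\catdb(R)$ ensure $\sup(P)\in\bbz$, so the inequality from part~\eqref{thm151210a1} rearranges to $\sup(Y)\leq\sup(\Rhom PY)+\sup(P)+3n$, which is finite when $\Rhom PY\in\catd_-(R)$; hence $Y\in\catd_-(R)$ by Fact~\ref{disc151112a}\eqref{disc151112a1}.

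The main potential obstacle is verifying that the tensor-evaluation identity used in the second step really applies to complexes of this generality (we are applying it with no finiteness on $Y$ whatsoever), but this is covered by the general form recorded in Fact~\ref{disc151211a}. A secondary subtlety is making sure the support hypothesis of Theorem~\ref{thm151112a} is met by $\Lotimes KY$ and not merely by $Y$, which is precisely where the substitution $Y\rightsquigarrow\Lotimes KY$ pays off and why the cost $3n$ rather than $n$ is inherent to the argument.
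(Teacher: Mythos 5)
Your argument is correct and is essentially identical to the paper's proof: the same four-step chain (Koszul bound from Fact~\ref{disc151112a}\eqref{disc151112a5}, tensor-evaluation to move $K$ onto $Y$, Theorem~\ref{thm151112a}\eqref{thm151112a1} applied to $\Lotimes KY$, and Fact~\ref{lem150604a2}\eqref{lem150604a2z} to return to $\sup(Y)$), with the same bookkeeping yielding $3n$. The deduction of part~\eqref{thm151210a2} from part~\eqref{thm151210a1} matches the paper's (implicit) reduction as well.
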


\begin{proof}
It suffices to prove part~\eqref{thm151210a1}.
To this end, the first step in the next sequence is from Fact~\ref{disc151112a}\eqref{disc151112a5}.
\begin{align*}
\sup(\Rhom PY)
&\geq\sup(\Lotimes K{\Rhom PY})-n \\
&=\sup(\Rhom P{\Lotimes KY})-n \\
&\geq\sup(\Lotimes KY)-\sup(P)-2n \\
&\geq\sup(Y)-\sup(P)-3n 
\end{align*}
The second step  is by tensor-evaluation~\ref{disc151211a},
and the third step follows from Theorem~\ref{thm151112a}\eqref{thm151112a1} since we have
$\supp_R(\Lotimes KY)\subseteq\supp_R(K)=\VE(\fa)$.
The last step is from Fact~\ref{lem150604a2}\eqref{lem150604a2z}.
\end{proof}

\subsection*{Bounding by  Tor}
Here is a version of Propositions~\ref{prop151126a}--\ref{prop151126az} for Tor.
As with those results, the point is to guarantee the existence of non-zero Tor-modules,
when one assumes enough boundedness on $X$ and $Y$ to guarantee that $\Lotimes XY\in\catd_+(R)$.

\begin{prop}\label{prop151126b}
Let $X\in\catdb(R)$ be $\fa$-adically finite with $\supp_R(X)=\VE(\fa)$.
Let $Y\in\catd_+(R)$ be such that either $\supp_R(Y)\subseteq\VE(\fa)$ or $\cosupp_R(Y)\subseteq\VE(\fa)$.
\begin{enumerate}[\rm(a)]
\item \label{prop151126b1}
Then there is an inequality
\begin{align*}
\inf(\Lotimes XY)\leq\sup(X)+\inf(Y)+2n.
\end{align*}
\item \label{prop151126b2}
One has $Y\simeq 0$ if and only if $\Lotimes XY\simeq 0$. 
\end{enumerate}
\end{prop}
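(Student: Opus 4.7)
The plan is to mirror the arguments of Propositions~\ref{prop151126a} and~\ref{prop151126az}, translating from $\Rhom$ to $\Lotimes$ and from bounds on $\sup$ to bounds on $\inf$. The strategy has two layers: first establish a base-case Tor-infimum inequality when $X$ lies in $\catdfb(R)$ via the Tor analogue of \cite[Proposition~2.2]{foxby:ibcahtm} (the result used in the proof of Proposition~\ref{prop151126a}\eqref{prop151126a1}), then bootstrap to the adically finite case by replacing $X$ with $\Lotimes KX \in \catdfb(R)$. For the cosupport case, a further substitution $Y \mapsto \Lotimes KY$ is needed, reflecting the technique of Proposition~\ref{prop151126az}.

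For part~\eqref{prop151126b1} in the support case $\supp_R(Y) \subseteq \VE(\fa)$, since $X$ is $\fa$-adically finite we have $\Lotimes KX \in \catdfb(R)$ with $\supp_R(\Lotimes KX) = \VE(\fa) \supseteq \supp_R(Y)$, and $\sup(\Lotimes KX) = \sup(X) + n$ by Fact~\ref{lem150604a1}\eqref{lem150604a1z}. Invoking the Tor analogue of the classical Foxby--Iyengar inequality on $\Lotimes KX$ and $Y$ yields a bound of the form $\inf(\Lotimes{\Lotimes KX}{Y}) \leq \sup(\Lotimes KX) + \inf(Y) = \sup(X) + \inf(Y) + n$. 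Rewriting $\Lotimes{\Lotimes KX}{Y} \simeq \Lotimes K{\Lotimes XY}$ and combining with the trivial inequality $\inf(\Lotimes XY) \leq \inf(\Lotimes K{\Lotimes XY})$ from Fact~\ref{disc151112a}\eqref{disc151112a5} (using $\inf(K) = 0$) then delivers the desired upper bound, which is sharper than the claimed $+2n$.

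For the cosupport case $\cosupp_R(Y) \subseteq \VE(\fa)$, I substitute $Y' := \Lotimes KY$, which by Fact~\ref{cor130528a} satisfies $\supp_R(Y') \subseteq \supp_R(K) = \VE(\fa)$, and by Fact~\ref{lem150604a2}\eqref{lem150604a2z} satisfies $\inf(Y') = \inf(Y)$. Applying the support case to $(X,Y')$, then rewriting $\Lotimes X{\Lotimes KY} \simeq \Lotimes K{\Lotimes XY}$ and using Fact~\ref{disc151112a}\eqref{disc151112a5} one more time, chains two Koszul substitutions and produces the stated bound $\inf(\Lotimes XY) \leq \sup(X) + \inf(Y) + 2n$.

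Part~\eqref{prop151126b2} is then an easy corollary of part~\eqref{prop151126b1}: the forward direction is immediate, and for the converse, if $\Lotimes XY \simeq 0$ then $\inf(\Lotimes XY) = \infty$; since $\supp_R(X) = \VE(\fa) \neq \emptyset$ (as $\fa \neq R$) we have $X \not\simeq 0$, so $\sup(X) \in \bbz$ by Fact~\ref{disc151112a}\eqref{disc151112a4}, and combined with $n \in \bbz$ the bound from~\eqref{prop151126b1} forces $\inf(Y) = \infty$, i.e., $Y \simeq 0$. The main obstacle is pinning down the correct classical Foxby--Iyengar Tor-infimum inequality to serve as the base case and verifying that the constants combine to exactly $+2n$ after the Koszul substitutions; once this is arranged, the remaining reductions are routine bookkeeping with Facts~\ref{lem150604a1}, \ref{lem150604a2}, and~\ref{cor130528a}.
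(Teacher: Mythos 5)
Your overall architecture (establish a base case for complexes in $\catdfb(R)$, then bootstrap with Koszul insertions) parallels the paper's, but the base case you invoke does not exist, and this is a genuine gap. The inequality you call ``the Tor analogue of the classical Foxby--Iyengar inequality'' --- namely $\inf(\Lotimes PY)\leq\sup(P)+\inf(Y)$ for $P\in\catdfb(R)$ and $Y\in\catd_+(R)$ with $\supp_R(Y)\subseteq\Supp_R(P)$ --- is false. Take $(R,\fm,k)$ a discrete valuation ring with uniformizer $t$, and set $P=k$ and $Y=E:=E_R(k)$. Then $\supp_R(E)=\{\fm\}=\Supp_R(k)$, yet $\Otimes kE=E/\fm E=0$ while $\Tor 1kE\cong(0:_Et)\cong k\neq 0$, so $\inf(\Lotimes kE)=1>0=\sup(k)+\inf(E)$. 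Thus no inequality of this shape can serve as your base case (note that here $P$ even has homology annihilated by $\fm$, so restricting to Koszul-type $P$ does not rescue the statement), and your claimed sharper bound of $+n$ in the support case is consequently unsubstantiated. The underlying reason is that the argument behind \cite[Proposition~2.2]{foxby:ibcahtm} needs finite generation on the side of the complex whose homology carries the support hypothesis, and passing from $\Rhom$ to $\Lotimes$ by Matlis duality does not preserve support of non-finitely-generated homology (e.g., $\Supp_R(E^\vee)=\spec(R)$).

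This is precisely the difficulty the paper's proof is built to circumvent: it sets $(-)^\vee:=\Rhom -E$ for a faithfully injective $E$, first replaces $Y$ by $\Lotimes YK$ so that $\fa$ annihilates $\HH(\Lotimes YK)$ and hence $\HH((\Lotimes YK)^\vee)$, giving $\supp_R((\Lotimes YK)^\vee)\subseteq\VE(\fa)$ with $(\Lotimes YK)^\vee\in\catd_-(R)$, and then applies the already-proved Ext bound of Proposition~\ref{prop151126a}\eqref{prop151126a1} to $X$ and $(\Lotimes YK)^\vee$; unwinding the adjunction $(\Lotimes X{\Lotimes YK})^\vee\simeq\Rhom X{(\Lotimes YK)^\vee}$ and using Fact~\ref{lem150604a1}\eqref{lem150604a1z} yields $\inf(\Lotimes XY)\leq\inf(Y)+\sup(X)+2n$. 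Your reduction of the cosupport case to the support case via $Y\mapsto\Lotimes KY$ and your deduction of part~\eqref{prop151126b2} from part~\eqref{prop151126b1} are both fine and agree with the paper, but part~\eqref{prop151126b1} itself requires the duality detour rather than a direct Tor-infimum inequality.
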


\begin{proof}
\eqref{prop151126b1}
Let $E$ be a faithfully injective $R$-module and set $(-)^\vee:=\Rhom -E$.

Assume first that $\supp_R(Y)\subseteq\VE(\fa)$.
The first step in the following display is from Fact~\ref{disc151112a}\eqref{disc151112a5} since $\inf(K)=0$.
\begin{align*}
\inf(\Lotimes XY)
&\leq\inf(\Lotimes X{\Lotimes YK}) \\
&=-\sup((\Lotimes X{\Lotimes YK})^\vee) \\
&=-\sup(\Rhom X{(\Lotimes YK)^\vee})\\
&\leq-\sup((\Lotimes YK)^\vee)+\sup(X)+n \\
&=\inf(\Lotimes YK)+\sup(X)+n \\
&\leq\inf(Y)+\sup(X)+2n 
\end{align*}
The second and fifth steps are from the faithful injectivity of $E$,
and the third step is from adjointness.
For the fourth step, note that $\fa$ annihilates the homology of $\Lotimes YK$, 
so it also annihilates the homology of $(\Lotimes YK)^\vee$;
it follows that $\supp_R((\Lotimes YK)^\vee)\subseteq\VE(\fa)$.
Since we also have $(\Lotimes YK)^\vee\in\catd_-(R)$, the fourth step follows from Proposition~\ref{prop151126a}\eqref{prop151126a1}.
The sixth step is from Fact~\ref{lem150604a1}\eqref{lem150604a1z}.

Assume next that $\cosupp_R(Y)\subseteq\VE(\fa)$.
The first step in the next display is from Fact~\ref{disc151112a}\eqref{disc151112a5}.
\begin{align*}
\inf(\Lotimes XY)
&\leq\inf(\Lotimes X{\Lotimes YK}) \\
&\leq\inf(\Lotimes YK)+\sup(X)+2n \\
&=\inf(Y)+\sup(X)+2n 
\end{align*}
Since $\supp_R(\Lotimes KY)\subseteq\supp_R(K)\subseteq\VE(\fa)$, the second step follows from the previous paragraph.
The third step is from Fact~\ref{lem150604a2}\eqref{lem150604a2z}.

\eqref{prop151126b2} This follows from part~\eqref{prop151126b1}, using Fact~\ref{disc151112a}\eqref{disc151112a4}.
\end{proof}

We continue with more results along the lines of Theorem~\ref{thm151128c}
See Theorem~\ref{thm151210f} below for an improvement.

\begin{thm}\label{thm151112b}
Let $F\in\catdb(R)$ be $\fa$-adically finite such that  $\fd_R(F)<\infty$ and $\VE(\fa)=\supp_R(F)$.
Let $Z\in\catd(R)$ be such that 
$\cosupp_R(Z)\subseteq\VE(\fa)$.
\begin{enumerate}[\rm(a)]
\item\label{thm151112b1}
One has $\inf(\Lotimes FZ)\leq\inf(Z)+\sup(F)+n$.
\item\label{thm151112b2}
One has $Z\in\catd_+(R)$ if and only if $\Lotimes FZ\in\catd_+(R)$.
\item\label{thm151112b3}
One has $Z\simeq 0$ if and only if $\Lotimes FZ\simeq 0$.
\end{enumerate}
\end{thm}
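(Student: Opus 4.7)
The plan is a Koszul reduction followed by a Matlis-duality appeal to Lemma~\ref{thm151112aw}\eqref{thm151112aw1}. First I would pass to a $\catdfb(R)$-complex by setting $P:=\Lotimes KF$. Since $F$ is $\fa$-adically finite, $P\in\catdfb(R)$, with $\supp_R(P)=\VE(\fa)=\supp_R(F)$ and $\sup(P)=\sup(F)+n$ by Fact~\ref{lem150604a1}\eqref{lem150604a1z}. The hypothesis $\fd_R(F)<\infty$ combined with $P\in\catdfb(R)$ (on which flat and projective dimensions coincide) gives $\pd_R(P)<\infty$. Moreover, Fact~\ref{disc151112a}\eqref{disc151112a5} with $\inf(K)=0$ yields $\inf(\Lotimes FZ)\leq\inf(\Lotimes K{\Lotimes FZ})=\inf(\Lotimes PZ)$, so (a) reduces to the sharper statement
\[
\inf(\Lotimes PZ)\leq\inf(Z)+\sup(P).
\]

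To establish this I would use faithful Matlis duality. Fix a faithfully injective $R$-module $E$ (for example $E=\bigoplus_{\p\in\spec(R)}E_R(R/\p)$) and set $(-)^\vee:=\Rhom -E$. Faithfulness of $E$ gives $\inf(W)=-\sup(W^\vee)$ for each $W\in\catd(R)$, and Hom--tensor adjunction gives $(\Lotimes PZ)^\vee\simeq\Rhom P{Z^\vee}$; hence the displayed inequality is equivalent to $\sup(Z^\vee)-\sup(P)\leq\sup(\Rhom P{Z^\vee})$. The cosupport hypothesis on $Z$, combined with the support/cosupport swap $\supp_R(Z^\vee)=\cosupp_R(Z)$ valid for faithfully injective $E$ (in the BIK stratification framework of~\cite{benson:csc}, complementary to Fact~\ref{cor130528a} above and implicit in~\cite{sather:scc}), yields $\supp_R(Z^\vee)\subseteq\VE(\fa)=\supp_R(P)$. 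Lemma~\ref{thm151112aw}\eqref{thm151112aw1} applied to $P$ and $Y=Z^\vee$ then produces the required bound, with no additional boundedness assumption on $Z$ needed (since Lemma~\ref{thm151112aw}\eqref{thm151112aw1} accepts $Y\in\catd(R)$).

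The main obstacle will be verifying the support/cosupport swap, which is not explicitly recorded in the excerpt but should be routine given faithfulness of $E$ and the techniques of~\cite{sather:scc}. This route also explains why $\fd_R(F)<\infty$ enters the hypotheses: it is precisely what permits Lemma~\ref{thm151112aw}\eqref{thm151112aw1} to apply to $P=\Lotimes KF$. Parts (b) and (c) then follow from (a) in a standard fashion. For (b): the direction $Z\in\catd_+(R)\Rightarrow\Lotimes FZ\in\catd_+(R)$ comes from the general lower bound $\inf(\Lotimes FZ)\geq\inf(F)+\inf(Z)$ of Fact~\ref{disc151112a}\eqref{disc151112a5}, while the converse rearranges (a). For (c): the forward direction is standard, and for the reverse, $\Lotimes FZ\simeq 0$ combined with (a) and $\sup(F)\in\bbz$ forces $\inf(Z)=\infty$, that is, $Z\simeq 0$.
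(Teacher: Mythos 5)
Your overall architecture (Koszul reduction, then Matlis duality into an earlier Ext bound) is close in spirit to the paper's, but the step you flag as ``the main obstacle'' is not routine: it is false, and as written the proof does not go through. The claimed swap $\supp_R(Z^\vee)=\cosupp_R(Z)$ runs in the wrong direction. What Fact~\ref{cor130528a} actually gives is $\cosupp_R(Z^\vee)=\supp_R(Z)\bigcap\cosupp_R(E)=\supp_R(Z)$ (since $\cosupp_R(E)=\spec(R)$ for faithfully injective $E$); there is no companion formula computing $\supp_R(\Rhom ZE)$ from $\cosupp_R(Z)$. Concretely, let $R=\bbz_{(p)}$, $\fa=(p)$, $n=1$, and $Z=\Comp Ra$, the $p$-adic completion. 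Since $Z$ is flat and complete, $\LL aZ\simeq Z$, so $\cosupp_R(Z)\subseteq\VE(\fa)$ by Fact~\ref{cor130528a}. However, $\Hom{\Comp Ra}{\bbq}$ is a nonzero $\bbq$-vector space (restrict a $\bbq$-linear projection of the field of $p$-adic numbers onto $\bbq$) and is a direct summand of $\HH_0(Z^\vee)$; hence $\Lotimes{\kappa((0))}{Z^\vee}\neq 0$ and $(0)\in\supp_R(Z^\vee)$, so $\supp_R(Z^\vee)\not\subseteq\VE(\fa)=\supp_R(P)$. The hypothesis of Lemma~\ref{thm151112aw}\eqref{thm151112aw1} therefore fails for $Y=Z^\vee$, and your appeal to it is unjustified.

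The repair --- which is the paper's actual route --- is to Koszul-reduce $Z$ rather than (only) $F$. One has $\inf(\Lotimes FZ)\leq\inf(\Lotimes F{\Lotimes ZK})$, and Fact~\ref{lem150604a2}\eqref{lem150604a2z} gives $\inf(\Lotimes ZK)=\inf(Z)$; this is the (only) place the cosupport hypothesis is used. The complex $W:=\Lotimes ZK$ satisfies $\fa\HH(W)=0$, hence $\fa\HH(W^\vee)=0$ because $\HH_i(W^\vee)\cong\Hom{\HH_{-i}(W)}{E}$, and so $\supp_R(W^\vee)\subseteq\VE(\fa)$ holds honestly. Dualizing $\Lotimes FW$ and applying Theorem~\ref{thm151112a}\eqref{thm151112a1} with $P=F$ and $Y=W^\vee$ (or your Lemma~\ref{thm151112aw} with $P=\Lotimes KF$) then yields the bound with the stated constant $n$. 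Your deductions of (b) and (c) from (a) are fine once (a) is repaired.
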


\begin{proof}
\eqref{thm151112b1}
We first prove the special case where $\fa\HH(Z)=0$.
Let $E$ be a faithfully injective $R$-module, and set $(-)^\vee:=\Rhom -E$.
The assumption $\fa\HH(Z)=0$ implies that $\fa\HH(Z^\vee)=0$ since $E$ is injective. 
In particular, we have $\supp_R(Z^\vee)\subseteq\VE(\fa)$ by Fact~\ref{cor130528a}.
The fact that $E$ is faithfully injective explains the first and last steps in the next sequence.
\begin{align*}
\inf(\Lotimes FZ)
&=-\sup((\Lotimes FZ)^\vee)\\
&=-\sup(\Rhom F{Z^\vee}) \\
&\leq n+\sup(F)-\sup(Z^\vee)\\
&=n+\sup(F)+\inf(Z)
\end{align*}
The second step is from Hom-tensor adjointness.
The third step is from Theorem~\ref{thm151112a}\eqref{thm151112a1},
since~\cite[Theorem~6.1]{sather:afcc} implies $\pd_R(F)<\infty$.

Now we deal with the general case. 
The complex $\Lotimes ZK$ satisfies $\fa \HH(\Lotimes ZK)=0$. 
Thus, the previous paragraph explains the second step in the next sequence.
\begin{align*}
\inf(\Lotimes FZ)
&\leq\inf(\Lotimes F{\Lotimes ZK})\\
&\leq n+\sup(F)+\inf(\Lotimes ZK) \\
&=n+\sup(F)+\inf(Z)
\end{align*}
The first step is from Fact~\ref{disc151112a}\eqref{disc151112a5}, and the third step is from 
Fact~\ref{lem150604a2}\eqref{lem150604a2z}, wherein our co-support assumption is used. 

\eqref{thm151112b2}--\eqref{thm151112b3}
These follow from part~\eqref{thm151112b1} and Fact~\ref{disc151112a}\eqref{disc151112a5}.
\end{proof}

See Theorem~\ref{thm151210e} for an improvement of the next result.

\begin{thm}\label{thm151210b}
Let $F\in\catdb(R)$ be $\fa$-adically finite such that  $\fd_R(F)<\infty$ and $\VE(\fa)=\supp_R(F)$.
Let $Z\in\catd(R)$ be such that 
$\supp_R(Z)\subseteq\VE(\fa)$.
\begin{enumerate}[\rm(a)]
\item\label{thm151210b1}
One has $\inf(\Lotimes FZ)\leq\inf(Z)+\sup(F)+2n$.
\item\label{thm151210b2}
One has $Z\in\catd_+(R)$ if and only if $\Lotimes FZ\in\catd_+(R)$.
\end{enumerate}
\end{thm}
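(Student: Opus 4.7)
The plan is to reduce Theorem~\ref{thm151210b} to the already-established Theorem~\ref{thm151112b} by replacing $Z$ with its Koszul twist $\Lotimes KZ$, paralleling the strategy used to pass from Theorem~\ref{thm151112a} to Theorem~\ref{thm151210a}: converting a support hypothesis into a cosupport hypothesis at the cost of an additional $n$ in the bound.

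First I would verify that $\cosupp_R(\Lotimes KZ)\subseteq\VE(\fa)$. By the self-dual nature of the Koszul complex recalled in Fact~\ref{disc151211a}, there is an isomorphism $\Lotimes KZ\simeq\shift^n\Rhom KZ$, so the cosupport formula in Fact~\ref{cor130528a} gives $\cosupp_R(\Lotimes KZ)=\cosupp_R(\Rhom KZ)=\supp_R(K)\cap\cosupp_R(Z)\subseteq\VE(\fa)$. With this in hand, Theorem~\ref{thm151112b}\eqref{thm151112b1} applies to $\Lotimes KZ$ in place of $Z$, yielding $\inf(\Lotimes F{\Lotimes KZ})\leq\inf(\Lotimes KZ)+\sup(F)+n$. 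Combining this with the inequality $\inf(\Lotimes FZ)\leq\inf(\Lotimes F{\Lotimes KZ})$ from Fact~\ref{disc151112a}\eqref{disc151112a5} (using $\inf(K)=0$) and $\inf(\Lotimes KZ)\leq\inf(Z)+n$ from Fact~\ref{lem150604a1}\eqref{lem150604a1z} (which applies because $\supp_R(Z)\subseteq\VE(\fa)$), one obtains part~\eqref{thm151210b1}.

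For part~\eqref{thm151210b2}, the forward direction is immediate from the lower bound $\inf(F)+\inf(Z)\leq\inf(\Lotimes FZ)$ in Fact~\ref{disc151112a}\eqref{disc151112a5}. For the converse, the hypothesis $\supp_R(F)=\VE(\fa)\neq\emptyset$ forces $F\not\simeq 0$, so $\sup(F)\in\bbz$ by Fact~\ref{disc151112a}\eqref{disc151112a1}; the inequality from part~\eqref{thm151210b1} then rearranges to $\inf(Z)\geq\inf(\Lotimes FZ)-\sup(F)-2n$, giving $\inf(Z)>-\infty$ whenever $\Lotimes FZ\in\catd_+(R)$. The only real subtlety is the opening cosupport computation that legitimizes the use of Theorem~\ref{thm151112b}; once that observation is in place, the rest is bookkeeping with the Koszul inequalities.
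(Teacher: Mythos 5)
Your proposal is correct and follows essentially the same route as the paper: apply Theorem~\ref{thm151112b}\eqref{thm151112b1} to $\Lotimes KZ$ and absorb the two Koszul twists via Facts~\ref{disc151112a}\eqref{disc151112a5} and~\ref{lem150604a1}\eqref{lem150604a1z}. The only cosmetic difference is that the paper justifies $\cosupp_R(\Lotimes KZ)\subseteq\VE(\fa)$ by noting that $\fa$ annihilates $\HH(\Lotimes KZ)$, whereas you use self-duality of $K$ and the cosupport formula; both are valid.
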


\begin{proof}
It suffices to prove part~\eqref{thm151210b1}

The complex $\Lotimes ZK$ satisfies $\fa \HH(\Lotimes ZK)=0$. 
Thus, Theorem~\ref{thm151112b}\eqref{thm151112b1} explains the second step in the next sequence.
\begin{align*}
\inf(\Lotimes FZ)
&\leq\inf(\Lotimes F{\Lotimes ZK})\\
&\leq n+\sup(F)+\inf(\Lotimes ZK) \\
&\leq2n+\sup(F)+\inf(Z)
\end{align*}
The first and third steps are from Facts~\ref{disc151112a}\eqref{disc151112a5} and~\ref{lem150604a1}\eqref{lem150604a1z}.
\end{proof}

\subsection*{Ext Revisited}

Each of the preceding results of this section deals with only one invariant, either $\sup(\Rhom PY)$ or $\inf(\Lotimes FZ)$. 
We now show how to use these results to bootstrap our way to other invariants, e.g., $\inf(\Rhom PY)$ and $\sup(\Lotimes FZ)$,
beginning with an improvement of Theorem~\ref{thm151112a}.

\begin{lem}\label{thm151210cw}
Let $P\in\catdb(R)$ be $\fa$-adically finite such that $p\not\simeq 0$ and $\pd_R(P)<\infty$.
Let $Y\in\catd(R)$ be such that 
$\supp_R(Y)\subseteq\supp_R(P)$.
\begin{enumerate}[\rm(a)]
\item\label{thm151210cw1}
There are inequalities
\begin{gather*}
\inf(\Rhom PY)\leq\inf(Y)-\inf(P)+2n\\
\sup(Y)-\sup(P)\leq\sup(\Rhom PY)\\
\amp(Y)-\amp(P)-2n\leq\amp(\Rhom PY).
\end{gather*}
\item\label{thm151210cw2}
For  $*\in\{+,-,\text{b}\}$, one has  $Y\in\catd_*(R)$ if and only if $\Rhom PY\in\catd_*(R)$.
\end{enumerate}
\end{lem}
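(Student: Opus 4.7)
The plan is to establish the three displayed inequalities of part~(a) and then deduce part~(b) as a direct consequence via Fact~\ref{disc151112a}(a); the amplitude inequality will follow formally from the sup and inf inequalities.

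For the middle inequality $\sup(Y)-\sup(P)\leq\sup(\Rhom PY)$, I would mirror the proof of Lemma~\ref{thm151112aw}(a) in the adically finite setting. Since $\supp_R(Y)\subseteq\supp_R(P)\subseteq\VE(\fa)$, Fact~\ref{lem150604a1}(a) yields $\depth_\fa(Y)=-\sup(Y)$, which one expresses as an infimum of local depths $\depth_{R_\p}(Y_\p)$ over $\p\in\supp_R(P)$. A local depth/width balance combined with the standard bounds $\width_{R_\p}(P_\p)\leq\sup(P)$ and $\depth_{R_\p}((\Rhom PY)_\p)\geq-\sup(\Rhom PY)$ then yields the desired inequality. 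The key technical step is justifying the local computations when $P$ is only $\fa$-adically finite rather than in $\catdfb(R)$; this is handled by passing to $\Lotimes KP\in\catdfb(R)$ (which has $\pd_R<\infty$ and the same support as $P$), using Koszul self-duality (Fact~\ref{disc151211a}), and observing that $\supp_R(\Rhom PY)\subseteq\VE(\fa)$ (since each $\HH_i(Y)$ is $\fa$-torsion and $\RGno a$ commutes with $\Rhom P{-}$ on $\fa$-torsion targets).

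For the inf inequality $\inf(\Rhom PY)\leq\inf(Y)-\inf(P)+2n$, I would pass to Matlis duals. Let $E$ be a faithful injective cogenerator and set $(-)^{\vee}:=\Rhom{-}E$. Via Hom-tensor adjointness one has $\Rhom P{Y^{\vee}}\simeq(\Lotimes PY)^{\vee}$, translating the inf bound for $\Rhom PY$ into a sup bound for a tensor product. Applying Theorem~\ref{thm151112b}(a) with $F=P$ (using $\fd_R(P)\leq\pd_R(P)<\infty$) after verifying the cosupport condition $\cosupp_R(Y^{\vee})\subseteq\supp_R(Y)\subseteq\VE(\fa)$ via Fact~\ref{cor130528a} yields the bound; the $+2n$ slack reflects Koszul corrections needed to bridge support and cosupport hypotheses across the Matlis dual. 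The amplitude inequality is then an immediate algebraic consequence, since $P\in\catdb(R)$ with $P\not\simeq 0$ forces $\sup(P),\inf(P)\in\bbz$, and part~(b) follows from part~(a) via Fact~\ref{disc151112a}(a): boundedness above, below, or both of $Y$ corresponds through the respective inequalities to the analogous boundedness of $\Rhom PY$.

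The hardest part will be the middle inequality. The $\catdfb$-case proof of Lemma~\ref{thm151112aw}(a) relies on the local complexes $P_\p$ lying in $\catdfb(R_\p)$, which can fail for adically finite $P$ (as with $P=E_R(k)$ in a local ring). The Koszul reduction through $\Lotimes KP$ must therefore be done carefully to preserve sharpness---avoiding an extraneous $-n$ term on the right-hand side---and this requires leveraging the cancellation between $\sup(\Lotimes KP)=n+\sup(P)$ and the corresponding $n$-shift appearing in the identification $\Rhom{\Lotimes KP}{Y}\simeq\shift^{-n}\Lotimes K{\Rhom PY}$, combined with $\sup(\Lotimes K{\Rhom PY})=n+\sup(\Rhom PY)$ from Fact~\ref{lem150604a1}(a).
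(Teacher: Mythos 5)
Your reduction for the supremum inequality does not deliver the stated constant. Applying Lemma~\ref{thm151112aw}\eqref{thm151112aw1} to $\Lotimes KP\in\catdfb(R)$ gives $\sup(Y)-\sup(\Lotimes KP)\leq\sup(\Rhom{\Lotimes KP}{Y})$. The right-hand side does collapse to $\sup(\Rhom PY)$ exactly as you describe (the $n$ from the shift in $\Rhom{\Lotimes KP}{Y}\simeq\shift^{-n}\Lotimes K{\Rhom PY}$ cancels the $n$ from Fact~\ref{lem150604a1}\eqref{lem150604a1z} applied to $\Rhom PY$), but on the left-hand side $\sup(\Lotimes KP)=n+\sup(P)$ contributes a $-n$ that nothing compensates. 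So this route only yields $\sup(Y)-\sup(P)-n\leq\sup(\Rhom PY)$, which is Theorem~\ref{thm151112a}\eqref{thm151112a1}, not the sharper inequality claimed here. The paper obtains the sharp bound by citing Lemma~\ref{thm151112aw}\eqref{thm151112aw1} for $P$ itself (and its proof of the infimum bound asserts that $\Rhom PR$ lies in $\catdfb(R)$); in other words, the paper's argument treats $P$ as an object of $\catdfb(R)$, and there is no Koszul workaround of the kind you propose that preserves the constant.

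The infimum inequality is where the more serious gap lies. Hom-tensor adjointness gives $\Rhom P{Y^\vee}\simeq(\Lotimes PY)^\vee$, which is a statement about $\Rhom P{Y^\vee}$, not about $\Rhom PY$; it does not translate $\inf(\Rhom PY)$ into anything. The relevant identity is Hom-evaluation $(\Rhom PY)^\vee\simeq\Lotimes P{Y^\vee}$, which gives $\inf(\Rhom PY)=-\sup(\Lotimes P{Y^\vee})$, so you would need a \emph{lower} bound on the supremum of a derived tensor product. Theorem~\ref{thm151112b}\eqref{thm151112b1} supplies only an upper bound on the infimum: applying it with $F=P$ and $Z=Y^\vee$ and dualizing back produces yet another lower bound for $\sup(\Rhom PY)$, not the desired upper bound for $\inf(\Rhom PY)$. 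The lower-sup bounds of Theorems~\ref{thm151210e} and~\ref{thm151210f} would do the job, but they are proved later and rest on this lemma, so invoking them would be circular. The paper's proof instead sets $P^*:=\Rhom PR$, uses biduality and Hom-evaluation to write $\Rhom PY\simeq\Lotimes{P^*}Y$, and applies Theorem~\ref{thm151112b}\eqref{thm151112b1} with $F=P^*$ and $Z=\Lotimes YK$, using $\inf(Y)\geq\inf(\Lotimes YK)-n$ and $\sup(P^*)\leq-\inf(P)$ to account for the $2n$. Your treatment of the amplitude inequality and of part~\eqref{thm151210cw2} is fine once the two displayed bounds are in place.
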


\begin{proof}
In light of Fact~\ref{disc151112a} and
Lemma~\ref{thm151112aw}\eqref{thm151112aw1}, it suffices to prove the first inequality in part~\eqref{thm151210cw1}.
Set $P^*:=\Rhom PR$ which is in $\catdfb(R)$ and has $\pd_R(P^*)<\infty$. 
Also, from Fact~\ref{disc151112a}\eqref{disc151112a5}, we have $\sup(P^*)\leq-\inf(P)$, and thus the last step in the next sequence.
\begin{align*}
\inf(Y)
&\geq\inf(\Lotimes YK)-n \\
&\geq\inf(\Lotimes{P^*}{\Lotimes YK})-\sup(P^*)-2n \\
&\geq\inf(\Lotimes{P^*}{Y})-\sup(P^*)-2n \\
&=\inf(\Rhom PY)-\sup(P^*)-2n \\
&\geq \inf(\Rhom PY)+\inf(P)-2n
\end{align*}
The first step is from Fact~\ref{lem150604a1}\eqref{lem150604a1z}.
The second step is by Theorem~\ref{thm151112b}\eqref{thm151112b1}, since $\fa$ annihilates $\HH(\Lotimes YK)$.
The third step is from Fact~\ref{disc151112a}\eqref{disc151112a5},
and the fourth step is from the isomorphisms
$\Lotimes{P^*}{Y}\simeq \Rhom {P^{**}}Y\simeq \Rhom PY$; see~\cite[Lemma~4.4(I)]{avramov:hdouc}.
\end{proof}

\begin{thm}\label{thm151210c}
Let $P\in\catdb(R)$ be $\fa$-adically finite such that  $\pd_R(P)<\infty$ and $\supp_R(P)=\VE(\fa)$.
Let $Y\in\catd(R)$ be such that 
$\supp_R(Y)\subseteq\VE(\fa)$.
\begin{enumerate}[\rm(a)]
\item\label{thm151210c1}
There are inequalities
\begin{gather*}
\inf(\Rhom PY)\leq\inf(Y)-\inf(P)+3n\\
\sup(Y)-\sup(P)-n\leq\sup(\Rhom PY)\\
\amp(Y)-\amp(P)-4n\leq\amp(\Rhom PY).
\end{gather*}
\item\label{thm151210c2}
For  $*\in\{+,-,\text{b}\}$, one has  $Y\in\catd_*(R)$ if and only if $\Rhom PY\in\catd_*(R)$.
\end{enumerate}
\end{thm}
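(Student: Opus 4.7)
The sup bound $\sup(Y)-\sup(P)-n\leq\sup(\Rhom PY)$ is already Theorem~\ref{thm151112a}\eqref{thm151112a1}, so the real task is to establish the inf bound. The strategy is to bootstrap Lemma~\ref{thm151210cw} by replacing $P$ with the Koszul-thickened complex $Q:=\Lotimes KP$, which belongs to $\catdfb(R)$ and still satisfies $\pd_R(Q)<\infty$ and $\supp_R(Q)=\VE(\fa)$.

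Applying Lemma~\ref{thm151210cw}\eqref{thm151210cw1} to $Q$ and $Y$ yields $\inf(\Rhom QY)\leq\inf(Y)-\inf(Q)+2n$. Using adjointness and the self-dual nature of the Koszul complex (Fact~\ref{disc151211a}), rewrite
\[
\Rhom QY\simeq\Rhom K{\Rhom PY}\simeq\shift^{-n}\Lotimes K{\Rhom PY}.
\]
Since $\cosupp_R(\Rhom PY)=\supp_R(P)\cap\cosupp_R(Y)\subseteq\VE(\fa)$ by Fact~\ref{cor130528a}, the Koszul invariance in Fact~\ref{lem150604a2}\eqref{lem150604a2z} gives $\inf(\Lotimes K{\Rhom PY})=\inf(\Rhom PY)$, whence $\inf(\Rhom QY)=\inf(\Rhom PY)-n$. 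Combined with $\inf(Q)\geq\inf(P)$ (from Fact~\ref{disc151112a}\eqref{disc151112a5} and $\inf(K)=0$), this gives
\[
\inf(\Rhom PY)=n+\inf(\Rhom QY)\leq\inf(Y)-\inf(Q)+3n\leq\inf(Y)-\inf(P)+3n.
\]
The amplitude bound then follows by subtracting the two inequalities. For part~\eqref{thm151210c2}, the forward implications are routine from the standard bounds in Fact~\ref{disc151112a}\eqref{disc151112a5} (using that $\inf(P),\sup(P),\pd_R(P)$ are all finite), and the backward implications follow from the inequalities in \eqref{thm151210c1} together with Fact~\ref{disc151112a}\eqref{disc151112a1}, noting that $P\not\simeq 0$ since $\supp_R(P)=\VE(\fa)\neq\emptyset$.

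The main subtlety lies in the interplay between the support hypothesis on $Y$ (needed to apply Lemma~\ref{thm151210cw} after Koszul-thickening $P$) and the co-support condition required to trigger the equality $\inf(\Lotimes KZ)=\inf(Z)$. The key observation that makes the bootstrap go through is that $\Rhom PY$ automatically acquires co-support inside $\VE(\fa)$ from the hypothesis $\supp_R(P)=\VE(\fa)$, regardless of whether $\cosupp_R(Y)$ is controlled; this is precisely what allows us to invoke Fact~\ref{lem150604a2}\eqref{lem150604a2z} on $\Rhom PY$ even though the hypothesis on $Y$ only concerns support.
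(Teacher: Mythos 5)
Your proposal is correct and follows essentially the same route as the paper: the paper also bootstraps Lemma~\ref{thm151210cw} by applying it to the Koszul-thickened complex (there written as $\Rhom KP\simeq\shift^{-n}\Lotimes KP$, differing from your $Q$ only by a shift) and then unwinds via Hom-tensor adjointness/Hom-evaluation. The only cosmetic difference is that where you invoke the cosupport of $\Rhom PY$ and Fact~\ref{lem150604a2}\eqref{lem150604a2z} to get the equality $\inf(\Lotimes K{\Rhom PY})=\inf(\Rhom PY)$, the paper gets by with the weaker inequality $\inf(\Lotimes K{\Rhom PY})\geq\inf(\Rhom PY)$ from Fact~\ref{disc151112a}\eqref{disc151112a5}, which already points in the needed direction.
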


\begin{proof}
In light of Fact~\ref{disc151112a} and
Theorem~\ref{thm151112a}\eqref{thm151112a1}, it suffices to prove the first inequality in part~\eqref{thm151210c1}.
To this end, we apply Lemma~\ref{thm151210cw}\eqref{thm151210cw1} to the complex $\Rhom KP\in\catdfb(R)$.
For this, note that $\pd_R(\Rhom KP)<\infty$
and $\Rhom KP\simeq\shift^{-n}\Lotimes KP$ by Fact~\ref{disc151211a}.
It follows by assumption that 
$$\supp_R(\Rhom KP)=\supp_R(\Lotimes KP)=\supp_R(K)\bigcap\supp_R(P)=\VE(\fa).$$ 
Thus, Lemma~\ref{thm151210cw}\eqref{thm151210cw1} explains the first step in the next display.
\begin{align*}
\inf(Y)
&\geq\inf(\Rhom{\Rhom KP}Y)+\inf(\Rhom KP)-2n \\
&=\inf(\Lotimes K{\Rhom PY})+\inf(\shift^{-n}\Lotimes KP)-2n\\
&=\inf(\Lotimes K{\Rhom PY})+\inf(\Lotimes KP)-3n\\
&\geq\inf(\Rhom PY)+\inf(P)-3n
\end{align*}
The second step is from  Hom-evaluation~\ref{disc151211a}
and the isomorphism $\Rhom KP\simeq \shift^{-n}\Lotimes KP$ noted above. The third
step is straightforward, and the fourth one is from Fact~\ref{disc151112a}\eqref{disc151112a5}.
\end{proof}

The next result follows directly from
Theorems~\ref{thm151112a}
and~\ref{thm151210c} in the special case $F=\RG aR$.
In it we use the notation $\operatorname{cd}_\fa(R):=-\inf(\RG aR)$, which is the \emph{cohomological dimension of $R$ with respect to $\fa$}
and the standard interpretation $\depth_\fa(R)=-\sup(\RG aR)$ of depth in terms of local cohomology.

\begin{cor}\label{cor151213b}
Let $Y\in\catd(R)$ be such that 
$\supp_R(Y)\subseteq\VE(\fa)$.
\begin{enumerate}[\rm(a)]
\item\label{cor151213b1}
There are inequalities
\begin{gather*}
\inf(\LL aY)\leq\inf(Y)+\operatorname{cd}_\fa(R)+3n\\
\sup(Y)+\depth_\fa(R)-n\leq\sup(\LL aY)\\
\amp(Y)+\depth_\fa(R)-\operatorname{cd}_\fa(R)-4n\leq\amp(\LL aY).
\end{gather*}
\item\label{cor151213b2}
For  $*\in\{+,-,\text{b}\}$, one has  $Y\in\catd_*(R)$ if and only if $\LL aY\in\catd_*(R)$.
\item\label{cor151213b3}
One has $Y\simeq 0$ if and only if $\LL aY\simeq 0$.
\end{enumerate}
\end{cor}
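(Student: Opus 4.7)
The plan is to reduce to Theorems~\ref{thm151112a} and~\ref{thm151210c} applied with $P := \RG aR$, using the identification $\LL aY\simeq\Rhom{\RG aR}Y$ coming from Greenlees--May duality. First, I would verify that $P=\RG aR$ meets the hypotheses of both theorems. Since $\RG aR$ is represented by the \v{C}ech complex $\cech{\x}$, a bounded complex of flat modules, it lies in $\catdb(R)$ and has $\fd_R(\RG aR)<\infty$. It is $\fa$-adically finite by Example~\ref{ex160206a}\eqref{ex160206a2}, so \cite[Theorem~6.1]{sather:afcc} upgrades finite flat dimension to $\pd_R(\RG aR)<\infty$. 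Finally, adic finiteness yields $\supp_R(\RG aR)\subseteq\VE(\fa)$; for the reverse inclusion, for each $\p\in\VE(\fa)$ the residue field $\kappa(\p)$ is $\fa$-torsion, whence $\Lotimes{\RG aR}{\kappa(\p)}\simeq\RG a\kappa(\p)\simeq\kappa(\p)\not\simeq 0$ by Fact~\ref{cor130528a}, giving $\supp_R(\RG aR)=\VE(\fa)$.

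With $P=\RG aR$ in hand, part~\eqref{cor151213b1} is obtained by substituting into the inequalities of Theorem~\ref{thm151210c}\eqref{thm151210c1} and recording the definitional identities $\inf(\RG aR)=-\operatorname{cd}_\fa(R)$ and $\sup(\RG aR)=-\depth_\fa(R)$, which together give $\amp(\RG aR)=\operatorname{cd}_\fa(R)-\depth_\fa(R)$. Part~\eqref{cor151213b2} is then immediate from Theorem~\ref{thm151210c}\eqref{thm151210c2}. For part~\eqref{cor151213b3}, the forward implication is trivial, and for the converse I would invoke Theorem~\ref{thm151112a}\eqref{thm151112a3} with $P=\RG aR$, which is nonzero because $\fa\subsetneq R$ guarantees $\RG aR\not\simeq 0$.

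The hard part is really just the upfront verification that $\RG aR$ satisfies the hypotheses of both theorems, with the projective-dimension upgrade relying on \cite[Theorem~6.1]{sather:afcc}; everything else is direct substitution of $\inf$, $\sup$, and $\amp$ of $\RG aR$ into the general inequalities. I expect no essentially new obstacle aside from invoking the standard MGM-type identification $\LL aY\simeq\Rhom{\RG aR}Y$, which is classical and requires no fresh input here.
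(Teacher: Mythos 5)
Your proposal is correct and follows exactly the route the paper takes: the corollary is stated there as an immediate consequence of Theorems~\ref{thm151112a} and~\ref{thm151210c} applied with $P=\RG aR$, via the identification $\LL aY\simeq\Rhom{\RG aR}Y$ and the conventions $\inf(\RG aR)=-\operatorname{cd}_\fa(R)$, $\sup(\RG aR)=-\depth_\fa(R)$. Your upfront verification that $\RG aR$ satisfies the hypotheses (adic finiteness, finite projective dimension via \cite[Theorem~6.1]{sather:afcc}, and $\supp_R(\RG aR)=\VE(\fa)$) is exactly what the paper leaves implicit, and your substitutions are all accurate.
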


Next, we improve on Theorem~\ref{thm151210a}.

\begin{thm}\label{thm151210d}
Let $P\in\catdb(R)$ be $\fa$-adically finite such that  $\pd_R(P)<\infty$ and $\supp_R(P)=\VE(\fa)$.
Let $Y\in\catd(R)$ be such that 
$\cosupp_R(Y)\subseteq\VE(\fa)$.
\begin{enumerate}[\rm(a)]
\item\label{thm151210d1}
There are inequalities
\begin{gather*}
\inf(\Rhom PY)\leq\inf(Y)-\inf(P)+3n\\
\sup(Y)-\sup(P)-3n\leq\sup(\Rhom PY)\\
\amp(Y)-\amp(P)-6n\leq\amp(\Rhom PY).
\end{gather*}
\item\label{thm151210d2}
For  $*\in\{+,-,\text{b}\}$, one has  $Y\in\catd_*(R)$ if and only if $\Rhom PY\in\catd_*(R)$.
\end{enumerate}
\end{thm}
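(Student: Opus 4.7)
The plan is to observe that the inequality on $\sup$ is already contained in Theorem~\ref{thm151210a}\eqref{thm151210a1}, so the main work is to establish the inequality on $\inf$. The amplitude inequality then follows by subtraction, and part~(b) follows from these bounds together with the standard inequalities from Fact~\ref{disc151112a}\eqref{disc151112a5}. Thus it suffices to prove
\begin{equation*}
\inf(\Rhom PY)\leq\inf(Y)-\inf(P)+3n.
\end{equation*}

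For this, my plan is to reduce to Theorem~\ref{thm151210c}\eqref{thm151210c1} by tensoring $Y$ with $K$, and to use the tensor-evaluation isomorphism $\Lotimes K{\Rhom PY}\simeq\Rhom P{\Lotimes KY}$ from Fact~\ref{disc151211a}. On the left-hand side, I want to strip off the $K$ via the equality $\inf(\Rhom PY)=\inf(\Lotimes K{\Rhom PY})$, which is the content of Fact~\ref{lem150604a2}\eqref{lem150604a2z} provided $\cosupp_R(\Rhom PY)\subseteq\VE(\fa)$; this holds because $\cosupp_R(\Rhom PY)=\supp_R(P)\bigcap\cosupp_R(Y)\subseteq\VE(\fa)$ by Fact~\ref{cor130528a}. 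On the right-hand side, the complex $\Lotimes KY$ has support contained in $\supp_R(K)=\VE(\fa)$, so Theorem~\ref{thm151210c}\eqref{thm151210c1} applies and yields $\inf(\Rhom P{\Lotimes KY})\leq\inf(\Lotimes KY)-\inf(P)+3n$. Finally, the cosupport hypothesis on $Y$ gives $\inf(\Lotimes KY)=\inf(Y)$ by a second application of Fact~\ref{lem150604a2}\eqref{lem150604a2z}. Chaining these (in)equalities gives the desired bound.

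For part~(b), I would combine the three inequalities of~(a) with the basic estimates $\inf(Y)-\pd_R(P)\leq\inf(\Rhom PY)$ and $\sup(\Rhom PY)\leq\sup(Y)-\inf(P)$ from Fact~\ref{disc151112a}\eqref{disc151112a5}. Since $\pd_R(P)$, $\sup(P)$, and $\inf(P)$ are finite, these sandwich $\inf(\Rhom PY)$ and $\sup(\Rhom PY)$ between finite-offset versions of $\inf(Y)$ and $\sup(Y)$, so Fact~\ref{disc151112a}\eqref{disc151112a1} yields the equivalence for $\ast\in\{+,-,\text{b}\}$.

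The main obstacle is identifying the correct reduction: the cosupport condition on $Y$ does not obviously control $\inf(\Rhom PY)$ directly, and in the previously established Theorem~\ref{thm151210c} the hypothesis is on $\supp_R(Y)$ rather than $\cosupp_R(Y)$. The maneuver of passing through $\Lotimes KY$ --- which simultaneously converts the cosupport hypothesis on $Y$ into a support hypothesis on $\Lotimes KY$ and, via tensor-evaluation, commutes past $\Rhom P{-}$ --- is the essential trick. Once this is in place, the argument is just an orchestrated application of the preceding facts.
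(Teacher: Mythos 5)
Your proposal is correct and follows essentially the same route as the paper: reduce to the first inequality via Theorem~\ref{thm151210a}, then compute $\inf(\Rhom PY)=\inf(\Lotimes K{\Rhom PY})=\inf(\Rhom P{\Lotimes KY})\leq\inf(\Lotimes KY)-\inf(P)+3n=\inf(Y)-\inf(P)+3n$ using Fact~\ref{lem150604a2}\eqref{lem150604a2z}, tensor-evaluation, and Theorem~\ref{thm151210c}\eqref{thm151210c1}, exactly as in the paper's argument. The cosupport justification $\cosupp_R(\Rhom PY)=\supp_R(P)\bigcap\cosupp_R(Y)\subseteq\VE(\fa)$ and the derivation of part~(b) from Fact~\ref{disc151112a} also match the paper's reasoning.
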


\begin{proof}
Because of Theorem~\ref{thm151210a}\eqref{thm151210a1}, we need only prove the first inequality of part~\eqref{thm151210d1}.
Since we have $\cosupp_R(\Rhom PY)\subseteq\supp_R(P)=\VE(\fa)$, the first step in the next sequence is from Fact~\ref{lem150604a2}\eqref{lem150604a2z}.
\begin{align*}
\inf(\Rhom PY)
&=\inf(\Lotimes K{\Rhom PY}) \\
&=\inf(\Rhom P{\Lotimes KY}) \\
&\leq\inf(\Lotimes KY)-\inf(P)+3n \\
&=\inf(Y)-\inf(P)+3n 
\end{align*}
The second step  is by tensor-evaluation~\ref{disc151211a},
and the third step follows from Theorem~\ref{thm151210c}\eqref{thm151210c1} since we have
$\supp_R(\Lotimes KY)\subseteq\supp_R(K)=\VE(\fa)$.
The last step is from Fact~\ref{lem150604a2}\eqref{lem150604a2z}.
\end{proof}

\begin{disc}\label{disc151213a}
Note that we do not bother to state the special case $F=\RG aR$ of Theorem~\ref{thm151210d},
in contrast with Corollary~\ref{cor151213b}.
The reason is that Theorem~\ref{thm151210d} is trivial in this case. Indeed, the assumption $\cosupp_R(Y)\subseteq\VE(\fa)$ implies that
$Y\simeq\LL aY\simeq\Rhom{\RG aR}Y$ by Fact~\ref{cor130528a}. Thus, 
for instance Theorem~\ref{thm151210d}\eqref{thm151210d2}
says that for  $*\in\{+,-,\text{b}\}$, one has  $Y\in\catd_*(R)$ if and only if $Y\in\catd_*(R)$.
\end{disc}

\subsection*{Tor Revisited}
Our next result improves upon Theorem~\ref{thm151210b} and contains Theorem~\ref{thm151210ew} from the introduction. 

\begin{thm}\label{thm151210e}
Let $F\in\catdb(R)$ be $\fa$-adically finite such that  $\fd_R(F)<\infty$ and $\supp_R(F)=\VE(\fa)$.
Let $Z\in\catd(R)$ be such that 
$\supp_R(Z)\subseteq\VE(\fa)$.
\begin{enumerate}[\rm(a)]
\item\label{thm151210e1}
There are inequalities
\begin{gather*}
\inf(\Lotimes FZ)\leq\inf(Z)+\sup(F)+2n\\
\inf(F)+\sup(Z)-3n\leq\sup(\Lotimes FZ)\\
\amp(Z)-\amp(F)-5n\leq\amp(\Lotimes FZ).
\end{gather*}
\item\label{thm151210e2}
For  $*\in\{+,-,\text{b}\}$, one has  $Z\in\catd_*(R)$ if and only if $\Lotimes FZ\in\catd_*(R)$.
\end{enumerate}
\end{thm}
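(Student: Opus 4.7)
My plan has three movements. First, observe that the opening inequality of part~\eqref{thm151210e1} is already furnished by Theorem~\ref{thm151210b}\eqref{thm151210b1}, and that the amplitude estimate follows formally by subtracting the first two displayed inequalities:
\[
\amp(\Lotimes FZ)\geq(\inf(F)+\sup(Z)-3n)-(\inf(Z)+\sup(F)+2n)=\amp(Z)-\amp(F)-5n.
\]
Once part~\eqref{thm151210e1} is in hand, part~\eqref{thm151210e2} will follow via Fact~\ref{disc151112a}: the forward direction $Z\in\catd_+(R)\Rightarrow\Lotimes FZ\in\catd_+(R)$ uses $\inf(F)+\inf(Z)\leq\inf(\Lotimes FZ)$; the forward direction $Z\in\catd_-(R)\Rightarrow\Lotimes FZ\in\catd_-(R)$ uses $\sup(\Lotimes FZ)\leq\sup(Z)+\fd_R(F)$; and the reverse implications come from the first and second displayed inequalities respectively, since $\VE(\fa)=\supp_R(F)\neq\emptyset$ forces $F\not\simeq 0$ and hence $\inf(F),\sup(F)\in\bbz$. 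Thus the only genuine work is the second inequality $\inf(F)+\sup(Z)-3n\leq\sup(\Lotimes FZ)$.

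For this key inequality I would employ the two-step Matlis-dualization strategy used in the proof of Theorem~\ref{thm151112b}. First, treat the special case $\fa\HH(Z)=0$: fix a faithfully injective $R$-module $E$ and set $(-)^\vee:=\Rhom{-}{E}$. Since $E$ is injective, the condition $\fa\HH(Z)=0$ propagates to $\fa\HH(Z^\vee)=0$, and in particular $\supp_R(Z^\vee)\subseteq\VE(\fa)$. Because $F$ is $\fa$-adically finite with $\supp_R(F)=\VE(\fa)$ and has $\pd_R(F)<\infty$ by~\cite[Theorem~6.1]{sather:afcc}, Theorem~\ref{thm151210c}\eqref{thm151210c1} applied to $P=F$ and $Y=Z^\vee$ yields
\[
\inf(\Rhom F{Z^\vee})\leq\inf(Z^\vee)-\inf(F)+3n.
\]
Combining this with the Hom-tensor adjointness $(\Lotimes FZ)^\vee\simeq\Rhom F{Z^\vee}$ and the faithful-injectivity identities $\sup(X)=-\inf(X^\vee)$ and $\inf(X^\vee)=-\sup(X)$ produces
\[
\sup(\Lotimes FZ)=-\inf(\Rhom F{Z^\vee})\geq\inf(F)-\inf(Z^\vee)-3n=\inf(F)+\sup(Z)-3n,
\]
which establishes the special case.

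For the general case, reduce to the special case by tensoring with $K$: since $\fa\HH(\Lotimes KZ)=0$, the special case applied to $\Lotimes KZ$ gives $\sup(\Lotimes F{\Lotimes KZ})\geq\sup(\Lotimes KZ)+\inf(F)-3n$. Now $\supp_R(Z)\subseteq\VE(\fa)$ and $\supp_R(\Lotimes FZ)=\supp_R(F)\cap\supp_R(Z)\subseteq\VE(\fa)$, so Fact~\ref{lem150604a1}\eqref{lem150604a1z} gives both $\sup(\Lotimes KZ)=n+\sup(Z)$ and $\sup(\Lotimes K{\Lotimes FZ})=n+\sup(\Lotimes FZ)$. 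The associativity $\Lotimes F{\Lotimes KZ}\simeq\Lotimes K{\Lotimes FZ}$ lets the factors of $n$ cancel, producing $\sup(\Lotimes FZ)\geq\sup(Z)+\inf(F)-3n$ as desired. The only obstacle I anticipate is bookkeeping: tracking that the adic-finiteness and support hypotheses propagate correctly through both the Matlis dual and the Koszul reduction, so that Theorem~\ref{thm151210c} and Fact~\ref{lem150604a1} can be invoked in the right places. No new conceptual ingredient beyond those already present in the proofs of Theorems~\ref{thm151112b} and~\ref{thm151210c} should be required.
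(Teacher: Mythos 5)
Your proposal is correct, and all three of the routine reductions (first inequality from Theorem~\ref{thm151210b}, amplitude by subtraction, part~(b) via Fact~\ref{disc151112a}) match the paper. For the key inequality $\inf(F)+\sup(Z)-3n\leq\sup(\Lotimes FZ)$, however, you take a genuinely different path from the paper's. The paper also dualizes into a faithfully injective module $E$ and uses adjointness, but then it applies the \emph{cosupport} version of the Ext bound, Theorem~\ref{thm151210d}\eqref{thm151210d1}, directly to $Y=Z^\vee$: the point is that Fact~\ref{cor130528a} gives $\cosupp_R(Z^\vee)=\supp_R(Z)\bigcap\cosupp_R(E)\subseteq\VE(\fa)$ with no hypothesis beyond $\supp_R(Z)\subseteq\VE(\fa)$, so no case analysis is needed and the proof is a single four-line display. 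You instead avoid cosupport altogether: you first reduce to the case $\fa\HH(Z)=0$ (where the support condition does pass to $Z^\vee$) so that the \emph{support} version, Theorem~\ref{thm151210c}\eqref{thm151210c1}, applies, and then you handle the general case by tensoring with $K$; the exact equality $\sup(\Lotimes KW)=n+\sup(W)$ from Fact~\ref{lem150604a1}\eqref{lem150604a1z}, applied to both $W=Z$ and $W=\Lotimes FZ$, makes the Koszul shifts cancel so you land on the same constant $-3n$. Both arguments are valid and yield identical bounds; the paper's is shorter because the Matlis dual converts support to cosupport for free, while yours is self-contained at the level of Theorem~\ref{thm151210c} and mirrors the two-step pattern already used in the proof of Theorem~\ref{thm151112b}.
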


\begin{proof}
Recall that~\cite[Theorem~6.1]{sather:afcc} implies $\pd_R(F)<\infty$. 
Let $E$ be a faithfully injective $R$-module, and set $(-)^\vee:=\Rhom -E$.
It follows from Fact~\ref{cor130528a} that $\cosupp_R(Z^\vee)\subseteq\supp_R(Z)\subseteq\VE(\fa)$.
Thus, the third step in the next display is from Theorem~\ref{thm151210d}\eqref{thm151210d1}.
\begin{align*}
\sup(\Lotimes FZ)
&=-\inf((\Lotimes FZ)^\vee)\\
&=-\inf(\Rhom F{Z^\vee}) \\
&\geq \inf(F)-\inf(Z^\vee)-3n\\
&\geq \inf(F)+\sup(Z)-3n
\end{align*}
The first and last steps follow from the fact that $E$ is faithfully injective,
and the second step is from Hom-tensor adjointness.
This explains the second inequality of part~\eqref{thm151210e1}.
The rest of the result follows from Theorem~\ref{thm151210b}.
\end{proof}

To conclude this section, we improve on Theorem~\ref{thm151112b}; the corollary is the special case $F=\RG aR$.

\begin{thm}\label{thm151210f}
Let $F\in\catdb(R)$ be $\fa$-adically finite such that  $\fd_R(F)<\infty$ and $\supp_R(F)=\VE(\fa)$.
Let $Z\in\catd(R)$ be such that 
$\cosupp_R(Z)\subseteq\VE(\fa)$.
\begin{enumerate}[\rm(a)]
\item\label{thm151210f1}
There are inequalities
\begin{gather*}
\inf(\Lotimes FZ)\leq\inf(Z)+\sup(F)+n\\
\sup(Z)+\inf(F)-5n\leq\sup(\Lotimes FZ)\\
\amp(Z)-\amp(F)-6n\leq\amp(\Lotimes FZ).
\end{gather*}
\item\label{thm151210f2}
For  $*\in\{+,-,\text{b}\}$, one has  $Z\in\catd_*(R)$ if and only if $\Lotimes FZ\in\catd_*(R)$.
\end{enumerate}
\end{thm}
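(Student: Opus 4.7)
The plan is to follow the template established by Theorem~\ref{thm151210e}. The first inequality $\inf(\Lotimes FZ)\leq\inf(Z)+\sup(F)+n$ is already established as Theorem~\ref{thm151112b}\eqref{thm151112b1}, since its hypotheses (including $\cosupp_R(Z)\subseteq\VE(\fa)$) coincide with those of the current theorem. The third inequality is obtained by subtracting the first from the second, after recalling that $F\not\simeq 0\in\catdb(R)$ together with $\supp_R(F)=\VE(\fa)\neq\emptyset$ forces $\inf(F),\sup(F)\in\bbz$. So the heart of the matter is the middle inequality $\sup(Z)+\inf(F)-5n\leq\sup(\Lotimes FZ)$.

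The main obstacle here is that our strongest available tool, Theorem~\ref{thm151210e}\eqref{thm151210e1}, requires $\supp_R(Z)\subseteq\VE(\fa)$, whereas we only assume $\cosupp_R(Z)\subseteq\VE(\fa)$. To bridge this gap I would pass through the auxiliary complex $\Lotimes KZ$, which satisfies $\supp_R(\Lotimes KZ)\subseteq\supp_R(K)=\VE(\fa)$ by Fact~\ref{cor130528a}. Applying Theorem~\ref{thm151210e}\eqref{thm151210e1} with $Z$ replaced by $\Lotimes KZ$ gives
\[
\inf(F)+\sup(\Lotimes KZ)-3n\leq\sup(\Lotimes F{\Lotimes KZ}).
\]
The cosupport hypothesis now enters through Fact~\ref{lem150604a2}\eqref{lem150604a2z}, which supplies $\sup(Z)-n\leq\sup(\Lotimes KZ)$. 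On the other side, associativity of the derived tensor product yields $\Lotimes F{\Lotimes KZ}\simeq\Lotimes K{\Lotimes FZ}$, and then Fact~\ref{disc151112a}\eqref{disc151112a5} bounds $\sup(\Lotimes K{\Lotimes FZ})\leq\sup(\Lotimes FZ)+\fd_R(K)=\sup(\Lotimes FZ)+n$. Chaining these three estimates produces precisely the $-5n$ needed.

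Finally, for part~\eqref{thm151210f2} I would combine the inequalities of part~\eqref{thm151210f1} with the general bounds of Fact~\ref{disc151112a}\eqref{disc151112a5}. The forward direction for $*=+$ uses $\inf(F)+\inf(Z)\leq\inf(\Lotimes FZ)$ (with $\inf(F)$ finite), and for $*=-$ it uses $\sup(\Lotimes FZ)\leq\sup(Z)+\fd_R(F)$ (finite since $\fd_R(F)<\infty$). The reverse directions are where the new inequalities pay off: the first inequality of~\eqref{thm151210f1} rearranges to $\inf(Z)\geq\inf(\Lotimes FZ)-\sup(F)-n$, giving $Z\in\catd_+(R)$ whenever $\Lotimes FZ\in\catd_+(R)$; and the second inequality rearranges to $\sup(Z)\leq\sup(\Lotimes FZ)-\inf(F)+5n$, giving $Z\in\catd_-(R)$ whenever $\Lotimes FZ\in\catd_-(R)$. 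The case $*=\mathrm{b}$ is the conjunction of these two.
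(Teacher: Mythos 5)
Your proposal is correct and follows essentially the same route as the paper: the first inequality is quoted from Theorem~\ref{thm151112b}, and the supremum bound is obtained by applying Theorem~\ref{thm151210e}\eqref{thm151210e1} to $\Lotimes KZ$ and then comparing $\sup(\Lotimes F{\Lotimes KZ})$ with $\sup(\Lotimes FZ)$ and $\sup(\Lotimes KZ)$ with $\sup(Z)$ via Fact~\ref{lem150604a2}\eqref{lem150604a2z}. The only cosmetic difference is that for the step $\sup(\Lotimes K{\Lotimes FZ})\leq\sup(\Lotimes FZ)+n$ you invoke the generic bound of Fact~\ref{disc151112a}\eqref{disc151112a5}, whereas the paper uses the equality of Fact~\ref{lem150604a1}\eqref{lem150604a1z} (valid since $\supp_R(\Lotimes FZ)\subseteq\VE(\fa)$); both yield the same constant $-5n$.
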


\begin{proof}
In light of Theorem~\ref{thm151112b}, we need only bound $\sup(\Lotimes FZ)$.
We compute, starting with an application of Fact~\ref{lem150604a1}\eqref{lem150604a1z}.
\begin{align*}
\sup(\Lotimes FZ)
&=\sup(\Lotimes F{\Lotimes ZK})-n \\
&\geq \inf(F)+\sup(\Lotimes ZK)-4n\\
&\geq \inf(F)+\sup(Z)-5n
\end{align*}
The remaining steps are by Theorem~\ref{thm151210e}\eqref{thm151210e1} and Fact~\ref{lem150604a2}\eqref{lem150604a2z}.
\end{proof}

\begin{cor}\label{cor151213a}
Let $Z\in\catd(R)$ be such that 
$\cosupp_R(Z)\subseteq\VE(\fa)$.
\begin{enumerate}[\rm(a)]
\item\label{cor151213a1}
There are inequalities
\begin{gather*}
\inf(\RG aZ)\leq\inf(Z)-\depth_\fa(R)+n\\
\sup(Z)-\operatorname{cd}_\fa(R)-5n\leq\sup(\RG aZ)\\
\amp(Z)-\operatorname{cd}_\fa(R)+\depth_\fa(R)-6n\leq\amp(\RG aZ).
\end{gather*}
\item\label{cor151213a2}
For  $*\in\{+,-,\text{b}\}$, one has  $Z\in\catd_*(R)$ if and only if $\RG aZ\in\catd_*(R)$.
\item\label{cor151213a3}
One has $Z\simeq 0$ if and only if $\RG aZ\simeq 0$.
\end{enumerate}
\end{cor}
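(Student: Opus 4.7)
The plan is to invoke Theorem \ref{thm151210f} with $F := \RG aR$. First I would check the hypotheses on this $F$: adic finiteness is Example \ref{ex160206a}\eqref{ex160206a2}; finiteness of $\fd_R(\RG aR)$ (in fact $\leq n$) follows from the fact that $\RG aR$ is computed by the \v{C}ech complex $\cech{\x}$, which is a bounded complex of flat $R$-modules; and the equality $\supp_R(\RG aR)=\VE(\fa)$ follows from the localization identification $(\RG aR)_{\p}\simeq\RG{\fa R_{\p}}{R_{\p}}$, which is nonzero precisely when $\fa R_{\p}\neq R_{\p}$, combined with Fact~\ref{cor130528a}.

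Next I would invoke the standard isomorphism $\Lotimes{\RG aR}{Z}\simeq\RG aZ$ together with the definitional identities $\sup(\RG aR)=-\depth_\fa(R)$, $\inf(\RG aR)=-\operatorname{cd}_\fa(R)$, and consequently $\amp(\RG aR)=\operatorname{cd}_\fa(R)-\depth_\fa(R)$. Substituting these values directly into the three inequalities of Theorem \ref{thm151210f}\eqref{thm151210f1} produces the three displayed inequalities of part \eqref{cor151213a1}, and Theorem \ref{thm151210f}\eqref{thm151210f2} specializes to part \eqref{cor151213a2}.

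For part \eqref{cor151213a3}, the forward implication is trivial. For the converse, if $\RG aZ\simeq 0$, then $\sup(\RG aZ)=-\infty$, so the middle inequality of \eqref{cor151213a1} forces $\sup(Z)\leq-\infty$, and hence $Z\simeq 0$ by Fact \ref{disc151112a}\eqref{disc151112a4}.

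There is no serious obstacle once Theorem \ref{thm151210f} is in hand; the argument is essentially a bookkeeping exercise. The only mild verifications are $\fd_R(\RG aR)<\infty$ and $\supp_R(\RG aR)=\VE(\fa)$, both of which are standard \v{C}ech-complex computations, together with the replacement of $\sup(F)$, $\inf(F)$, and $\amp(F)$ by the corresponding invariants of $\fa$.
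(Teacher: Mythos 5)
Your proposal is correct and matches the paper's approach exactly: the paper gives no separate proof, stating only that the corollary is the special case $F=\RG aR$ of Theorem~\ref{thm151210f}, and your verification of the hypotheses (adic finiteness via Example~\ref{ex160206a}, finite flat dimension via the \v{C}ech complex, $\supp_R(\RG aR)=\VE(\fa)$) together with the substitutions $\sup(\RG aR)=-\depth_\fa(R)$ and $\inf(\RG aR)=-\operatorname{cd}_\fa(R)$ is precisely the intended bookkeeping. Your deduction of part~(c) from the supremum inequality is also the same device the paper uses for the parallel Corollary~\ref{cor151213b}.
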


\section{Bounding Homology II: Over a Ring Homomorphism} \label{sec151210a}

We continue with the theme from the previous section.
For instance, the next result, containing Theorem~\ref{thm151128c} from the introduction,
is similar to Theorem~\ref{thm151210e}, 
the differences being fewer restrictions on $F$ and more restrictions on $X$.
For perspective on the assumption $\vf^*(\supp_S(F))\supseteq\VE(\fa)\bigcap\mspec(R)$, 
note that~\cite[Proposition~5.6]{sather:afcc} implies that this condition
is less restrictive than the seemingly more natural condition
$\supp_R(F)\supseteq\VE(\fa)\bigcap\mspec(R)$.
Also, technically, the complex $\Lotimes XF$ in the next result should be written $\Lotimes X{Q(F)}$ where
$Q\colon\catd(S)\to\catd(R)$ is the forgetful functor, and similarly for $\fd_R(F)$.
We choose not to do this, in order to avoid cumbersome notation.

\begin{thm}\label{thm151105b}
Let $\vf\colon R\to S$ be a ring homomorphism such that $\fa S\neq S$, and let $F\in\catdb(S)$ be $\fa S$-adically finite such that  $\fd_R(F)<\infty$  
and $\vf^*(\supp_S(F))\supseteq\VE(\fa)\bigcap\mspec(R)$.
Let $X\in\catd(R)$ be such that  $\supp_R(X)\subseteq\VE(\fa)$.
Assume that
$\Lotimes{K}X\in\catdf(R)$.
\begin{enumerate}[\rm(a)]
\item\label{thm151105b1}
There are inequalities
\begin{gather}
\inf(\Lotimes FX)\leq\inf(X)+\sup(F)+2n \label{thm151105b1a}\\
\sup(X)+\inf(F)-n\leq\sup(\Lotimes FX)\label{thm151105b1b}\\
\amp(X)-\amp(F)-3n\leq\amp(\Lotimes FX).\notag
\end{gather}
\item\label{thm151105b2}
One has $X\simeq 0$ if and only if $\Lotimes FX\simeq 0$.
\item\label{thm151105b3}
For each $*\in\{+,-,\text{b}\}$, one has $X\in\catd_*(R)$ if and only if $\Lotimes FX\in\catd_*(R)$.
\end{enumerate}
\end{thm}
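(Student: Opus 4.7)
The plan is to mirror the strategy of Theorem~\ref{thm151210e} (and its preparatory results Theorem~\ref{thm151210b} and Theorem~\ref{thm151112b}), with the key adjustment being that we cannot assume $F$ is $\fa$-adically finite over $R$; instead, we must exploit the ring-homomorphism hypothesis $\vf^*(\supp_S(F))\supseteq\VE(\fa)\bigcap\mspec(R)$ by invoking the non-local amplitude inequality of Frankild and Sather-Wagstaff~\cite{frankild:rrhffd} (building on Foxby and Iyengar~\cite{foxby:daafuc}). Parts~\eqref{thm151105b2} and~\eqref{thm151105b3} will follow formally from part~\eqref{thm151105b1} via Fact~\ref{disc151112a}\eqref{disc151112a1} and~\eqref{disc151112a4} together with Fact~\ref{disc151112a}\eqref{disc151112a5} (the reverse direction in~\eqref{thm151105b3} uses $\inf(\Lotimes FX)\geq\inf(F)+\inf(X)$ and $\sup(\Lotimes FX)\leq\sup(X)+\fd_R(F)$), so I would focus exclusively on the inequalities in~\eqref{thm151105b1}.

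The central reduction is the same one used repeatedly in Section~\ref{sec151104a}: replace $X$ by $\Lotimes{K}{X}$. By hypothesis this complex lies in $\catdf(R)$; by Fact~\ref{cor130528a} one has $\supp_R(\Lotimes{K}{X})\subseteq\supp_R(K)\bigcap\supp_R(X)\subseteq\VE(\fa)$; and by Fact~\ref{lem150604a1}\eqref{lem150604a1a} it is bounded (or bounded below/above) precisely when $X$ is. Thus $\Lotimes{K}{X}$ sits in $\catdf(R)$ with support in $\VE(\fa)$, which is exactly the hypothesis needed to apply the ring-homomorphism amplitude inequality to the pair $\bigl(F,\,\Lotimes{K}{X}\bigr)$, yielding
\begin{equation*}
\inf\bigl(\Lotimes{F}{\Lotimes{K}{X}}\bigr)\leq\inf(\Lotimes{K}{X})+\sup(F)+n,
\qquad
\sup(\Lotimes{K}{X})+\inf(F)\leq\sup\bigl(\Lotimes{F}{\Lotimes{K}{X}}\bigr).
\end{equation*}
(The support hypothesis $\vf^*(\supp_S(F))\supseteq\VE(\fa)\bigcap\mspec(R)$ is what feeds into~\cite{frankild:rrhffd}, replacing the stronger local hypothesis used in~\cite{foxby:daafuc}.)

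To translate these bounds on $\Lotimes{F}{\Lotimes{K}{X}}$ into bounds on $\Lotimes{F}{X}$, I would use: $\inf(\Lotimes{F}{X})\leq\inf(\Lotimes{F}{\Lotimes{K}{X}})$ from Fact~\ref{disc151112a}\eqref{disc151112a5} (since $\inf(K)=0$); the equality $\sup(\Lotimes{K}{X})=n+\sup(X)$ and inequality $\inf(\Lotimes{K}{X})\leq n+\inf(X)$ from Fact~\ref{lem150604a1}\eqref{lem150604a1z}; and the associativity-based estimate
$\sup\bigl(\Lotimes{F}{\Lotimes{K}{X}}\bigr)=\sup\bigl(\Lotimes{K}{\Lotimes{F}{X}}\bigr)\leq\sup(\Lotimes{F}{X})+n$ (the Koszul complex has $\fd_R(K)=n$). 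Chaining these gives inequality~\eqref{thm151105b1a} with the stated constant $+2n$ and inequality~\eqref{thm151105b1b} with the stated constant $-n$. The amplitude inequality in~\eqref{thm151105b1} is obtained by subtracting the first from the second and collecting constants.

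The main obstacle is verifying that the hypotheses of the ring-homomorphism amplitude inequality from~\cite{frankild:rrhffd} are met in the form in which they are stated there; in particular, checking that the support condition $\vf^*(\supp_S(F))\supseteq\VE(\fa)\bigcap\mspec(R)$ suffices (as opposed to requiring $\supp_R(F)\supseteq\VE(\fa)$) — this is precisely the point where~\cite[Proposition~5.6]{sather:afcc} is invoked, as noted in the paper. Once this is set up, the remainder is bookkeeping with the Koszul-translation facts of Section~\ref{sec140109b}.
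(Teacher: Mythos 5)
Your overall strategy is the right one, but there is a genuine gap in the middle: you apply the amplitude inequality of~\cite{frankild:rrhffd} directly to the pair $\bigl(F,\Lotimes KX\bigr)$. That result (Theorem~4.2 of~\cite{frankild:rrhffd}, like its local antecedent in~\cite{foxby:daafuc}) requires the finite-flat-dimension complex to lie in $\catdfb(S)$, i.e.\ to have degreewise finitely generated homology over $S$. Here $F$ is only assumed to be $\fa S$-adically finite, which is strictly weaker, so the invocation is not legitimate as stated. The paper's proof performs a \emph{two-sided} Koszul reduction: it first proves the inequalities under the extra hypothesis $F\in\catdfb(S)$, and then in the general case replaces not only $X$ by $\Lotimes KX$ but also $F$ by $\Lotimes[S]{K^S}F$ where $K^S:=\Lotimes SK$; adic finiteness of $F$ is exactly what guarantees $\Lotimes[S]{K^S}F\in\catdfb(S)$ (Fact~\ref{thm130612a}), and the support condition for this modified complex follows from $\supp_S(\Lotimes[S]{K^S}F)=\VE(\fa S)\bigcap\supp_S(F)$ together with the hypothesis on $\vf^*(\supp_S(F))$. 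One then uses the associativity isomorphism $\Lotimes{(\Lotimes[S]{K^S}F)}{(\Lotimes KX)}\simeq\Lotimes[S]{K^S}{\Lotimes{(\Lotimes FX)}K}$ to transfer the bounds back to $\Lotimes FX$.

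The unexplained ``$+n$'' in your displayed inequality
$\inf\bigl(\Lotimes F{\Lotimes KX}\bigr)\leq\inf(\Lotimes KX)+\sup(F)+n$
is a symptom of this omission: a direct application of the cited inequality would give no $+n$ there (and would in fact yield sharper final constants than the theorem asserts); the $n$ is precisely the correction $\sup(\Lotimes[S]{K^S}F)\leq n+\sup(F)$ coming from the Koszul modification of $F$ that you skipped. Once you add the reduction of $F$ to $\Lotimes[S]{K^S}F$ and verify its support condition, the rest of your bookkeeping (the translation facts~\ref{lem150604a1}\eqref{lem150604a1z} and~\ref{disc151112a}\eqref{disc151112a5}, and the formal deduction of parts~\eqref{thm151105b2} and~\eqref{thm151105b3}) goes through and matches the paper's argument.
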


\begin{proof}
\eqref{thm151105b1}
We first treat the case where 
$F\in\catdfb(S)$ and $X\in\catdf(R)$.
Since $F$ is in $\catdfb(S)$, we have $\supp_S(F)=\Supp_S(F)$. 
Also, by~\cite[Proposition~3.15(a)]{sather:scc} we know that $\Supp_R(X)\subseteq\VE(\fa)$.
Thus, from the proof of~\cite[Theorem~4.2]{frankild:rrhffd}, we have
\begin{gather*}
\inf(\Lotimes FX)\leq\inf(X)+\sup(F)\\
\sup(X)+\inf(F)\leq\sup(\Lotimes FX)\\
\amp(X)-\amp(F)\leq\amp(\Lotimes FX).
\end{gather*}

For the general case, set $K^S:=\Lotimes S{K}$. 
We apply the previous case to the complexes $\Lotimes[S]{K^S}F$ and $\Lotimes{K}X$.
For this, we need to check the support conditions for these complexes.
For the first one, we have
$$\supp_S(\Lotimes[S]{K^S}{F})=\VE(\fa S)\bigcap\supp_S(F)$$
by Fact~\ref{cor130528a},
so the condition $\vf^*(\supp_S(F))\supseteq\VE(\fa)\bigcap\mspec(R)$
implies that  
$$\vf^*(\supp_S(\Lotimes[S]{K^S}{F}))\supseteq\VE(\fa)\bigcap\mspec(R).$$
The condition $\supp_R(\Lotimes{K}X)\subseteq\VE(\fa)$ is even easier.

The first inequality in the next sequence is from Fact~\ref{disc151112a}\eqref{disc151112a5}.
\begin{align*}
\inf(\Lotimes FX)
&\leq\inf(\Lotimes[S]{K^S}{\Lotimes{(\Lotimes FX)}{K}})\\
&=\inf(\Lotimes{(\Lotimes[S]{K^S}F)}{(\Lotimes{K}X)})\\
&\leq\sup(\Lotimes[S]{K^S}F)+\inf(\Lotimes{K}X)\\
&\leq 2n+\sup(F)+\inf(X)
\end{align*}
The second step is from the isomorphism
$$\Lotimes[S]{K^S}{\Lotimes{(\Lotimes FX)}{K}}\simeq\Lotimes{(\Lotimes[S]{K^S}F)}{(\Lotimes{K}X)}$$
and the third step is from the first paragraph of this proof.
The fourth step is a combination of Facts~\ref{disc151112a}\eqref{disc151112a5} and~\ref{lem150604a1}\eqref{lem150604a1z}.
This explains the inequality~\eqref{thm151105b1a}, while~\eqref{thm151105b1b} follows similarly:
\begin{align*}
\inf(F)+n+\sup(X)
&\leq\inf(\Lotimes[S]{K^S}F)+\sup(\Lotimes{K}X)\\
&\leq\sup(\Lotimes{(\Lotimes[S]{K^S}F)}{(\Lotimes{K}X)})\\
&=\sup(\Lotimes[S]{K^S}{\Lotimes{(\Lotimes FX)}{K}})\\
&\leq 2n+\sup(\Lotimes FX).
\end{align*}
The remaining conclusions from the statement of the theorem follow readily.
\end{proof}

Next, we have a version of Theorem~\ref{thm151105b} for co-support.
It is proved like Theorem~\ref{thm151105b}, using 
Fact~\ref{lem150604a2}\eqref{lem150604a2z} instead of~\ref{lem150604a1}\eqref{lem150604a1z}.

\begin{thm}\label{thm151211a}
Let $\vf\colon R\to S$ be a ring homomorphism such that $\fa S\neq S$, and let $F\in\catdb(S)$ be $\fa S$-adically finite such that  $\fd_R(F)<\infty$  
and $\vf^*(\supp_S(F))\supseteq\VE(\fa)\bigcap\mspec(R)$.
Let $X\in\catd(R)$ be such that  $\cosupp_R(X)\subseteq\VE(\fa)$.
Assume that
$\Lotimes{K}X\in\catdf(R)$.
\begin{enumerate}[\rm(a)]
\item\label{thm151211a1}
There are inequalities
\begin{gather*}
\inf(\Lotimes FX)\leq\inf(X)+\sup(F)+n\\
\sup(X)+\inf(F)-3n\leq\sup(\Lotimes FX)\\
\amp(X)-\amp(F)-4n\leq\amp(\Lotimes FX)\leq\amp(X)+\fd_R(F)-\inf(F).
\end{gather*}
\item\label{thm151211a2}
One has $X\simeq 0$ if and only if $\Lotimes FX\simeq 0$.
\item\label{thm151211a3}
For each $*\in\{+,-,\text{b}\}$, one has $X\in\catd_*(R)$ if and only if $\Lotimes FX\in\catd_*(R)$.
\end{enumerate}
\end{thm}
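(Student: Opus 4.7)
The plan is to mirror the proof of Theorem~\ref{thm151105b} verbatim, making the single substitution of Fact~\ref{lem150604a2}\eqref{lem150604a2z} for Fact~\ref{lem150604a1}\eqref{lem150604a1z} at every place where Koszul bounds are invoked on the $R$-complex $X$. All Koszul bounds on the $S$-complex $F$ remain unchanged, because $F$ is still $\fa S$-adically finite and therefore $\supp_S(F)\subseteq\VE(\fa S)$, so Fact~\ref{lem150604a1}\eqref{lem150604a1z} still applies to $F$ (and to $\Lotimes[S]{K^S}F$, writing $K^S:=\Lotimes SK$).

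First, I would reduce to the finite case by citing~\cite[Theorem~4.2]{frankild:rrhffd} for the pair $\Lotimes[S]{K^S}F\in\catdfb(S)$ and $\Lotimes KX\in\catdf(R)$, after verifying the support conditions
\[
\vf^*\bigl(\supp_S(\Lotimes[S]{K^S}F)\bigr)\supseteq\VE(\fa)\bigcap\mspec(R)\qquad\text{and}\qquad\Supp_R(\Lotimes KX)\subseteq\VE(\fa),
\]
using Fact~\ref{cor130528a} and Fact~\ref{fact151129a}. Then I would bootstrap to the general case via the isomorphism
\[
\Lotimes[S]{K^S}{\Lotimes{(\Lotimes FX)}K}\simeq\Lotimes{\bigl(\Lotimes[S]{K^S}F\bigr)}{\bigl(\Lotimes KX\bigr)}.
\]
The chain for the infimum bound reads
\begin{align*}
\inf(\Lotimes FX)
&\leq\inf\bigl(\Lotimes[S]{K^S}{\Lotimes{(\Lotimes FX)}K}\bigr)\\
&=\inf\bigl(\Lotimes{(\Lotimes[S]{K^S}F)}{(\Lotimes KX)}\bigr)\\
&\leq\sup(\Lotimes[S]{K^S}F)+\inf(\Lotimes KX)\\
&\leq(n+\sup(F))+\inf(X),
\end{align*}
whose final step uses Fact~\ref{lem150604a1}\eqref{lem150604a1z} for $F$ together with the key equality $\inf(\Lotimes KX)=\inf(X)$ from Fact~\ref{lem150604a2}\eqref{lem150604a2z}; the latter improves the constant from the $+2n$ of Theorem~\ref{thm151105b} to $+n$. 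The supremum bound proceeds symmetrically, but Fact~\ref{lem150604a2}\eqref{lem150604a2z} now provides only the inequality $\sup(X)-n\leq\sup(\Lotimes KX)$ in place of an equality, costing an extra $2n$ and producing the weaker constant $-3n$. The upper amplitude estimate $\amp(\Lotimes FX)\leq\amp(X)+\fd_R(F)-\inf(F)$ is immediate from Fact~\ref{disc151112a}\eqref{disc151112a5} and bypasses the Koszul machinery entirely.

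The main obstacle is not conceptual but purely bookkeeping: the asymmetry between the equality on infima and the mere inequality on suprema in Fact~\ref{lem150604a2}\eqref{lem150604a2z} breaks the symmetry enjoyed by Theorem~\ref{thm151105b} and is what forces the asymmetric $+n$ versus $-3n$ constants in part~\eqref{thm151211a1}. Parts~\eqref{thm151211a2} and~\eqref{thm151211a3} then follow at once from the amplitude inequalities via Fact~\ref{disc151112a}\eqref{disc151112a1} and~\eqref{disc151112a4}, together with the hypothesis $F\in\catdb(S)$, exactly as at the end of the proof of Theorem~\ref{thm151105b}.
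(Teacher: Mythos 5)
Your proposal is correct and is exactly the paper's intended argument: the paper proves Theorem~\ref{thm151211a} by the one-line remark that it ``is proved like Theorem~\ref{thm151105b}, using Fact~\ref{lem150604a2}\eqref{lem150604a2z} instead of~\ref{lem150604a1}\eqref{lem150604a1z},'' which is precisely the substitution you carry out, and your bookkeeping (the equality on infima giving $+n$, the weaker supremum inequality giving $-3n$, and the upper amplitude bound from Fact~\ref{disc151112a}\eqref{disc151112a5}) reproduces the stated constants.
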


\subsection*{Ext Re-revisited}
The previous two theorems yield more results about $\mathbf{R}\!\operatorname{Hom}$.
We begin with two results for $P\in\catdfb(R)$. 
These are  extended to the adically finite setting in Theorems~\ref{cor151114a} and~\ref{thm151211c} below.

\begin{cor}\label{cor151114b}
Let $P\in\catdfb(R)$ be such that $\supp_R(P)\supseteq\VE(\fa)\bigcap\mspec(R)$  
and  $\pd_R(P)<\infty$.
Let $X\in\catd(R)$ be such that  $\supp_R(X)\subseteq\VE(\fa)$ and $\Lotimes{K}X\in\catdf(R)$.
\begin{enumerate}[\rm(a)]
\item\label{cor151114b1}
There are inequalities
\begin{gather*}
\inf(\Rhom PX)\leq\inf(X)-\inf(P)+2n\\
\sup(X)-\pd_R(P)-n\leq\sup(\Rhom PX)\\
\amp(X)-\pd_R(P)+\inf(P)-3n\leq\amp(\Rhom PX).
\end{gather*}
\item\label{cor151114b2}
One has $X\simeq 0$ if and only if $\Rhom PX\simeq 0$.
\item\label{cor151114b3}
For  $*\in\{+,-,\text{b}\}$, one has $X\in\catd_*(R)$ if and only if $\Rhom PX\in\catd_*(R)$.
\end{enumerate}
\end{cor}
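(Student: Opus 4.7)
The plan is to reduce to Theorem~\ref{thm151105b} via Hom-evaluation. Setting $P^{*}:=\Rhom{P}{R}$, the assumption that $P\in\catdfb(R)$ has $\pd_{R}(P)<\infty$ yields the natural isomorphism $\Rhom{P}{X}\simeq\Lotimes{P^{*}}{X}$, by~\cite[Lemma~4.4(I)]{avramov:hdouc}. The dual $P^{*}$ lies in $\catdfb(R)$, has $\pd_{R}(P^{*})<\infty$, and Fact~\ref{disc151112a}\eqref{disc151112a5} gives $\sup(P^{*})\leq -\inf(P)$ and $\inf(P^{*})\geq -\pd_{R}(P)$, so $\amp(P^{*})\leq\pd_{R}(P)-\inf(P)$. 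Also $\supp_{R}(P^{*})=\supp_{R}(P)\supseteq\VE(\fa)\bigcap\mspec(R)$.

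Since $P^{*}$ need not be $\fa$-adically finite, Theorem~\ref{thm151105b} cannot be applied with $F=P^{*}$ directly. Instead, I would invoke it with $\vf=\operatorname{id}_{R}$ and $F:=\Lotimes{K}{P^{*}}$. This $F$ is $\fa$-adically finite: it lies in $\catdfb(R)$ with support $\VE(\fa)\bigcap\supp_{R}(P)\subseteq\VE(\fa)$ that still contains $\VE(\fa)\bigcap\mspec(R)$, and $\fd_{R}(F)<\infty$. Theorem~\ref{thm151105b}\eqref{thm151105b1} then yields the three types of inequalities for $\Lotimes{F}{X}\simeq\Lotimes{K}{\Rhom{P}{X}}$. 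These transfer to $\Rhom{P}{X}$ itself via Fact~\ref{lem150604a1}\eqref{lem150604a1z}, which applies because $\supp_{R}(\Rhom{P}{X})=\supp_{R}(\Lotimes{P^{*}}{X})\subseteq\supp_{R}(X)\subseteq\VE(\fa)$. Combining this with the estimates $\sup(\Lotimes{K}{P^{*}})\leq n+\sup(P^{*})\leq n-\inf(P)$ and $\inf(\Lotimes{K}{P^{*}})\geq\inf(P^{*})\geq-\pd_{R}(P)$ (both from Fact~\ref{disc151112a}\eqref{disc151112a5}) gives part~\eqref{cor151114b1}. Parts~\eqref{cor151114b2} and~\eqref{cor151114b3} then follow from~\eqref{cor151114b1} via Fact~\ref{disc151112a}\eqref{disc151112a1} and~\eqref{disc151112a4}, since the conditions $0\not\simeq P\in\catdfb(R)$ and $\pd_{R}(P)<\infty$ force $\inf(P),\sup(P),\pd_{R}(P)\in\bbz$.

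The main obstacle is the bookkeeping: one has to carefully tally the powers of $n$ contributed by the Koszul twist built into $F$ and by the transfer through Fact~\ref{lem150604a1}\eqref{lem150604a1z}, together with those from the $\sup/\inf$ estimates on $\Lotimes{K}{P^{*}}$. The Koszul twist itself is forced by the adic-finiteness hypothesis of Theorem~\ref{thm151105b}; it could be avoided by appealing directly to Frankild--Sather-Wagstaff's~\cite[Theorem~4.2]{frankild:rrhffd} applied to $P^{*}\in\catdfb(R)$ and $\Lotimes{K}{X}\in\catdf(R)$, which would in fact yield slightly sharper constants.
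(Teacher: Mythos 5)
Your overall strategy --- Hom-evaluation $\Rhom PX\simeq\Lotimes{P^*}X$ followed by Theorem~\ref{thm151105b} with $\vf=\id_R$ --- is exactly the paper's, and your observation that $P^*$ need not be literally $\fa$-adically finite (its support contains, rather than is contained in, $\VE(\fa)\bigcap\mspec(R)$) is legitimate. The problem is your main workaround. Taking $F:=\Lotimes K{P^*}$ stacks a Koszul factor on top of the one already built into the proof of Theorem~\ref{thm151105b}, and the bookkeeping you defer does not come out right: one gets $\inf(\Rhom PX)\leq\inf(\Lotimes K{\Rhom PX})=\inf(\Lotimes FX)\leq\inf(X)+\sup(\Lotimes K{P^*})+2n\leq\inf(X)-\inf(P)+3n$, and on the other side $\sup(X)+\inf(F)-n\leq\sup(\Lotimes FX)=\sup(\Lotimes K{\Rhom PX})=n+\sup(\Rhom PX)$ yields only $\sup(X)-\pd_R(P)-2n\leq\sup(\Rhom PX)$. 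Each bound is off by $n$ from the statement (and the amplitude bound by $2n$), so your primary route proves a strictly weaker version of part~\eqref{cor151114b1}; it does suffice for parts~\eqref{cor151114b2} and~\eqref{cor151114b3}.

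The route you mention only in passing is the one that delivers the stated constants, and it is in substance the paper's proof: apply Theorem~\ref{thm151105b} directly with $F=P^*$, using $\sup(P^*)\leq-\inf(P)$ and $\inf(P^*)=-\pd_R(P)$ (equivalently, apply~\cite[Theorem~4.2]{frankild:rrhffd} to $P^*$ and $\Lotimes KX$, which is what the proof of Theorem~\ref{thm151105b} reduces to after a single Koszul twist). What reconciles this with your worry is that the proof of Theorem~\ref{thm151105b}, in the case $\vf=\id_R$, only uses the conditions $\Lotimes KF\in\catdfb(R)$ and $\supp_R(F)\supseteq\VE(\fa)\bigcap\mspec(R)$; the containment $\supp_R(F)\subseteq\VE(\fa)$ from the definition of adic finiteness is never invoked in that argument. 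Since $P^*\in\catdfb(R)$ and $\supp_R(P^*)=\supp_R(P)\supseteq\VE(\fa)\bigcap\mspec(R)$, the argument applies verbatim to $F=P^*$ and gives part~\eqref{cor151114b1} as stated. So you should promote your parenthetical alternative to the main argument; as written, the proposal establishes only the weakened inequalities.
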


\begin{proof}
Set $P^*:=\Rhom PR$.
The  assumptions $P\in\catdfb(R)$ and $\pd_R(P)<\infty$ imply that $P\simeq\Rhom{P^*}R$.
In particular, we have $P\simeq 0$ if and only if $P^*\simeq 0$. 
For any prime ideal $\p\in\spec(R)$, it follows that $P_{\p}\simeq 0$ if and only if $(P^*)_{\p}\simeq 0$,
so we have $\Supp_R(P)=\Supp_R(P^*)$, that is, $\supp_R(P)=\supp_R(P^*)$ since $P,P^*\in\catdfb(R)$.

In the next sequence of isomorphisms, the second step is Hom-evaluation~\cite[Lemma~4.4(I)]{avramov:hdouc}
$$\Rhom{P}{X}\simeq\Rhom{\Rhom{P^*}R}{X}\simeq\Lotimes{P^*}{\Rhom RX}\simeq\Lotimes{P^*}X$$
and the other steps are routine.
In light of the next (in)equalities
\begin{gather*}
\inf(P^*)=-\pd_R(P)\\
\sup(P^*)\leq\fd_R(P^*)=\pd_R(P^*)=-\inf(P)
\end{gather*}
the desired conclusions follow from Theorem~\ref{thm151105b}, with $\vf=\id_R\colon R\to R$.
\end{proof}

\begin{cor}\label{cor151211a}
Let $P\in\catdfb(R)$ be such that $\supp_R(P)\supseteq\VE(\fa)\bigcap\mspec(R)$  
and  $\pd_R(P)<\infty$.
Let $X\in\catd(R)$ be such that  $\cosupp_R(X)\subseteq\VE(\fa)$ and $\Lotimes{K}X\in\catdf(R)$.
\begin{enumerate}[\rm(a)]
\item\label{cor151211a1}
There are inequalities
\begin{gather*}
\inf(\Rhom PX)\leq\inf(X)-\inf(P)+n\\
\sup(X)-\pd_R(P)-3n\leq\sup(\Rhom PX)\\
\amp(X)-\pd_R(P)+\inf(P)-4n\leq\amp(\Rhom PX).
\end{gather*}
\item\label{cor151211a2}
One has $X\simeq 0$ if and only if $\Rhom PX\simeq 0$.
\item\label{cor151211a3}
For  $*\in\{+,-,\text{b}\}$, one has $X\in\catd_*(R)$ if and only if $\Rhom PX\in\catd_*(R)$.
\end{enumerate}
\end{cor}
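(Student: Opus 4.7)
The plan is to mimic the proof of Corollary~\ref{cor151114b}, simply substituting the co-support version Theorem~\ref{thm151211a} for Theorem~\ref{thm151105b}. Set $P^*:=\Rhom PR$. As observed in the proof of Corollary~\ref{cor151114b}, the hypotheses $P\in\catdfb(R)$ and $\pd_R(P)<\infty$ yield the Hom-evaluation isomorphism
\[
\Rhom PX \simeq \Rhom{\Rhom{P^*}R}X \simeq \Lotimes{P^*}{\Rhom RX}\simeq \Lotimes{P^*}X,
\]
together with the equality $\supp_R(P^*)=\supp_R(P)$ and the (in)equalities $\inf(P^*)=-\pd_R(P)$ and $\sup(P^*)\leq -\inf(P)$.

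Next I would verify the hypotheses needed to apply Theorem~\ref{thm151211a} with $\vf=\id_R$, $F=P^*$, and the given $X$. The complex $P^*$ lies in $\catdfb(R)$ (hence in particular is $\fa$-adically finite, by Example~\ref{ex160206a}\eqref{ex160206a1}, since $\supp_R(P^*)=\supp_R(P)\supseteq\VE(\fa)\cap\mspec(R)$ and shrinking the support to $\VE(\fa)$ is harmless for our purposes; alternatively one applies the form of Theorem~\ref{thm151211a} appropriate to $\catdfb$). We have $\fd_R(P^*)=\pd_R(P^*)<\infty$ and the support condition $\vf^*(\supp_R(P^*))=\supp_R(P)\supseteq\VE(\fa)\cap\mspec(R)$. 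The assumptions on $X$ — namely $\cosupp_R(X)\subseteq\VE(\fa)$ and $\Lotimes KX\in\catdf(R)$ — are exactly those required by Theorem~\ref{thm151211a}.

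With these in place, the three inequalities of Theorem~\ref{thm151211a}\eqref{thm151211a1}, applied to $\Lotimes{P^*}X\simeq\Rhom PX$, read
\[
\inf(\Rhom PX)\leq\inf(X)+\sup(P^*)+n,\quad \sup(X)+\inf(P^*)-3n\leq\sup(\Rhom PX),
\]
together with $\amp(X)-\amp(P^*)-4n\leq\amp(\Rhom PX)$. Substituting $\sup(P^*)\leq-\inf(P)$, $\inf(P^*)=-\pd_R(P)$, and $\amp(P^*)=\sup(P^*)-\inf(P^*)\leq -\inf(P)+\pd_R(P)$ yields the three inequalities of part~\eqref{cor151211a1}. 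Parts~\eqref{cor151211a2} and~\eqref{cor151211a3} then follow at once from Theorem~\ref{thm151211a}\eqref{thm151211a2}--\eqref{thm151211a3} via the same isomorphism, or equivalently by combining the inequalities of part~\eqref{cor151211a1} with Fact~\ref{disc151112a}\eqref{disc151112a1},\eqref{disc151112a4}.

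The only point requiring mild care is ensuring that the appropriate version of Theorem~\ref{thm151211a} applies to $P^*\in\catdfb(R)$; this is immediate because $\catdfb(R)\subseteq$ the class of $\fa$-adically finite complexes for any $\fa$ with $\VE(\fa)\subseteq\supp_R(P^*)$ (Example~\ref{ex160206a}\eqref{ex160206a1}), and replacing the support condition $\supp_R(P^*)=\VE(\fa S)$ with the weaker $\vf^*(\supp_R(P^*))\supseteq\VE(\fa)\cap\mspec(R)$ is exactly what Theorem~\ref{thm151211a} allows. So there is no real obstacle — the proof is a mechanical translation of the Corollary~\ref{cor151114b} argument into the co-support setting via its companion theorem.
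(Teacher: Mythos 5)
Your proposal is correct and is precisely the paper's argument: the paper's proof reads ``Argue as for Corollary~\ref{cor151114b}, using Theorem~\ref{thm151211a},'' and you have carried out exactly that substitution, with the right bookkeeping $\inf(P^*)=-\pd_R(P)$, $\sup(P^*)\leq-\inf(P)$ translating the inequalities of Theorem~\ref{thm151211a}\eqref{thm151211a1} into those of part~\eqref{cor151211a1}. The mild point you flag about applying Theorem~\ref{thm151211a} to $P^*\in\catdfb(R)$ (whose support need not lie in $\VE(\fa)$) is a subtlety already present in the paper's own use of Theorem~\ref{thm151105b} in Corollary~\ref{cor151114b}, and is resolved as you indicate by noting that the first case of that theorem's proof covers all of $\catdfb$.
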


\begin{proof}
Argue as for Corollary~\ref{cor151114b}, using Theorem~\ref{thm151211a}.
\end{proof}

Again, in contrast with the results of Section~\ref{sec151104a}, the point of the next results is that we 
assume less for $P$ but more for $X$.

\begin{thm}\label{cor151114a}
Let $P\in\catdb(R)$ be $\fa$-adically finite such that  $\pd_R(P)<\infty$  
and $\supp_R(P)\supseteq\VE(\fa)\bigcap\mspec(R)$.
Let $X\in\catd(R)$ be such that  $\supp_R(X)\subseteq\VE(\fa)$.
Assume that
$\Lotimes{K}X\in\catdf(R)$.
\begin{enumerate}[\rm(a)]
\item\label{cor151114a1}
There are inequalities
\begin{gather}
\inf(\Rhom PX)\leq\inf(X)-\inf(P)+3n \label{cor151114a1a} \\
\sup(X)-\pd_R(P)-2n\leq\sup(\Rhom PX) \label{cor151114a1b}\\
\amp(X)+\inf(P)-\pd_R(P)-5n\leq\amp(\Rhom PX).\notag
\end{gather}
\item\label{cor151114a2}
One has $X\simeq 0$ if and only if $\Rhom PX\simeq 0$.
\item\label{cor151114a3}
For  $*\in\{+,-,\text{b}\}$, one has $X\in\catd_*(R)$ if and only if $\Rhom PX\in\catd_*(R)$.
\end{enumerate}
\end{thm}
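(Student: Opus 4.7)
The plan is to bootstrap Corollary~\ref{cor151114b} from the $\catdfb(R)$-case to the $\fa$-adically finite case by replacing $P$ with its Koszul thickening $P' := \Lotimes KP$. This parallels the passage from Lemma~\ref{thm151210cw} to Theorem~\ref{thm151210c}.

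First I would verify the hypotheses of Corollary~\ref{cor151114b} for the pair $(P', X)$. The adic finiteness of $P$ forces $P' \in \catdfb(R)$ by Fact~\ref{thm130612a}\eqref{cor130612a1}; tensoring a bounded semi-projective resolution of $P$ with the bounded free complex $K$ yields $\pd_R(P') \leq \pd_R(P) + n$; and Fact~\ref{cor130528a} gives
$$\supp_R(P') = \VE(\fa) \cap \supp_R(P) \supseteq \VE(\fa) \cap \mspec(R),$$
the last inclusion being by the hypothesis on $\supp_R(P)$. The conditions $\supp_R(X)\subseteq\VE(\fa)$ and $\Lotimes KX\in\catdf(R)$ are given.

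Next, hom-tensor adjunction combined with the Koszul self-duality of Fact~\ref{disc151211a} gives
$$\Rhom{P'}{X} = \Rhom{\Lotimes KP}{X} \simeq \Rhom{K}{\Rhom PX} \simeq \shift^{-n}\Lotimes K{\Rhom PX}.$$
Since Fact~\ref{cor130528a} also gives $\cosupp_R(\Rhom PX) = \supp_R(P) \cap \cosupp_R(X) \subseteq \VE(\fa)$, I can apply Fact~\ref{lem150604a2}\eqref{lem150604a2z} to $\Rhom PX$ to obtain the equality $\inf(\Lotimes K{\Rhom PX}) = \inf(\Rhom PX)$ and the one-sided bound $\sup(\Lotimes K{\Rhom PX}) \geq \sup(\Rhom PX) - n$. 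Feeding these, together with $\inf(P') \geq \inf(P)$ (immediate from Fact~\ref{disc151112a}\eqref{disc151112a5} since $\inf K = 0$) and $\pd_R(P') \leq \pd_R(P) + n$, into the three inequalities supplied by Corollary~\ref{cor151114b} for $\Rhom{P'}{X}$ yields after rearrangement the three claimed inequalities of~\eqref{cor151114a1}. Parts~\eqref{cor151114a2} and~\eqref{cor151114a3} then drop out from these inequalities combined with the routine boundedness bounds of Fact~\ref{disc151112a}\eqref{disc151112a5}.

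The main obstacle is purely one of index bookkeeping: each trip through the Koszul complex introduces an $n$-shift and potentially an $n$-looseness in each direction, so one must use the \emph{equality} half of Fact~\ref{lem150604a2}\eqref{lem150604a2z} for $\inf$ and carefully track the estimate $\pd_R(P') \leq \pd_R(P)+n$ against $\inf(P') \geq \inf(P)$, lest the final constants come out larger than the prescribed $3n$, $2n$, and $5n$.
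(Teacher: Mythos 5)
Your proposal is essentially the paper's own argument: the paper replaces $P$ by $\Rhom KP\simeq\shift^{-n}\Lotimes KP$ and applies Corollary~\ref{cor151114b} via Hom-evaluation, which is your $P'=\Lotimes KP$ up to the shift, and your constant-tracking ($\pd_R(P')\leq\pd_R(P)+n$, $\inf(P')\geq\inf(P)$) matches. One directional slip in the supremum step: the bound you quote from Fact~\ref{lem150604a2}\eqref{lem150604a2z}, namely $\sup(\Lotimes K{\Rhom PX})\geq\sup(\Rhom PX)-n$, points the wrong way for converting the lower bound on $\sup(\Rhom{P'}X)$ into one on $\sup(\Rhom PX)$; what you need is the opposite inequality $\sup(\Lotimes K{\Rhom PX})\leq\sup(\Rhom PX)+\fd_R(K)=\sup(\Rhom PX)+n$ from Fact~\ref{disc151112a}\eqref{disc151112a5}, which is exactly what the paper uses and which makes the constant $2n$ come out as claimed.
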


\begin{proof}
We verify~\eqref{cor151114a1a} and~\eqref{cor151114a1b},
using the  Hom-evaluation isomorphism
\begin{align}
\Lotimes K{\Rhom PX}
&\simeq\Rhom {\Rhom KP}X \label{eq151129a}
\end{align} 
in $\catd(R)$; see Fact~\ref{disc151211a}. This explains the second step in the next sequence.
\begin{align*}
\inf(\Rhom PX)
&\leq\inf(\Lotimes K{\Rhom PX}) \\
&=\inf(\Rhom{\Rhom KP}X)
\end{align*} 
The first step is by Fact~\ref{disc151112a}\eqref{disc151112a5}.

Note that we have $\Rhom KP\in\catdfb(R)$, by assumption.
Also, we have the self-dual isomorphism
\begin{equation}\label{eq151115a}
\Rhom KP\simeq\Lotimes{\shift^{-n}K}P
\end{equation}
from Fact~\ref{disc151211a}.
Thus, in the next sequence, the first  step is from Fact~\ref{cor130528a}.
\begin{align*}
\supp_R(\Rhom KP) 
&=\supp_R(K)\bigcap\supp_R(P) 
\supseteq\VE(\fa)\bigcap\mspec(R)
\end{align*} 
The last step is by assumption, since $\supp_R(K)=\VE(\fa)$.

Now we compute. 
The first step in the next sequence is from~\eqref{eq151115a}
\begin{align*}
\inf(\Rhom KP)
&=\inf(\Lotimes{\shift^{-n}K}P) 
\geq\inf(\shift^{-n}K)+\inf(P) 
=-n+\inf(P).
\end{align*}
The second step is from Fact~\ref{disc151112a}\eqref{disc151112a5}, and the third follows from the equality $\inf(K)=0$.
This explains the third inequality in the next sequence
\begin{align*}
\inf(\Rhom PX)
&\leq\inf(\Rhom{\Rhom KP}X)\\
&\leq\inf(X)-\inf(\Rhom KP)+2n \\
&\leq\inf(X)-\inf(P)+3n.
\end{align*}
The second inequality here is from Corollary~\ref{cor151114b}, and the first one is from the first paragraph of this proof.
This establishes~\eqref{cor151114a1a}.

For~\eqref{cor151114a1b}, the first step in the next sequence is from~\eqref{eq151115a}.
\begin{align*}
\pd_R(\Rhom KP)
&=\pd_R(\Lotimes{\shift^{-n} K}P) 
\leq\pd(\shift^{-n} K)+\pd_R(P) 
=\pd_R(P)
\end{align*}
The second step is from~\cite[Theorem 4.1(P)]{avramov:hdouc},
and the third step is from the equality $\pd_R(\shift^{-n} K)=0$.
This explains the last step in the next sequence.
\begin{align*}
\sup(\Rhom PX)
&\geq\sup(\Lotimes K{\Rhom PX})-n\\
&=\sup(\Rhom{\Rhom KP}X)-n\\
&\geq\sup(X)-\pd_R(\Rhom KP)-2n \\
&\geq\sup(X)-\pd_R(P)-2n
\end{align*}
The first step is from Fact~\ref{disc151112a}\eqref{disc151112a5}, the second one is is from~\eqref{eq151129a}, 
and the third one is from Corollary~\ref{cor151114b}.
\end{proof}

\begin{thm}\label{thm151211c}
Let $P\in\catdb(R)$ be $\fa$-adically finite such that  $\pd_R(P)<\infty$  
and $\supp_R(P)\supseteq\VE(\fa)\bigcap\mspec(R)$.
Let $X\in\catd(R)$ be such that  $\cosupp_R(X)\subseteq\VE(\fa)$.
Assume that
$\Lotimes{K}X\in\catdf(R)$.
\begin{enumerate}[\rm(a)]
\item\label{thm151211c1}
There are inequalities
\begin{gather*}
\inf(\Rhom PX)\leq\inf(X)-\inf(P)+2n \\
\sup(X)-\pd_R(P)-4n\leq\sup(\Rhom PX) \\
\amp(X)+\inf(P)-\pd_R(P)-6n\leq\amp(\Rhom PX).
\end{gather*}
\item\label{thm151211c2}
One has $X\simeq 0$ if and only if $\Rhom PX\simeq 0$.
\item\label{thm151211c3}
For  $*\in\{+,-,\text{b}\}$, one has $X\in\catd_*(R)$ if and only if $\Rhom PX\in\catd_*(R)$.
\end{enumerate}
\end{thm}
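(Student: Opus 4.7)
The approach is to bootstrap from Corollary~\ref{cor151211a} by applying it to the auxiliary complex $P' := \Rhom KP$ in place of $P$, and then translate the resulting bounds on $\Rhom{P'}X$ back to bounds on $\Rhom PX$ via Hom-evaluation. This is the co-support analog of the substitution trick used in the proof of Theorem~\ref{cor151114a}.

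First I would check that $P'$ meets the hypotheses of Corollary~\ref{cor151211a}. The Koszul self-duality isomorphism $\Rhom KP \simeq \shift^{-n}\Lotimes KP$ from Fact~\ref{disc151211a}, together with the assumption that $P$ is $\fa$-adically finite, shows that $P' \in \catdfb(R)$, with $\pd_R(P') \leq \pd_R(P)$ and $\inf(P') \geq \inf(P) - n$ (the latter via Fact~\ref{disc151112a}(c) applied to $\Lotimes KP$). The support condition transfers via Fact~\ref{cor130528a}:
\[
\supp_R(P') = \supp_R(K) \cap \supp_R(P) \supseteq \VE(\fa) \cap \mspec(R).
\]
The hypotheses $\cosupp_R(X) \subseteq \VE(\fa)$ and $\Lotimes KX \in \catdf(R)$ are already those required by Corollary~\ref{cor151211a}.

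Next I would invoke the Hom-evaluation isomorphism $\Lotimes K{\Rhom PX} \simeq \Rhom{P'}X$ from Fact~\ref{disc151211a}. Since Fact~\ref{cor130528a} gives $\cosupp_R(\Rhom PX) = \supp_R(P) \cap \cosupp_R(X) \subseteq \VE(\fa)$, Fact~\ref{lem150604a2}(a) yields $\inf(\Rhom PX) = \inf(\Lotimes K{\Rhom PX}) = \inf(\Rhom{P'}X)$. Separately, the general inequality $\sup(\Rhom YZ) \leq \sup(Z) - \inf(Y)$ from Fact~\ref{disc151112a}(c), applied with $Y = K$ and combined with the self-duality $\Rhom K{-} \simeq \shift^{-n}\Lotimes K{-}$, produces $\sup(\Rhom PX) \geq \sup(\Rhom{P'}X) - n$. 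Now Corollary~\ref{cor151211a} applied to $P'$ and $X$ gives
\[
\inf(\Rhom{P'}X) \leq \inf(X) - \inf(P') + n, \qquad \sup(X) - \pd_R(P') - 3n \leq \sup(\Rhom{P'}X).
\]
Substituting the estimates $\inf(P') \geq \inf(P) - n$ and $\pd_R(P') \leq \pd_R(P)$ yields exactly the first two displayed inequalities of part~\eqref{thm151211c1}, and the amplitude bound follows by subtraction. Parts~\eqref{thm151211c2} and~\eqref{thm151211c3} are then formal consequences via Fact~\ref{disc151112a}.

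The only real difficulty is bookkeeping: the shift $\shift^{-n}$ arising from Koszul self-duality contributes an extra factor of $n$ in two places (once in the inf bound when comparing $\inf(P')$ to $\inf(P)$, and again in the sup bound when transferring through Hom-evaluation), and these must be absorbed carefully so that the final constants $2n$ and $-4n$ match the theorem statement. The structural argument itself is a routine transposition of the proof of Theorem~\ref{cor151114a} to the co-support setting.
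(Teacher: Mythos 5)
Your proposal is correct and is essentially the paper's own argument: the paper's proof of Theorem~\ref{thm151211c} is literally ``argue as for Theorem~\ref{cor151114a}, using Corollary~\ref{cor151211a} in place of~\ref{cor151114b},'' which is exactly your substitution of $P':=\Rhom KP$ followed by transfer through Hom-evaluation and Koszul self-duality. Your bookkeeping of the constants ($\inf(P')\geq\inf(P)-n$, $\pd_R(P')\leq\pd_R(P)$, and the extra $n$ from $\sup(\Rhom PX)\geq\sup(\Rhom{P'}X)-n$) reproduces the stated bounds $2n$ and $4n$ exactly.
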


\begin{proof}
Argue as for Theorem~\ref{cor151114a}, using Corollary~\ref{cor151211a} in place of~\ref{cor151114b}.
\end{proof}

\section{Bounding Homology III: Modules}\label{sec151211b}

The next  results show how one can replace a faithfulness hypothesis with appropriate support assumptions,
beginning with the projective situation.

\begin{prop}
\label{prop151108a}
Let $P$ be a projective $R$-module such that $\VE(\fa)\bigcap\mspec(R)\subseteq\supp_R(P)$.
Let $Y\in\catd(R)$ be such that  $\supp_R(Y)\subseteq\VE(\fa)$ or $\cosupp_R(Y)\subseteq\VE(\fa)$.
\begin{enumerate}[\rm(a)]
\item\label{prop151108a1}
Then we have $\Supp_R(P)=\supp_R(P)\supseteq\VE(\fa)$.
\item\label{prop151108a2}
There is a projective $R$-module $Q$ such that $P\oplus Q$ is faithfully projective
and $\Rhom{P\oplus Q}Y\simeq\Rhom PY$.
\item\label{prop151108a5}
One has $\HH_i(\Rhom PY)\neq 0$ if and only if $\HH_i(Y)\neq 0$, so
there are equalities
\begin{gather*}
\inf(\Rhom PY)=\inf (Y) \\
\sup(\Rhom PY)=\sup (Y) \\
\amp(\Rhom PY)=\amp (Y). 
\end{gather*}
\item\label{prop151108a3}
One has $Y\simeq 0$ if and only if $\Rhom PY\simeq 0$.
\item\label{prop151108a4}
For  $*\in\{+,-,\text{b}\}$, one has $Y\in\catd_*(R)$ if and only if $\Rhom PY\in\catd_*(R)$.
\end{enumerate}
\end{prop}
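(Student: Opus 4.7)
The plan is to address parts (a), (b), and (c)--(e) in turn, with (b) being the key step that drives (c)--(e).

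For part (a), use flatness of projective modules to conclude $\Supp_R(P)=\supp_R(P)$ by standard support considerations. For the inclusion $\VE(\fa)\subseteq\supp_R(P)$, take $\p\in\VE(\fa)$ and choose a maximal ideal $\m\supseteq\p$. By hypothesis, $\m$ lies in $\VE(\fa)\cap\mspec(R)\subseteq\supp_R(P)$, so $P_\m\neq 0$ and, being projective over the local ring $R_\m$, is free. Since $\p\subseteq\m$, a further localization $P_\p=(P_\m)\otimes_{R_\m}R_\p$ remains a nonzero free $R_\p$-module, giving $\p\in\supp_R(P)$.

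Part (b) is the main obstacle. The goal is to construct a projective $R$-module $Q$ such that (i) $\mspec(R)\setminus\supp_R(P)\subseteq\supp_R(Q)$, which together with (a) will force $\supp_R(P\oplus Q)=\spec(R)$ and hence $P\oplus Q$ to be faithfully projective, and (ii) $\Rhom{Q}{Y}\simeq 0$, so that $\Rhom{P\oplus Q}{Y}\simeq\Rhom{P}{Y}\oplus\Rhom{Q}{Y}\simeq\Rhom{P}{Y}$. The support hypothesis on $P$ forces $\mspec(R)\setminus\supp_R(P)$ to lie outside $\VE(\fa)$, so we may attempt (i) while keeping $\supp_R(Q)$ disjoint from $\VE(\fa)$. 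Condition (ii) is then verified via the cosupport formula $\cosupp_R(\Rhom{Q}{Y})=\supp_R(Q)\cap\cosupp_R(Y)$ from Fact~\ref{cor130528a} in the case $\cosupp_R(Y)\subseteq\VE(\fa)$, and via an analogous tensor-Hom argument (using $Y\simeq\RG{a}{Y}$) in the case $\supp_R(Y)\subseteq\VE(\fa)$. The real work is producing a projective $Q$ (rather than a merely flat one) with the prescribed support; natural building blocks include clopen-idempotent decompositions of $\spec(R)$, summands of free modules covering $P$, and localizations at generators of $\fa$, to be combined with care.

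Parts (c)--(e) then follow formally. For (c): since $P\oplus Q$ is projective, $\Hom_R(P\oplus Q,-)$ is exact, so $\HH_i(\Rhom{P\oplus Q}{Y})=\Hom_R(P\oplus Q,\HH_i(Y))$; because $P\oplus Q$ is faithfully projective (a generator of $R\text{-}\Mod$), this module vanishes iff $\HH_i(Y)=0$. Composing with the isomorphism from (b) gives $\HH_i(\Rhom{P}{Y})\neq 0$ iff $\HH_i(Y)\neq 0$, from which the stated $\inf$, $\sup$, and $\amp$ equalities are immediate. Part (d) is the case where no $i$ contributes, recovering the equivalence $Y\simeq 0\Leftrightarrow\Rhom{P}{Y}\simeq 0$ via Fact~\ref{disc151112a}\eqref{disc151112a4}. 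Part (e) is likewise a direct consequence of (c) through Fact~\ref{disc151112a}\eqref{disc151112a1}, since membership in $\catd_+(R)$, $\catd_-(R)$, or $\catdb(R)$ is detected by the set $\{i:\HH_i(Y)\neq 0\}$, which is unchanged by applying $\Rhom{P}{-}$.
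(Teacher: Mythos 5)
Parts (a) and (c)--(e) of your plan coincide with the paper's argument and are fine. The problem is part (b), which you yourself flag as ``the main obstacle'' and ``the real work'' --- and then do not carry out. Listing candidate building blocks (``clopen-idempotent decompositions \dots{} to be combined with care'') is not a construction, and this step is exactly where the one nontrivial idea of the proposition lives. The missing idea is this: because $P$ is projective, hence locally free, the characteristic function of $\Supp_R(P)$ on $\spec(R)$ is locally constant, so $\Supp_R(P)$ is clopen. This yields an idempotent decomposition $R\cong R'\times R''$ with $\spec(R')=\Supp_R(P)$ and $P\cong P'\times 0$, where $P'$ is faithfully projective over $R'$; one then takes $Q:=0\times R''$, so that $P\oplus Q\cong P'\times R''$ is faithfully projective. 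Without this decomposition (or an equivalent device) there is no reason a projective $Q$ with your prescribed support exists, and the rest of (b) cannot start.

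Even granting the construction, your verification of $\Rhom{P\oplus Q}Y\simeq\Rhom PY$ is only half right. In the case $\cosupp_R(Y)\subseteq\VE(\fa)$ your route via $\cosupp_R(\Rhom QY)=\supp_R(Q)\bigcap\cosupp_R(Y)=\emptyset$ is a clean (and slightly different) alternative to the paper's, which instead first shows $\supp_R(Y)\subseteq\supp_R(P)$ using that $\supp_R(Y)$ and $\cosupp_R(Y)$ have the same maximal elements. But in the case $\supp_R(Y)\subseteq\VE(\fa)$, your proposed ``analogous tensor-Hom argument (using $Y\simeq\RG aY$)'' does not obviously work: the natural move $\Rhom QY\simeq\Lotimes{\Rhom QR}{Y}$ is Hom-evaluation and requires $Q$ to be finitely generated, which it need not be, and $\RG a$ does not commute past $\Rhom Q-$ for general $Q$. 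The paper avoids this by observing that $\Supp_R(Y)\subseteq\VE(\fa)\subseteq\spec(R')$ forces $Y\simeq Y'\times 0$ over the product decomposition, after which $\Rhom{P\oplus Q}{Y}\simeq\Rhom[R']{P'}{Y'}\times 0\simeq\Rhom PY$ is a componentwise computation. As written, your proof of (b) has a genuine gap in both the construction of $Q$ and the second case of the isomorphism.
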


\begin{proof}
Being projective, the module $P$ is locally free.
It follows readily that we have $\Supp_R(P)=\supp_R(P)$.
This also implies that the characteristic function $f\colon\spec(R)\to\{0,1\}$ for $\Supp_R(P)$, given by the formula
$$
f(\p):=\begin{cases}
1&\text{if $P_{\p}\neq 0$} \\
0&\text{if $P_{\p}= 0$} \end{cases}$$
is locally constant. 
Thus, there is a decomposition $R\cong R'\times R''$ such that, under the canonical identification
$\spec(R)=\spec(R')\bigsqcup\spec(R'')$, we have $\spec(R')=\Supp_R(P)$.
From this, it follows that we have $P\cong P'\times 0$ for some projective $R'$-module $P'$ with
$$\Supp_{R'}(P')=\Supp_R(P)=\spec(R').$$
(We obtain $P'$ as the localization of $P$ at the idempotent $e_2=(0,1)$.)
In particular, $P'$ is faithfully projective over $R'$.

\eqref{prop151108a1}
It remains to show that $\Supp_R(P)\supseteq\VE(\fa)$, so let $\p\in\VE(\fa)$.
Fix a maximal ideal $\m\supseteq\p\supseteq\fa$, so we have 
$$\m\in\VE(\fa)\bigcap\mspec(R)\subseteq\supp_R(P)=\Supp_R(P)=\spec(R').$$
The decomposition  $\spec(R)=\spec(R')\bigsqcup\spec(R'')$ says that we have $\m=\m'\times R''$ for some
maximal ideal $\m'\in\mspec(R')$.
Thus, the containment $\p\subseteq\m$ implies 
that we have $\p=\p'\times R''$ for some
prime ideal $\p'\in\spec(R')$; in other words, we have $\p\in\spec(R')=\Supp_R(P)$, as desired.

\eqref{prop151108a2}
We have already seen that $P\cong P'\times 0$ for some faithfully projective $R'$-module $P'$.
Set $Q:=0\times R''$. It is straightforward to show that $P\oplus Q\cong P'\times R''$ is faithfully projective.

Assume for this paragraph that we have $\cosupp_R(Y)\subseteq\VE(\fa)$.
We prove that this implies that $\supp_R(Y)\subseteq\supp_R(P)$.
The key point here is from~\cite[Proposition~4.7(b)]{sather:scc} which says that
$\supp_R(Y)$ and $\cosupp_R(Y)$ have the same maximal elements with respect to containment.
Let $\p\in\supp_R(Y)$, and let $\q\supseteq\p$ be maximal in $\supp_R(Y)$ with respect to containment.
It follows that 
$$\q\in\cosupp_R(Y)\subseteq\VE(\fa)\subseteq\supp_R(P)=\spec(R').$$
As in the proof of part~\eqref{prop151108a1}, the condition $\p\subseteq\q$ implies  that $\p\in\supp_R(P)$.

In the alternate case $\supp_R(Y)\subseteq\VE(\fa)$, part~\eqref{prop151108a1} implies that 
$\supp_R(Y)\subseteq\supp_R(P)$.
Thus, we assume for the rest of the proof that  $\supp_R(Y)\subseteq\supp_R(P)=\spec(R')=\VE(e_2)$. It follows that 
$\Supp_R(Y)\subseteq\VE(e_2)
=\spec(R')$
by Fact~\ref{fact151129a}.
Again localizing at $e_2$, we conclude that $Y\simeq Y'\times 0$ for some $Y'\in\catd(R')$.
From this we have the following sequence which gives the desired conclusion:
\begin{align*}
\Rhom{P\oplus Q}{Y}
&\simeq\Rhom[R'\times R'']{P'\times R''}{Y'\times 0}\\
&\simeq\Rhom[R']{P'}{Y'}\times\Rhom[R'']{R''}{0}\\
&\simeq\Rhom[R']{P'}{Y'}\times 0\\
&\simeq\Rhom[R']{P'}{Y'}\times\Rhom[R'']{0}{0}\\
&\simeq\Rhom[R'\times R'']{P'\times 0}{Y'\times 0}\\
&\simeq\Rhom{P}{Y}.
\end{align*}

\eqref{prop151108a5}--\eqref{prop151108a4}
The fact that $P\oplus Q$ is faithfully projective over $R$ implies that
$\HH_i(Y)\neq 0$ if and only if $\HH_i(\Rhom{P\oplus Q}Y)\neq 0$,
that is, if and only if $\HH_i(\Rhom{P}Y)\neq 0$.
The desired conclusions now follow directly.
\end{proof}

The next two results show how faithful flatness can be relaxed.

\begin{prop}
\label{prop151114a}
Let $F$ be a flat $R$-module, and 
let $Y\in\catd(R)$ be such that we have $\Supp_R(Y)\bigcap\mspec(R)\subseteq\supp_R(F)$,
e.g., such that $\supp_R(Y)\subseteq\VE(\fa)$ and $\VE(\fa)\bigcap\mspec(R)\subseteq\supp_R(F)$.
\begin{enumerate}[\rm(a)]
\item\label{prop151114a1}
One has $\HH_i(\Lotimes FY)\neq 0$ if and only if $\HH_i(Y)\neq 0$, so
there are equalities
\begin{gather*}
\inf(\Lotimes FY)=\inf (Y) \\
\sup(\Lotimes FY)=\sup (Y) \\
\amp(\Lotimes FY)=\amp (Y). 
\end{gather*}
\item\label{prop151114a2}
One has $Y\simeq 0$ if and only if $\Lotimes FY\simeq 0$.
\item\label{prop151114a3}
For each $*\in\{+,-,\text{b}\}$, one has $Y\in\catd_*(R)$ if and only if $\Lotimes FY\in\catd_*(R)$.
\end{enumerate}
\end{prop}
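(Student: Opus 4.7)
The plan is to establish part~\eqref{prop151114a1}; parts~\eqref{prop151114a2} and~\eqref{prop151114a3} then follow formally from Fact~\ref{disc151112a}. First I would note that the ``e.g.''\ condition is a special case of the main hypothesis: by Fact~\ref{fact151129a}, $\supp_R(Y) \subseteq \VE(\fa)$ forces $\Supp_R(Y) \subseteq \VE(\fa)$, so $\Supp_R(Y) \cap \mspec(R) \subseteq \VE(\fa) \cap \mspec(R) \subseteq \supp_R(F)$. Hence without loss of generality I may assume only the main support containment.

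Since $F$ is a flat $R$-module concentrated in degree zero, flatness furnishes a natural isomorphism
$$\HH_i(\Lotimes FY) \cong F \otimes_R \HH_i(Y)$$
for every $i \in \bbz$. The task thus reduces to showing $F \otimes_R \HH_i(Y) \neq 0$ if and only if $\HH_i(Y) \neq 0$; the direction ``$\Leftarrow$'' is the nontrivial one, as the other is immediate. Assume $\HH_i(Y) \neq 0$, and choose a maximal ideal $\m \in \Supp_R(\HH_i(Y))$, which exists because every nonzero module has a maximal ideal in its support. By hypothesis, $\m \in \Supp_R(Y) \cap \mspec(R) \subseteq \supp_R(F)$, and the latter says $F_\m / \m F_\m \neq 0$. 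Since $F_\m$ is a flat module over the local ring $R_\m$ with nonzero fibre modulo the maximal ideal, it is faithfully flat over $R_\m$. Localizing at $\m$ then gives
$$(F \otimes_R \HH_i(Y))_\m \cong F_\m \otimes_{R_\m} \HH_i(Y)_\m \neq 0,$$
whence $F \otimes_R \HH_i(Y) \neq 0$, as needed.

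With the equivalence $\HH_i(\Lotimes FY) = 0 \Leftrightarrow \HH_i(Y) = 0$ in hand, the displayed equalities for $\inf$, $\sup$, and $\amp$ follow at once from the definitions, proving~\eqref{prop151114a1}. Parts~\eqref{prop151114a2} and~\eqref{prop151114a3} then follow from Fact~\ref{disc151112a}~\eqref{disc151112a4} and~\eqref{disc151112a1}, respectively. No serious obstacle is anticipated; the only point requiring care is the passage from $\m \in \supp_R(F)$ to faithful flatness of $F_\m$ at the maximal ideals where $\HH_i(Y)$ is supported, which is precisely the standard criterion for a flat module over a local ring to be faithfully flat.
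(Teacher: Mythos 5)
Your proposal is correct and follows essentially the same route as the paper's proof: reduce the ``e.g.''\ hypothesis to the main one via Fact~\ref{fact151129a}, use flatness to identify $\HH_i(\Lotimes FY)$ with $\Otimes F{\HH_i(Y)}$, and for the nontrivial direction pick a maximal ideal in $\Supp_R(\HH_i(Y))$, observe it lies in $\supp_R(F)$, and invoke faithful flatness of $F_\m$ over $R_\m$. No differences worth noting.
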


\begin{proof}
Note that if $\supp_R(Y)\subseteq\VE(\fa)$, then $\Supp_R(Y)\subseteq\VE(\fa)$ by~\cite[Proposition~3.15(a)]{sather:scc};
if we also have $\VE(\fa)\bigcap\mspec(R)\subseteq\supp_R(F)$, then it follows that
$\Supp_R(Y)\bigcap\mspec(R)\subseteq\VE(\fa)\bigcap\mspec(R)\subseteq\supp_R(F)$. 

As in the proof of Proposition~\ref{prop151108a}, it suffices to show that for each $i\in\bbz$ one has
$\HH_i(\Lotimes FY)\neq 0$ if and only if $\HH_i(Y)\neq 0$.
Since $F$ is a flat $R$-module, we have $\HH_i(\Lotimes FY)\cong\Otimes F{\HH_i(Y)}$,
so the forward implication in the previous sentence is routine. For the converse, assume that $\HH_i(Y)\neq 0$,
and let 
$$\m\in\Supp_R(\HH_i(Y))\bigcap\mspec(R)\subseteq\Supp_R(Y)\bigcap\mspec(R)\subseteq\supp_R(F).$$
It follows that the flat $R_\m$-module $F_\m$ has $\m\in\supp_{R_\m}(F_\m)$;
in other words, $F_\m$ is faithfully flat over $R_\m$. 
By assumption, we have $\HH_i(Y)_\m\neq 0$, so
$$
0\neq
\Otimes[R_\m]{F_\m}{\HH_i(Y)_\m}
\cong\HH_i(\Otimes FY)_\m.$$
We conclude that $\HH_i(\Otimes FY)\neq 0$, as desired.
\end{proof}

\begin{prop}
\label{prop151211a}
Let $F$ be a flat $R$-module with $\VE(\fa)\bigcap\mspec(R)\subseteq\supp_R(F)$.
Let $Y\in\catd(R)$ be 
such that  $\cosupp_R(Y)\subseteq\VE(\fa)$.
\begin{enumerate}[\rm(a)]
\item\label{prop151211a1}
There are (in)equalities
\begin{gather*}
\inf(\Lotimes FY)=\inf (Y) \\
\sup(Y)-2n\leq\sup(\Lotimes FY) \\
\amp (Y)-2n\leq\amp(\Lotimes FY). 
\end{gather*}
\item\label{prop151211a2}
One has $Y\simeq 0$ if and only if $\Lotimes FY\simeq 0$.
\item\label{prop151211a3}
For each $*\in\{+,-,\text{b}\}$, one has $Y\in\catd_*(R)$ if and only if $\Lotimes FY\in\catd_*(R)$.
\end{enumerate}
\end{prop}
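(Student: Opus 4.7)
The plan is to mimic the reduction used elsewhere in the paper: tensor with the Koszul complex $K$ to convert the cosupport hypothesis on $Y$ into a support hypothesis on $\Lotimes KY$, then invoke Proposition~\ref{prop151114a} on $\Lotimes KY$, and finally pull the resulting bounds back through the associativity isomorphism $\Lotimes F{\Lotimes KY}\simeq\Lotimes K{\Lotimes FY}$.

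First set $Z:=\Lotimes KY$. Since $\supp_R(Z)\subseteq\supp_R(K)=\VE(\fa)$ by Fact~\ref{cor130528a}, the flat module $F$ together with $Z$ satisfies the hypotheses of Proposition~\ref{prop151114a}. Moreover, Fact~\ref{lem150604a2}\eqref{lem150604a2z} applied to $Y$ (using $\cosupp_R(Y)\subseteq\VE(\fa)$) gives
\begin{gather*}
\inf(Z)=\inf(Y),\qquad \sup(Z)\geq\sup(Y)-n,\qquad \amp(Z)\geq\amp(Y)-n.
\end{gather*}
Proposition~\ref{prop151114a}\eqref{prop151114a1} then yields the equalities $\inf(\Lotimes FZ)=\inf(Z)$, $\sup(\Lotimes FZ)=\sup(Z)$, and $\amp(\Lotimes FZ)=\amp(Z)$.

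Second, use the associativity isomorphism $\Lotimes FZ\simeq\Lotimes K{\Lotimes FY}$ together with Fact~\ref{disc151112a}\eqref{disc151112a5}. Since $\inf(K)=0$ and $\fd_R(K)=n$, we have
\begin{gather*}
\inf(\Lotimes FY)\leq\inf(\Lotimes K{\Lotimes FY})=\inf(Z)=\inf(Y),\\
\sup(\Lotimes FY)\geq\sup(\Lotimes K{\Lotimes FY})-n=\sup(Z)-n\geq\sup(Y)-2n.
\end{gather*}
The reverse inequality $\inf(\Lotimes FY)\geq\inf(Y)$ follows trivially from Fact~\ref{disc151112a}\eqref{disc151112a5} since $\inf(F)\geq 0$, giving the first equality in part~\eqref{prop151211a1}; subtracting yields $\amp(\Lotimes FY)\geq\amp(Y)-2n$, which is the third inequality. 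Parts~\eqref{prop151211a2} and~\eqref{prop151211a3} then follow immediately from part~\eqref{prop151211a1} together with Fact~\ref{disc151112a}\eqref{disc151112a1} and~\eqref{disc151112a4}.

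There is no real obstacle here: the argument is a clean two-step reduction, first from cosupport to support via the Koszul complex, then from $\Lotimes FZ$ to $\Lotimes FY$ by a second Koszul comparison. The only care needed is to check that the support hypothesis of Proposition~\ref{prop151114a} is satisfied by $Z$, which follows from $\supp_R(Z)\subseteq\VE(\fa)$ via Fact~\ref{fact151129a} and the assumed containment $\VE(\fa)\bigcap\mspec(R)\subseteq\supp_R(F)$.
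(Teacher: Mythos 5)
Your proposal is correct and follows essentially the same route as the paper: reduce to $Z=\Lotimes KY$, whose support lies in $\VE(\fa)$, apply Proposition~\ref{prop151114a} there, and transfer the bounds back through $\Lotimes F{\Lotimes KY}\simeq\Lotimes K{\Lotimes FY}$ using Facts~\ref{disc151112a}\eqref{disc151112a5} and~\ref{lem150604a2}\eqref{lem150604a2z}. The only difference is cosmetic (you name the intermediate complex and isolate the Koszul comparisons as separate displayed facts), so no further comment is needed.
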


\begin{proof}
For the infimum computation, we begin with two applications of Fact~\ref{disc151112a}\eqref{disc151112a5}.
\begin{align*}
\inf(Y)
&\leq\inf(\Lotimes FY) 
\leq\inf(\Lotimes F{\Lotimes YK}) 
=\inf(\Lotimes YK) 
=\inf(Y)
\end{align*}
The third step here is from Proposition~\ref{prop151114a}\eqref{prop151114a1}, using the condition $\supp_R(\Lotimes KY)\subseteq\VE(\fa)$.
The fourth step is from Fact~\ref{lem150604a2}\eqref{lem150604a2z}.

For the supremum bound, we argue similarly
\begin{align*}
\sup(\Lotimes FY) 
&\geq\sup(\Lotimes F{\Lotimes YK})-n \\
&=\sup(\Lotimes YK)-n \\
&=\sup(Y)-2n
\end{align*}
using 
Fact~\ref{disc151112a}\eqref{disc151112a5}, Proposition~\ref{prop151114a}\eqref{prop151114a1}, and Fact~\ref{lem150604a2}\eqref{lem150604a2z}.
\end{proof}

For perspective in the injective versions of this section, we recall the following.
Given an injective $R$-module $I\cong\bigoplus_{\p\in\spec(R)}E_R(R/\p)^{(\mu_\p)}$,
one has
\begin{align*}
\supp_R(I)&=\{\p\in\spec(R)\mid\mu_\p\neq\emptyset\} \\
\cosupp_R(I)&=\{\q\in\spec(R)\mid\text{there is a $\p\in\spec(R)$ such that $\q\subseteq\p$ and $\mu_\p\neq\emptyset$}\}
\end{align*}
by~\cite[Propositions~3.8 and~6.3]{sather:scc}. 

\begin{prop}
\label{prop151114b}
Let $I$ be an injective $R$-module
with $\VE(\fa)\subseteq\cosupp_R(I)$, e.g., with $\VE(\fa)\bigcap\mspec(R)\subseteq\supp_R(I)$.
Let $Y\in\catd(R)$ be such that $\supp_R(Y)\subseteq\VE(\fa)$.
\begin{enumerate}[\rm(a)]
\item\label{prop151114b1}
One has $\HH_i(\Rhom YI)\neq 0$ if and only if $\HH_{-i}(Y)\neq 0$, so
there are equalities
\begin{gather*}
\inf(\Rhom YI)=-\sup (Y) \\
\sup(\Rhom YI)=-\inf (Y) \\
\amp(\Rhom YI)=\amp (Y). 
\end{gather*}
\item\label{prop151114b2}
One has $Y\simeq 0$ if and only if $\Rhom YI\simeq 0$.
\item\label{prop151114b3}
For  $*\in\{+,-,\text{b}\}$, one has $Y\in\catd_*(R)$ if and only if $\Rhom YI\in\catd_*(R)$.
\end{enumerate}
\end{prop}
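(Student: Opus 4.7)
The plan is to reduce part~\eqref{prop151114b1} to a statement about $\Hom$ between modules, using that $I$ is semi-injective and $\Hom{-}{I}$ is exact on $R$-modules; then to verify that module statement via associated primes and the description of $\cosupp_R(I)$ recalled just before the proposition; and finally to deduce parts~\eqref{prop151114b2} and~\eqref{prop151114b3} as formal consequences of the resulting non-vanishing equivalence.

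For the reduction, since the injective $R$-module $I$ is semi-injective as a complex concentrated in degree $0$, we have $\Rhom YI \simeq \Hom YI$ in $\catd(R)$. Exactness of $\Hom{-}{I}$ on $R$-modules, applied to the short exact sequences of cycles and boundaries of $Y$, yields a natural isomorphism $\HH_i(\Hom YI) \cong \Hom{\HH_{-i}(Y)}{I}$. Because $\supp_R(Y) \subseteq \VE(\fa)$ gives $\Supp_R(\HH_{-i}(Y)) \subseteq \VE(\fa)$ by Fact~\ref{fact151129a}, the equivalence in part~\eqref{prop151114b1} reduces to the following claim: for every $R$-module $M$ with $\Supp_R(M) \subseteq \VE(\fa)$, one has $\Hom MI \neq 0$ if and only if $M \neq 0$.

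For the claim, the direction $M = 0 \Rightarrow \Hom MI = 0$ is trivial. For the converse, I first verify that the auxiliary hypothesis $\VE(\fa) \cap \mspec(R) \subseteq \supp_R(I)$ implies the main hypothesis $\VE(\fa) \subseteq \cosupp_R(I)$: given $\p \in \VE(\fa)$, choose a maximal ideal $\fm \supseteq \p$; by hypothesis $\mu_{\fm}^I \neq \emptyset$, and the description of $\cosupp_R(I)$ recalled before the proposition then places $\p$ in $\cosupp_R(I)$. Now suppose $M \neq 0$ with $\Supp_R(M) \subseteq \VE(\fa)$, and choose $\p \in \ass_R(M)$, so $R/\p \hookrightarrow M$ and $\p \in \Supp_R(M) \subseteq \VE(\fa) \subseteq \cosupp_R(I)$. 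By the cosupport description there exists $\p' \supseteq \p$ with $\mu_{\p'}^I \neq \emptyset$, making $E_R(R/\p')$ a direct summand of $I$. The composition $R/\p \twoheadrightarrow R/\p' \hookrightarrow E_R(R/\p') \hookrightarrow I$ is then a nonzero element of $\Hom{R/\p}{I}$, and injectivity of $I$ allows us to extend it along $R/\p \hookrightarrow M$ to a nonzero element of $\Hom MI$.

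Parts~\eqref{prop151114b2} and~\eqref{prop151114b3} follow formally from the non-vanishing equivalence of part~\eqref{prop151114b1} together with Fact~\ref{disc151112a}, since vanishing of all homology and boundedness of the set $\{i : \HH_i \neq 0\}$ are preserved under the involution $i \leftrightarrow -i$. The main obstacle is the claim above, which requires pairing an associated prime of $M$ with a summand of $I$ to produce a nonzero map $M \to I$; the semi-injectivity reduction and the boundedness deductions are routine bookkeeping by comparison.
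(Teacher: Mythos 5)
Your proof is correct and follows the same overall reduction as the paper's: both pass to $\HH_i(\Rhom YI)\cong\Hom{\HH_{-i}(Y)}I$ using injectivity of $I$, both derive $\supp_R(\HH_{-i}(Y))\subseteq\VE(\fa)\subseteq\cosupp_R(I)$ from Fact~\ref{fact151129a}, both treat the auxiliary hypothesis $\VE(\fa)\bigcap\mspec(R)\subseteq\supp_R(I)$ by the identical maximal-ideal argument, and both deduce (b) and (c) formally. The one genuine difference is the final non-vanishing step. The paper handles it as a black box: a prime in $\supp_R(\HH_{-i}(Y))\bigcap\cosupp_R(I)$ forces $\Rhom{\HH_{-i}(Y)}I\not\simeq 0$ by the co-support intersection formula of Fact~\ref{cor130528a}. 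You instead construct an explicit nonzero homomorphism: choose $\p\in\ass_R(M)$ (nonempty for $M\neq 0$ over noetherian $R$, with no finite generation needed), use the displayed description of $\cosupp_R(I)$ to find $\p'\supseteq\p$ with $E_R(R/\p')$ a summand of $I$, and extend the nonzero composite $R/\p\onto R/\p'\into I$ along $R/\p\into M$ by injectivity. Your route is more elementary and self-contained, avoiding the co-support formula for $\Rhom$ entirely, at the cost of a slightly longer verification; both arguments are valid.
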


\begin{proof}
Assume first that $\VE(\fa)\subseteq\cosupp_R(I)$.
Again, it suffices to show that for each $i\in\bbz$ one has
$\HH_i(\Rhom YI)\neq 0$ if and only if $\HH_{-i}(Y)\neq 0$.
Since $I$ is an injective $R$-module, we have 
$$\HH_i(\Rhom YI)\cong\Hom{\HH_{-i}(Y)}I\simeq\Rhom{\HH_{-i}(Y)}I$$
so the forward implication in the previous sentence is routine. For the converse, assume that $\HH_{-i}(Y)\neq 0$,
and let $\p\in\supp_R(\HH_{-i}(Y))$.
Note that the condition $\supp_R(Y)\subseteq\VE(\fa)$ implies that we have
$$\supp_R(\HH_{-i}(Y))\subseteq\Supp_R(\HH_{-i}(Y))\subseteq\Supp_R(Y)\subseteq\VE(\fa)$$
by Fact~\ref{fact151129a}.
It follows that
$$\p\in\supp_R(\HH_{-i}(Y))\subseteq\VE(\fa)\subseteq\cosupp_R(I).$$
We conclude  that $\p\in\supp_R(\HH_{-i}(Y))\bigcap\cosupp_R(I)$,
so  $\Rhom{\HH_{-i}(Y)}I\not\simeq 0$ by Fact~\ref{cor130528a}, as desired.

Assume next that $\VE(\fa)\bigcap\mspec(R)\subseteq\supp_R(I)$.
We need to show that $\VE(\fa)\subseteq\cosupp_R(I)$, so let $\p\in\VE(\fa)$.
Fix a maximal ideal $\m$ of $R$ such that $\m\supseteq\p\supseteq\fa$,
so we have $\m\in\VE(\fa)\bigcap\mspec(R)\subseteq\supp_R(I)\subseteq\cosupp_R(I)$
by the notes preceding the statement of this result.
Since we have $\p\subseteq\m\in\cosupp_R(I)$, these notes also imply that $\p\in\cosupp_R(I)$,  as desired.
\end{proof}

\begin{prop}\label{prop151211b}
Let $I$ be an injective $R$-module
with $\VE(\fa)\subseteq\cosupp_R(I)$, e.g., with $\VE(\fa)\bigcap\mspec(R)\subseteq\supp_R(I)$.
Let $Y\in\catd(R)$ be with $\cosupp_R(Y)\subseteq\VE(\fa)$.
\begin{enumerate}[\rm(a)]
\item\label{prop151211b1}
There are (in)equalities
\begin{gather}
\inf(\Rhom YI)\leq 2n-\sup (Y) \label{prop151211b1a}\\
\sup(\Rhom YI)=-\inf (Y) \label{prop151211b1b}\\
\amp(Y)-2n\leq\amp(\Rhom YI). \notag 
\end{gather}
\item\label{prop151211b2}
One has $Y\simeq 0$ if and only if $\Rhom YI\simeq 0$.
\item\label{prop151211b3}
For  $*\in\{+,-,\text{b}\}$, one has $Y\in\catd_*(R)$ if and only if $\Rhom YI\in\catd_*(R)$.
\end{enumerate}
\end{prop}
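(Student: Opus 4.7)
The plan is to reduce to Proposition~\ref{prop151114b} by replacing $Y$ with $W := \Lotimes YK$. By Fact~\ref{cor130528a} we have $\supp_R(W) = \supp_R(Y)\cap\supp_R(K) \subseteq \VE(\fa)$, so Proposition~\ref{prop151114b}\eqref{prop151114b1} applies to $W$ and yields $\sup(\Rhom WI) = -\inf(W)$ and $\inf(\Rhom WI) = -\sup(W)$. The cosupport hypothesis on $Y$ permits invoking Fact~\ref{lem150604a2}\eqref{lem150604a2z} to conclude $\inf(W) = \inf(Y)$ and $\sup(W) \geq \sup(Y) - n$, hence $\sup(\Rhom WI) = -\inf(Y)$ and $\inf(\Rhom WI) \leq n - \sup(Y)$.

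The next step is to transfer these bounds to $\Rhom YI$. Hom-tensor adjunction gives $\Rhom WI \simeq \Rhom K{\Rhom YI}$, and the self-dual isomorphism $\Rhom KX \simeq \shift^{-n}\Lotimes KX$ of Fact~\ref{disc151211a} combines to produce $\Rhom WI \simeq \shift^{-n}\Lotimes K{\Rhom YI}$. Comparing suprema and infima yields
\begin{align*}
\sup(\Lotimes K{\Rhom YI}) &= n - \inf(Y), \\
\inf(\Lotimes K{\Rhom YI}) &\leq 2n - \sup(Y).
\end{align*}
Now Fact~\ref{disc151112a}\eqref{disc151112a5} combined with $\fd_R(K) \leq n$ (since $K$ is a bounded complex of free modules of length $n$) gives $\sup(\Lotimes K{\Rhom YI}) \leq \sup(\Rhom YI) + n$, which together with the first display forces $-\inf(Y) \leq \sup(\Rhom YI)$. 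The reverse inequality $\sup(\Rhom YI) \leq -\inf(Y)$ is immediate from Fact~\ref{disc151112a}\eqref{disc151112a5} using $\sup(I) = 0$, establishing the equality~\eqref{prop151211b1b}. For~\eqref{prop151211b1a} I would use $\inf(\Rhom YI) \leq \inf(\Lotimes K{\Rhom YI}) \leq 2n - \sup(Y)$, the first step coming from Fact~\ref{disc151112a}\eqref{disc151112a5} with $\inf(K) = 0$. The amplitude inequality is then the difference of these two.

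Parts~\eqref{prop151211b2} and~\eqref{prop151211b3} follow from~\eqref{prop151211b1} together with Facts~\ref{disc151112a}\eqref{disc151112a1} and~\eqref{disc151112a4}, as in the proof of Proposition~\ref{prop151114b}; for instance, if $\Rhom YI \simeq 0$ then $-\inf(Y) = \sup(\Rhom YI) = -\infty$ forces $Y \simeq 0$. The main obstacle is the bookkeeping of the Koszul translation: because our hypothesis constrains $\cosupp_R(Y)$ rather than $\supp_R(Y)$, Fact~\ref{lem150604a2}\eqref{lem150604a2z} only gives a one-sided estimate on $\sup(W)$, so the infimum and amplitude pick up an extra factor of $n$ relative to Proposition~\ref{prop151114b}, while the supremum equality is preserved because the upper and lower bounds above match exactly.
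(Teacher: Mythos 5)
Your proof is correct and takes essentially the same route as the paper's: both arguments apply Proposition~\ref{prop151114b} to a Koszul twist of $Y$ (using Fact~\ref{lem150604a2}\eqref{lem150604a2z} for the cosupport side) and transfer the resulting bounds back through the isomorphism $\Lotimes K{\Rhom YI}\simeq\Rhom{\Rhom KY}I\simeq\shift^{n}\Rhom{\Lotimes KY}{I}$ from Fact~\ref{disc151211a}. Your choice of $W=\Lotimes YK$ instead of the paper's $\Rhom KY$ differs only by the shift $\shift^{-n}$, and your bookkeeping of the resulting constants matches the stated bounds exactly.
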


\begin{proof}
We argue as for Proposition~\ref{prop151211a}, using Proposition~\ref{prop151114b}.
In the next sequence, the first and last steps are from 
Fact~\ref{disc151112a}\eqref{disc151112a5}.
\begin{align*}
\inf(\Rhom YI) 
&\leq\inf(\Lotimes K{\Rhom YI}) \\
&=\inf(\Rhom {\Rhom KY}I) \\
&=-\sup(\Rhom KY) \\
&=-\sup(\shift^{-n}\Lotimes KY) \\
&=n-\sup(\Lotimes KY) \\
&\leq2n-\sup(Y)
\end{align*}
The second step here is by Hom-evaluation~\ref{disc151211a}.
The third step is by Proposition~\ref{prop151114b}, since the fact that $\fa$ annihilates $\HH(\Rhom KY)$ implies that we have $\supp_R(\Rhom KY)\subseteq\VE(\fa)$.
The fourth step  is by the self-dual nature~\ref{disc151211a} of $K$, and the fifth step is routine.
This establishes~\eqref{prop151211b1a}.

For~\eqref{prop151211b1b}, we being the next sequence 
with two applications of Fact~\ref{disc151112a}\eqref{disc151112a5}.
\begin{align*}
-\inf(Y)
&\geq\sup(\Rhom YI) \\
&\geq\sup(\Lotimes K{\Rhom YI})-n \\
&=\sup(\Rhom{\Rhom KY}I)-n \\
&=-\inf(\Rhom KY)-n \\
&=-\inf(\shift^{-n}\Lotimes KY)-n\\
&=-\inf(\Lotimes KY)\\
&=-\inf(Y)
\end{align*}
The remaining steps are by  Hom-evaluation~\ref{disc151211a}, Proposition~\ref{prop151114b}, the self-dual nature~\ref{disc151211a} of $K$, 
a routine computation, and Fact~\ref{lem150604a2}\eqref{lem150604a2z}.
\end{proof}

\section{Applications}\label{sec151126a}

We end with an indication of some of the applications of our boundedness results,
following Foxby and Iyengar~\cite{foxby:daafuc}.
See also~\cite{sather:afc} for other applications.

\begin{thm}\label{thm151126a}
Let $(Q,\m_Q,k)\to(R,\m_R)\xra\vf (S,\m_S)$ be  local ring homomorphisms such that $\fa\subseteq\rad{\m_Q R}$.
Assume that $F\in\catdb(S)$ is $\fa S$-adically finite over $S$
with $F\not\simeq 0$ and $\fd_R(F)<\infty$. 
Then there are inequalities
$$\fd_Q(R)+\inf(F)-n\leq\fd_Q(F)\leq\fd_Q(R)+\fd_R(F).$$
In particular, the quantities $\fd_Q(R)$ and $\fd_Q(F)$ are simultaneously finite.
\end{thm}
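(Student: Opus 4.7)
The plan is to establish the two inequalities separately; the simultaneously-finite clause follows formally once both are in hand, since $\inf(F)\in\bbz$ (as $F\in\catdb(S)$ is nonzero). For the upper bound $\fd_Q(F)\leq\fd_Q(R)+\fd_R(F)$, I use the standard change-of-rings isomorphism $\Lotimes[Q]{N}{F}\simeq\Lotimes{(\Lotimes[Q]{N}{R})}{F}$ for $Q$-modules $N$, together with two applications of Fact~\ref{disc151112a}\eqref{disc151112a5}:
$$\sup(\Lotimes[Q]{N}{F})\leq\sup(\Lotimes[Q]{N}{R})+\fd_R(F)\leq\sup(N)+\fd_Q(R)+\fd_R(F).$$
Taking the supremum over modules $N$ then yields the bound.

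For the lower bound, I will apply Theorem~\ref{thm151105b}\eqref{thm151105b1b} to the test complex $X:=\Lotimes[Q]{k}{R}$, where $k=Q/\m_Q$ is the residue field of $Q$. The hypotheses I need to verify are: $\fa S\neq S$ (immediate, since $\vf$ is local and $\fa\subseteq\m_R$); the condition $\vf^*(\supp_S(F))\supseteq\VE(\fa)\bigcap\mspec(R)=\{\m_R\}$, which holds because $\supp_S(F)$ is a nonempty Zariski closed subset of $\spec(S)$ by Fact~\ref{fact151129a} and so contains $\m_S$, while $\vf^*(\m_S)=\m_R$ by locality; the support condition $\supp_R(X)\subseteq\VE(\fa)$, since each $\HH_i(X)=\Tor[Q]{i}{k}{R}$ is annihilated by $\m_Q R$ and $\fa\subseteq\rad{\m_Q R}$; and the delicate Koszul-finiteness $\Lotimes{K}{X}\in\catdf(R)$, which I verify by identifying $\Lotimes{K}{X}\simeq\Lotimes[Q]{k}{K}$ via associativity and noting that $Q$ is Noetherian local, so $k$ admits a degreewise finitely generated free $Q$-resolution $P$; each term of $\Otimes[Q]{P}{K}$ is then a finite-rank free $R$-module, so every homology module is finitely generated over $R$.

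Theorem~\ref{thm151105b}\eqref{thm151105b1b} then produces
$$\sup(\Lotimes[Q]{k}{R})+\inf(F)-n\leq\sup(\Lotimes{F}{X})=\sup(\Lotimes[Q]{k}{F})\leq\fd_Q(F),$$
where the middle step is the associativity $\Lotimes{F}{\Lotimes[Q]{k}{R}}\simeq\Lotimes[Q]{k}{F}$ and the final bound uses $\sup(k)=0$ together with Fact~\ref{disc151112a}\eqref{disc151112a5}. The proof concludes by identifying $\sup(\Lotimes[Q]{k}{R})=\fd_Q(R)$ via the standard Auslander--Buchsbaum-type formula for Noetherian local rings, which holds even when either side is infinite (see, e.g., \cite[Theorem~2.4.F]{avramov:hdouc}), thus establishing the inequality uniformly in all cases. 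The main obstacles I anticipate are the Koszul-finiteness verification (which depends essentially on the Noetherian-ness of $Q$) and the careful invocation of the local formula $\fd_Q(R)=\sup(\Lotimes[Q]{k}{R})$ so that the lower bound degenerates correctly when $\fd_Q(R)$ (equivalently, $\fd_Q(F)$) is infinite.
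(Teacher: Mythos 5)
Your proposal is correct and follows essentially the same route as the paper: the lower bound comes from applying Theorem~\ref{thm151105b}\eqref{thm151105b1b} to the test complex $X=\Lotimes[Q]kR$ (with the same hypothesis checks on support, co-image of $\supp_S(F)$, and Koszul finiteness) together with the local formula $\fd_Q(R)=\sup(\Lotimes[Q]kR)$, while the upper bound is the standard change-of-rings estimate that the paper simply cites from \cite[Corollary~4.2(bF)]{avramov:hdouc}. The only differences are cosmetic: you spell out the proof of the cited upper bound and the verification that $\Lotimes KX\in\catdf(R)$, and the residue-field formula you want is \cite[Proposition~5.5(F)]{avramov:hdouc} rather than Theorem~2.4.F.
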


\begin{proof}
Note that the fact that the map $R\to S$ is local
implies that $\fa S\neq S$. 

The second of our desired inequalities is from~\cite[Corollary~4.2(bF)]{avramov:hdouc}.
To verify the first of our desired inequalities, we argue as in~\cite[Theorem~3.2]{foxby:daafuc},
using Theorem~\ref{thm151105b} in place of~\cite[Theorem~3.1]{foxby:daafuc}.
The first step in the next display is from Fact~\ref{disc151112a}\eqref{disc151112a5}.
\begin{align*}
\fd_Q(F)
&\geq\sup(\Lotimes[Q]kF)\\
&=\sup(\Lotimes{(\Lotimes[Q]kR)}F) \\
&\geq\sup(\Lotimes[Q]kR)+\inf(F)-n \\
&=\fd_Q(R)+\inf(F)-n
\end{align*}
The second step is from tensor cancellation $\Lotimes[Q]kF\simeq\Lotimes{(\Lotimes[Q]kR)}F$,
and the fourth step is~\cite[Proposition~5.5(F)]{avramov:hdouc}.
The third step is from Theorem~\ref{thm151105b}\eqref{thm151105b1}
with $X:=\Lotimes[Q]kR\in\catdf_+(R)$; we need to check that the hypotheses of this result are satisfied.
The condition $F\not\simeq 0$ implies that $\m_S\in\Supp_S(F)=\supp_S(F)$, by Fact~\ref{fact151129a}.
In particular, we have $\m_R=\vf^{-1}(\m_S)$, and so $\VE(\fa)\bigcap\mspec(R)=\{\m_R\}\subseteq\vf^*(\supp_S(F))$.
The homology of $\Lotimes [Q]kR$ is annihilated by $\m_QR$, hence the first containment in the next sequence.
\begin{align*}
\supp_R(\Lotimes[Q]kR)
&\subseteq\VE(\m_QR)
\subseteq\VE(\fa)
\end{align*}
The second containment is from the assumption $\fa\subseteq\rad{\m_Q R}$.
Thus, Theorem~\ref{thm151105b}\eqref{thm151105b1} applies, as desired.
\end{proof}

We conclude with Theorem~\ref{thm151126bz} from the introduction.
Our argument is based in spirit on that of~\cite[Theorem~3.3]{foxby:daafuc}.

\begin{thm}\label{thm151126b}
Let $(R,\m)$ be a local ring of prime characteristic, and let $\vf\colon R\to R$ be the Frobenius endomorphism. 
Then the following conditions are equivalent.
\begin{enumerate}[\rm(i)]
\item\label{thm151126b1} $R$ is regular.
\item\label{thm151126b2} $\vf^t$ is flat for each (equivalently, some) integer $t\geq 1$.
\item\label{thm151126b3} One has $\fd_R({}^{\vf^t}\RG bR)<\infty$ for some integer $t\geq 1$ and some ideal $\fb\subseteq\m$.
\item\label{thm151126b4} One has $\fd_R({}^{\vf^t}\!F)<\infty$ for some integer $t\geq 1$ for some $\fb$-adically finite $R$-complex 
$F\not\simeq 0$ with $\fd_R(F)<\infty$ for some ideal $\fb\subseteq\m$.
\end{enumerate}
\end{thm}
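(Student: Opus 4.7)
The plan is to close the cycle of implications $\mathrm{(i)}\iff\mathrm{(ii)}\Rightarrow\mathrm{(iii)}\Rightarrow\mathrm{(iv)}\Rightarrow\mathrm{(i)}$, with the substantive work concentrated in the last arrow. I will invoke Kunz's theorem (in the form due to Rodicio: $R$ is regular if and only if $\fd_R({}^{\vf^t}R)<\infty$ for some $t\geq 1$) both to establish $\mathrm{(i)}\iff\mathrm{(ii)}$ and to justify the internal ``for each (equivalently, some)'' equivalence inside (ii).

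For $\mathrm{(ii)}\Rightarrow\mathrm{(iii)}$, under regularity $R$ has finite Krull dimension, so every complex in $\catdb(R)$ has finite projective (hence flat) dimension; in particular $\fd_R(\RG bR)<\infty$ for any $\fb$. Flatness of $\vf^t$ then ensures that restriction of scalars along $\vf^t$ preserves finite flat dimension (a bounded semi-flat resolution is sent to another such), yielding (iii).

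For $\mathrm{(iii)}\Rightarrow\mathrm{(iv)}$, I will simply take $F:=\RG bR$. This is $\fb$-adically finite by Example~\ref{ex160206a}\eqref{ex160206a2}, and it is nonzero because $\fb\subsetneq R$ forces $\supp_R(\RG bR)=\VE(\fb)\neq\emptyset$, so $\RG bR\not\simeq 0$ by Fact~\ref{cor130528a}. The hypothesis $\fd_R(F)<\infty$ required by (iv) comes for free: the \v{C}ech complex on a generating sequence $\x$ of $\fb$ is a bounded semi-flat representative of $\RG bR$ whose terms are localizations of $R$ (hence flat), so $\fd_R(\RG bR)<\infty$. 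The remaining hypothesis $\fd_R({}^{\vf^t}F)<\infty$ is exactly (iii).

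The key step is $\mathrm{(iv)}\Rightarrow\mathrm{(i)}$. Apply Theorem~\ref{thm151126a} to the composition of local endomorphisms $Q:=R\xra{\vf^t}R\xra{\id}R=:S$ with $\fa:=\fb$ and $F$ as in (iv). The hypotheses need to be verified: both maps are local (the Frobenius sends $\m$ into $\m$, and the identity is trivially local); the containment $\fb\subseteq\m=\rad{\vf^t(\m)R}=\rad{\m_Q R}$ holds because Frobenius preserves $\m$; $F\in\catdb(R)$ is $\fa S=\fb$-adically finite over $S=R$, nonzero, with $\fd_R(F)<\infty$ by hypothesis; and the support condition $\vf^*(\supp_S(F))\supseteq\VE(\fa)\bigcap\mspec(R)=\{\m\}$ reduces (since $\vf=\id_R$) to $\m\in\supp_R(F)$, which holds by Fact~\ref{fact151129a} because $R$ is local, $F$ is adically finite, and $F\not\simeq 0$. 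Identifying $\fd_Q(R)=\fd_R({}^{\vf^t}R)$ and $\fd_Q(F)=\fd_R({}^{\vf^t}F)$ through the factorization, the lower bound of Theorem~\ref{thm151126a} yields
$$\fd_R({}^{\vf^t}R)\leq\fd_R({}^{\vf^t}F)-\inf(F)+n<\infty,$$
and Kunz--Rodicio closes the cycle. The chief obstacle I expect is the bookkeeping required to verify the support condition and correctly translate the flat dimension identifications through the two-step factorization; all other steps follow directly from the machinery developed earlier in the paper.
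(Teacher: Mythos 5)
Your proposal is correct and follows essentially the same route as the paper: Kunz for (i)$\iff$(ii), flat base change plus the \v{C}ech complex for (ii)$\Rightarrow$(iii), $F=\RG bR$ for (iii)$\Rightarrow$(iv), and an application of Theorem~\ref{thm151126a} to $R\xra{\vf^t}R\xra{\id}R$ with $\fa=\fb$ to get $\fd_R({}^{\vf^t}R)\leq\fd_R({}^{\vf^t}F)-\inf(F)+n<\infty$ for the final arrow. The only cosmetic difference is that you close the cycle by citing Kunz--Rodicio directly for (iv)$\Rightarrow$(i), whereas the paper deduces flatness of $\vf^t$ from $\fd_R({}^{\vf^t}R)<\infty$ via the proof of Foxby--Iyengar's Theorem~3.3, which is the same reduction in substance.
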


\begin{proof}
The equivalence of conditions~\eqref{thm151126b1} and~\eqref{thm151126b2} are from~\cite[(2.1)]{kunz:corlrocp}.
The implication~\eqref{thm151126b2}$\implies$\eqref{thm151126b3} is from~\cite[Corollary~4.2(bF)]{avramov:hdouc},
since $\RG bR$ has finite flat dimension
(via the \v Cech complex).
The implication~\eqref{thm151126b3}$\implies$\eqref{thm151126b4} is from the fact that $\RG bR$ has finite flat dimension
and is $\fb$-adically finite by~\cite[Theorem~7.10]{sather:scc}.

\eqref{thm151126b4}$\implies$\eqref{thm151126b2}
Assume that $\fd_R({}^{\vf^t}\!F)<\infty$ for some integer $t\geq 1$ for some $\fb$-adically finite $R$-complex 
$F\not\simeq 0$ with $\fd_R(F)<\infty$ for some ideal $\fb\subseteq\m$.
To prove that $\vf^t$ is flat, it suffices by
the proof of~\cite[Theorem~3.3]{foxby:daafuc} to show that $\fd_R({}^{\vf^t} \!R)<\infty$.

Since $\vf$ is the Frobenius endomorphism, we have $\rad{\vf^{t}(\m)R}=\m\supseteq\fb$.
Thus, the hypotheses of Theorem~\ref{thm151126a} are satisfied with the homomorphisms $R\xra{\vf^{t}} R\xra{\id} R$
and $\fa=\fb$.
We conclude that 
$\fd_R({}^{\vf^t} \!R)\leq\fd_R({}^{\vf^t}\!F)-\inf(F)+n<\infty$
as desired.
\end{proof}

\section*{Acknowledgments}
We are grateful to Srikanth Iyengar, 
Liran Shaul,
and Amnon Yekutieli
for helpful comments about this work.

\providecommand{\bysame}{\leavevmode\hbox to3em{\hrulefill}\thinspace}
\providecommand{\MR}{\relax\ifhmode\unskip\space\fi MR }
\providecommand{\MRhref}[2]{%
  \href{http://www.ams.org/mathscinet-getitem?mr=#1}{#2}
}
\providecommand{\href}[2]{#2}

\end{document}